\newcommand{\CM}{Cohen-Macaulay}
\newcommand{\eF}{\EuScript{F}}
\newcommand{\wrt}{with respect to}
\newcommand{\n}{\mathfrak{n} }
\newcommand{\m}{\mathfrak{m} }
\newcommand{\tf}{\mathfrak{t} }
\newcommand{\M}{\mathfrak{M} }
\newcommand{\N}{\mathfrak{N} }
\newcommand{\q}{\mathfrak{q} }
\newcommand{\A}{\mathfrak{a} }
\newcommand{\B}{\mathfrak{b} }
\newcommand{\C}{\mathfrak{c} }
\newcommand{\R}{\mathcal{R} }
\newcommand{\Z}{\mathbb{Z} }
\newcommand{\nZ}{n \in \mathbb{Z} }
\newcommand{\Ee}{\EuScript{E}}
\newcommand{\GA}{G_{\mathfrak{a}}(A) }
\newcommand{\ra}{\EuScript{R}_{\mathfrak{a}}(A) }
\newcommand{\GF}{G_{\EuScript{F}}(A) }
\newcommand{\tF}{\EuScript{S}_{\EuScript{F}}(A)}
\newcommand{\ral}{\EuScript{R}_{\mathfrak{a}^l}(A) }
\newcommand{\ta}{\EuScript{S}_{\mathfrak{a}}(A) }
\newcommand{\eR}{\EuScript{R}}
\newcommand{\xb}{\mathbf{x}}
\newcommand{\eG}{\EuScript{G}}
\newcommand{\eE}{\EuScript{E}}
\newcommand{\eH}{\EuScript{H}}
\newcommand{\Sc}{\mathcal{S} }
\newcommand{\rt}{\rightarrow}
\newcommand{\xar}{\longrightarrow}
\newcommand{\ov}{\overline}
\newcommand{\Bcal}{\mathcal{B} }
\newcommand{\wh}{\widehat }
\newcommand{\image}{\operatorname{image}}
\newcommand{\grade}{\operatorname{grade}}
\newcommand{\depth}{\operatorname{depth}}
\newcommand{\ann}{\operatorname{ann}}
\newcommand{\red}{\operatorname{red}}
\newcommand{\htt}{\operatorname{ht}}
\newcommand{\embdim}{\operatorname{embdim}}
\newcommand{\Hs}{\operatorname{\ ^* Hom}}
\newcommand{\Hom}{\operatorname{Hom}}
\newcommand{\Ext}{\operatorname{Ext}}
\theoremstyle{plain}
\newtheorem{theorem}{Theorem}[section]
\newtheorem{corollary}[theorem]{Corollary}
\newtheorem{lemma}[theorem]{Lemma}
\theoremstyle{definition}
\newtheorem{remark}[theorem]{Remark}
\theoremstyle{remark}
\numberwithin{equation}{theorem}
\begin{document}

\title[Itoh's conjecture]{A solution to Itoh's conjecture for integral closure filtration }
\author{Tony~J.~Puthenpurakal}
\date{\today}
\address{Department of Mathematics, Indian Institute of Technology, Bombay, Powai, Mumbai 400 076, India}

\email{tputhen@math.iitb.ac.in}
\subjclass{Primary  13A30,  13D45 ; Secondary 13H10, 13H15}
\keywords{multiplicity,  reduction, Hilbert polynomial, associated graded rings}
  \begin{abstract}
Let $(A,\m)$ be an analytically unramified  \CM \ local ring of dimension $d \geq 3$ and let $\A$ be an $\m$-primary ideal in $A$. If $I$ is an ideal in $A$ then let $I^*$ be the integral closure of $I$ in $A$. Let $\GA^*  = \bigoplus_{n\geq 0 }(\A^n)^*/(\A^{n+1})^*$ be the associated graded ring of the integral closure filtration of $\A$. Itoh conjectured  in 1992 that if third Hilbert coefficient of $\GA^*$, i.e.,  $e_3^{\A^*}(A) = 0$ and $A$ is Gorenstein then $\GA^*$ is \CM. In this paper we prove  Itoh's conjecture (more generally for analytically unramified \CM \ local rings).
  \end{abstract}
 \maketitle

\section{Introduction}
Let $(A,\m)$ be a Noetherian    local ring of dimension $d$ and let $\A$ be  an $\m$-primary ideal in $A$. We consider multiplicative $\A$-stable filtrations i.e.,
$\eF = \{ \A_n \}_{n\geq 0}$ such that
\begin{enumerate}[\rm (i)]
  \item $\A_n$ are ideals in $A$ for all $n \geq 0$ with  $\A_0 = A$.
  \item $\A \subseteq \A_1$ and $\A_1 \neq A$.
  \item $\A_n \A_m \subseteq \A_{n + m}$ for all $n, m \geq 0$.
  \item $\A \A_n = \A_{n + 1}$ for all $n \gg 0$.
\end{enumerate}
Let $\ta = \bigoplus_{n\geq 0} \A^n$ be the Rees ring of $A$ \wrt \ $\A$. Let $\tF =  \bigoplus_{n\geq 0} \A_n$ be the Rees ring of $A$ \wrt \ $\eF$. We have an inclusion of rings $\ta \hookrightarrow \tF$ and $\tF$ is a finite $\ta$-module.

If $I$ is an ideal in $A$ then set $I^*$ to be the integral closure of $A$. If $\eF = \{ \A_n \}_{n \geq 0}$ is a multiplicative $\A$-stable filtration on $A$ then note that $\A_n \subseteq \A_n^*$ for all $n \geq 1$.

\s \emph{Some examples of $\A$-stable filtrations:}\\
1. The $\A$-adic filtration, i.e., $\A_n = \A^n$ for all $n \geq 1$ is clearly an multiplicative $\A$-stable filtration. \\
2. Let $A$ be analytically unramified (i.e., the completion $\wh{A}$ is reduced). Then the integral closure filtration with respect to $\A$, i.e., $\A_n = (\A^n)^*$ is  a multiplicative $\A$-stable filtration, see \cite{Rees} (also see \cite[9.1.2]{Hun-Sw}). \\
3. Let $A$ be analytically unramified. By an asymptotic integral closure filtration  with respect to $\A$,  we mean a multiplicative $\A$-stable filtration $\eF = \{ \A_n \}_{n \geq 0}$ such that $\A_n = (\A^n)^*$ for all $n \gg 0$. Asymptotic integral closure filtration behave much better than the integral closure filtration when we go mod a superfical element (after going to a general extension of $A$), see
\ref{AtoA'}(d).

Let $\GA = \bigoplus_{n \geq 0} \A^n/\A^{n+1}$ be the associated graded ring of $A$ \wrt \ $\A$. Let $\eF = \{ \A_n \}_{n \geq 0}$ be a multiplicative $\A$-stable filtration. Consider
$\GF = \bigoplus_{n \geq 0} \A_n/\A_{n+1}$ be the associated graded ring of $A$ \wrt \ $\eF$. Let $\ell(M)$ denote the length of an $A$-module $M$.  Clearly $\GF $ is a finite $\GA$-module. It follows the function
$H_{\eF}(n) = \ell(A/\A_{n +1})$  is of polynomial type, i.e., there exists a polynomial $P_\eF(X) \in \mathbb{Q}[X]$ of degree $d$ such that $P_{\eF}(n) = H_{\eF}(n)$ for all $n \gg 0$. We write
$$P_\eF(X) = e_0^\eF(A)\binom{X+d}{d} -  e_1^\eF(A)\binom{X+d -1}{d -1} + \cdots + (-1)^d e_d^\eF(A), $$
where $e_i^\eF(A)$ are integers.
If $\eF$ is the $\A$-adic filtration then we set $e_i^\A(-)= e^\eF_i(-)$. Also if $\eF$ is the $\A$-integral closure filtration then we set ${e_i^\A}^*(-)= e^\eF_i(-)$.
Clearly $e_0^\eF(A) = e_0^\A(A) > 0$, for instance see \cite[11.4]{AM}.

\emph{For the rest of the introduction assume $A$ is \CM.}\\
Hilbert coefficients of \CM \ local rings satisfy many inequalities. It was J.Sally who first investigated when the border values are reached. She showed that  then $\GA$ has good properties (she mostly studied the case when
$\A = \m$). For instance Abhyankar proved that
$e_0^\m \geq \embdim A - \dim A + 1$, see \cite{A}.
Sally showed that if equality is attained (the so-called rings of minimal multiplicity) then $G_\m(A)$ is \CM, see \cite{Sa}. Northcott showed that $e_1^\A(A) \geq 0$, see \cite[Theorem 1]{North}. Furthermore if $k$ is infinite then $e_1^\A(A) = 0$ if and only if $\A$ is a parameter ideal, see \cite[Theorem 2]{North}.
Narita, see \cite{Nar},  showed that $e_2^\A(A) \geq 0$ (also see \cite[Theorem 3.1]{RV}).  Furthermore if $k$ is infinite and $d = 2$ then $e_2^\A(A) = 0$ if and only if reduction number of $\A^n$ is one for all $n \gg 0$, see \cite{Nar} (also see \cite[Proposition 3.2]{RV}). In particular $G_{\A^n}(A)$ is \CM \ for all $n \gg 0$. Narita, see \cite{Nar}, also gave an example of a ring with $e_3^\A(A) < 0$ (also see \cite[Example 2, p.\ 8]{Mar}). For a nice survey of this area see the article \cite{V}.

The integral closure filtration of  $\A$ is $\{ (\A^n)^* \}_{n \geq 0}$.  The integral closure filtration of $\m$-primary ideals  has inspired plenty of research, see for instance \cite[Appendix 5]{ZS}, \cite{Rees-2}, \cite{Hun}, \cite{It}, \cite{ItN}, \cite{Masuti-1}, \cite{Masuti-2}
\cite{G}, \cite{GHM}, \cite{CPR} and \cite{Ver}.

Itoh proved that ${e_3^\A}^*(A) \geq 0$, see \cite[Theorem 3]{ItN}. He conjectured that if $A$ is Gorenstein and
${e_3^\A}^*(A) = 0$ then $\GA^* = \bigoplus_{n \geq 0} (\A^n)^*/(\A^{n+1})^*$ is \CM. He proved the result when
${\A}^* = \m$, see \cite[Theorem 3]{ItN} (also see \cite[3.3]{CPR}). See \cite{KM} for a geometric view-point of Itoh's conjecture.

An ideal $\A$ is said to be normal if $\A^n$ is integrally closed for all $n \geq 1$.
In a previous paper \cite{Pu-nor} we proved that Itohs conjecture holds when $\A$ is normal and $e_3^\A(A) = 0$. In fact we  proved it more generally  for \CM \ analytically unramified rings. In this paper we prove Itoh's conjecture in general. We prove
\begin{theorem}
\label{main} Let $(A,\m)$ be a \CM \ analytically unramified local ring of dimension $d \geq 3$ and let $\A$ be an $\m$-primary ideal with $e_3^{\A^*}(A) = 0$. Then  $\GA^*$ is \CM.
\end{theorem}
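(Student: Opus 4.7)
Our plan is to establish, by induction on $d$, the stronger claim that the conclusion holds for any asymptotic integral closure filtration $\eF = \{\A_n\}_{n \geq 0}$ with respect to $\A$ (in the sense of item~3 of the examples), not merely for the integral closure filtration itself. This generalization is essential: the integral closure filtration is not preserved under passage to $A/(x)$ for a superficial element $x$, whereas the broader class is, as encoded by \ref{AtoA'}(d) (possibly after a mild general extension of $A$). Our previous paper \cite{Pu-nor} treated the normal case (equivalently, when $\eF$ coincides with the $\A$-adic filtration); we adapt that strategy to the asymptotic class. Standard faithfully flat extensions let us assume the residue field is infinite.

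For the base case $d = 3$, Itoh's analysis in \cite{ItN} expresses $e_3^\eF(A)$ as a nonnegative combination of lengths of the form $\ell(\A_{n+1}/J\A_n)$ for $n \geq 2$, where $J \subseteq \A$ is a minimal reduction coming from a sufficiently general superficial sequence of parameters. The vanishing $e_3^\eF(A) = 0$ forces $J\A_n = \A_{n+1}$ for every $n \geq 2$. Combined with the low-degree behavior (controlled by the standard formulas for $e_0$, $e_1$, $e_2$ of asymptotic integral closure filtrations), the Valabrega--Valla criterion then yields that $\GF$ is \CM.

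For the inductive step $d \geq 4$, pick a sufficiently general superficial element $x \in \A$ and form the quotient $A' = A/(x)$ with the induced filtration $\eF' = \{(\A_n + (x))/(x)\}_{n \geq 0}$. By \ref{AtoA'}(d), $\eF'$ is again an asymptotic integral closure filtration on $A'$, now with respect to $\A A'$. Superficiality of $x$ gives $e_i^{\eF'}(A') = e_i^\eF(A)$ for $i < d$, so in particular $e_3^{\eF'}(A') = 0$; by the inductive hypothesis $G_{\eF'}(A')$ is \CM. The image $x^* \in \GF$ in degree one then satisfies $\GF/x^*\GF \cong G_{\eF'}(A')$, and since the quotient has positive depth, $x^*$ is a nonzerodivisor on $\GF$, whence $\GF$ is \CM. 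The technical heart---and main obstacle---lies in \ref{AtoA'}(d): the image of $(\A^n)^*$ in $A'$ need not equal $(\A^n A')^*$ but does agree with it asymptotically, which is precisely why asymptotic integral closure filtrations are the correct framework for running the induction.
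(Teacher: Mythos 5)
The proposal's inductive scaffold --- induction on $d$, working with asymptotic integral closure filtrations so that the hypotheses survive passage to $A/(x)$ for a superficial $x$, and a Sally-descent step from $d$ to $d-1$ --- is in the right spirit and aligns with the paper's framework. However, the base case $d=3$, which is where essentially all of the difficulty lives, is asserted rather than proved, and the assertion is incorrect.

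You claim that Itoh's analysis in \cite{ItN} expresses $e_3^\eF(A)$ as a nonnegative sum of lengths $\ell(\A_{n+1}/J\A_n)$ for $n\geq 2$, so that $e_3^\eF(A)=0$ forces $J\A_n=\A_{n+1}$ for all $n\geq 2$, after which Valabrega--Valla finishes the argument. No such formula exists in Itoh's work or anywhere in the literature. Itoh proved the inequality $e_3^{\A^*}(A)\geq 0$ by entirely indirect means (local cohomology of normalized Rees algebras) and settled the equality case only when $\A^*=\m$. If the formula you cite were available, the conjecture would have been a one-page consequence of \cite{ItN} and would not have remained open for three decades. Positive length formulas of the type you invoke (Huckaba--Marley and relatives) are only valid under a depth hypothesis like $\depth G_\eF(A)\geq d-1$, which is essentially what one is trying to prove. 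Moreover, even granting $J\A_n=\A_{n+1}$ for $n\geq 2$, Valabrega--Valla requires the intersection conditions $J\cap \A_{n+1}=J\A_n$ for \emph{all} $n$, which do not follow from reduction-number information alone.

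The actual proof in this paper fills precisely this gap with a substantial amount of new machinery: an existential dual filtration on $\omega_A$ via $^*\!\Hom$ over the extended Rees algebra (Theorem \ref{dual}), its compatibility with the Veronese functor (Theorem \ref{dual-Ver}), a CI-approximation with controllably large reduction number (Theorem \ref{ci-approx}), and a delicate Matlis-duality argument (Theorem \ref{ing} and the proof of Lemma \ref{crucial-itoh}) showing $H^2_\M(G_\eF(A))=0$. None of this appears in, or is replaceable by, the argument you propose. In addition, your inductive step states $\GF/x^*\GF\cong G_{\eF'}(A')$ as if automatic; this requires a filtration version of Sally descent (equivalently, the Valabrega--Valla intersection conditions with $(x)$), which you should at least cite or prove, though this part is repairable. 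The base case is not.
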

\emph{Note we are not assuming $A$ is Gorenstein in Theorem \ref{main}.}

Our paper on Itoh's conjecture for normal ideals gives us a road-map of a possible proof of the general case. However the details are considerably harder to prove in the general case.

We first describe the technique we used in the normal case and then we discuss the changes that we made in the general case.

\emph{First technique: Dual filtrations:}\\
We need some preliminaries to get to our discussion.
Let $M$ be a finitely generated $A$-module.
Recall an $\A$-\textit{filtration} $\eG = {\{M_n\}}_{n \in \Z}$ on $M$ is a
collection of submodules of $M$ with the properties
\begin{enumerate}
\item
$M_n \supseteq M_{n+1}$ for all $n \in \Z$,
\item
$M_n = M$ for all $n \ll 0$,
\item
$\A M_n \subseteq M_{n+1}$ for all $n \in \Z$.
\end{enumerate}
If
$\A M_n = M_{n+1}$ for all $n \gg 0$ then we say $\eG$ is $\A$-\emph{stable}.

Let $\ra = \bigoplus_{n\in \Z} \A^n$ be the extended Rees algebra of $A$ \wrt \ $\A$.
If $\eG = \{M_n\}_{n \in \Z}$ is an $\A$-stable filtration on $M$ then  set
$\eR(\eG,M) = \bigoplus_{n \in \Z}M_n$ the \emph{extended Rees-module} of $M$
\emph{\wrt }\ $\eG$.
 Notice that $\eR(\eG,M)$ is a finitely generated graded $\ra$-module. If $\eG$ is $\A$-adic then set $\eR(\eG,M) = \eR(\A, M)$.

Set $M^* = \Hom_A(M, A)$.  For $n \in \Z$ set
$$  M_n^* = \{ f \in M^* \mid    f(M) \subseteq \A^n \} \cong \Hom_A(M, \A^n).      $$
It is well-known that $M_{n+1}^* = \A M_n^*$ for all $n \gg 0$.
Note $M_n^* = M^*$ for $n \leq 0$. Set $\eF = \{M_n^*\}_{n \in \Z}$. We called this filtration \emph{ the dual filtration } of $M^*$ \wrt \  $\A$. This filtration is
classical cf.  \cite[p.\ 12]{Ser}.
 \emph{In \cite[4.4]{Pu-nor}    we constructed  an explicit isomorphism}:
$$\Psi_M \colon \eR(\eF,M^{*}) \quad \xar \Hs_{\ra}(\eR(\A, M),\ra ). $$
It was an important observation that the dual filtrations behaves well  \wrt \ the Veronese functor, see \cite[4.8]{Pu-nor}.

For the general case we first prove:
\begin{theorem}\label{dual}
Let $(A,\m)$ be a local Noetherian ring and let $\A$ be an ideal in $A$.
Let $M$ be  a finitely generated $A$-module and let $\eG$ be an $\A$-stable filtration on
$M$. Then there exists an $\A$-stable filtration  $\eH$ on $M^* = \Hom_A(M, A)$ such that
$$  \eR(\eH,M^{*}) \cong  \Hs_\R(\eR(\eG, M),\ra ). $$
\end{theorem}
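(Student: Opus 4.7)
The plan is to generalize the $\A$-adic dual filtration of \cite[4.4]{Pu-nor} by letting the filtration on $M^*$ depend on $\eG$. Writing $\eG = \{M_k\}_{k \in \Z}$, I define
$$ N_n \;:=\; \{\, f \in M^* \;\mid\; f(M_k) \subseteq \A^{n+k} \text{ for every } k \in \Z \,\} $$
for each $n \in \Z$ (with the convention $\A^j = A$ for $j \leq 0$), and set $\eH := \{N_n\}_{n \in \Z}$. The containments $N_n \supseteq N_{n+1}$ and $\A N_n \subseteq N_{n+1}$ are immediate from $\A \cdot \A^{n+k} \subseteq \A^{n+k+1}$. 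For $n \ll 0$, the $\A$-stability of $\eG$ collapses the defining system to finitely many inclusions $f(M_k) \subseteq \A^{n+k}$ indexed by $k$ in a bounded window (outside of which $M_k = M$ on the left and $M_k = \A^{k-k_0} M_{k_0}$ on the right), and all of these become vacuous once $n+k \leq 0$. Hence $N_n = M^*$ for $n \ll 0$, so $\eH$ satisfies axioms (1)--(3) of an $\A$-filtration.

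Next I build the map $\Psi \colon \eR(\eH, M^*) \xar \Hs_\R(\eR(\eG, M), \ra)$. For $f \in N_n$, send it to the graded $\ra$-linear homomorphism $\phi_f$ of degree $n$ whose $k$-th component is the restriction $x \mapsto f(x)$ for $x \in M_k$; membership in $N_n$ is exactly the statement that $\phi_f$ lands in $\A^{n+k}$ in each degree. Compatibility with the positive part of $\ra$ is $A$-linearity of $f$, while compatibility with $t^{-1}$ (the degree $-1$ generator of $\ra$) forces the graded components of any such homomorphism to be mutually consistent restrictions of a single element of $M^*$. This last observation, together with the degree constraint, shows $\Psi$ is bijective: given $\phi \in \Hs_\R(\eR(\eG, M), \ra)_n$, the $t^{-1}$-compatibility assembles its components into an $f \in M^*$, and the degree condition on each component says precisely $f(M_k) \subseteq \A^{n+k}$, i.e.\ $f \in N_n$. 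Grading and $\ra$-linearity of $\Psi$ are built into the construction, so it is the desired graded isomorphism.

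Finally, the $\A$-stability of $\eH$ (axiom (4)) is deduced from finiteness rather than checked directly: since $\eG$ is $\A$-stable, $\eR(\eG, M)$ is a finitely generated graded module over the graded-Noetherian ring $\ra$, hence so is $\Hs_\R(\eR(\eG, M), \ra)$, and via $\Psi$ so is $\eR(\eH, M^*)$; this finite generation over $\ra$ is equivalent to $\eH$ being $\A$-stable. The principal technical point is the observation that $t^{-1}$-compatibility forces the graded pieces of an element of $\Hs_\R(\eR(\eG, M), \ra)$ to glue into a single element of $M^*$; this is what makes $\eH$ emerge intrinsically from $\eG$, and once it is recognized, the remainder of the proof is essentially bookkeeping.
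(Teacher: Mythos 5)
Your proof is correct, and it takes a more direct route than the paper's. The paper argues existentially: it first proves a representation theorem (Theorem~\ref{repr}) stating that any finitely generated graded $\ra$-module on which $t^{-1}$ acts regularly has the form $\eR(\eF, L)$ for some $A$-module $L$ and $\A$-stable filtration $\eF$; applying this to $\Hs_\R(\eR(\eG, M),\ra)$ (on which $t^{-1}$ visibly acts regularly) yields an abstract pair $(L,\eF)$, and $L$ is then identified with $M^*$ by showing the degree $-c$ graded piece is $\Hom_A(M,A)$ for $c\gg 0$. You instead write the filtration $\eH=\{N_n\}_{n\in\Z}$ down explicitly, with $N_n=\{ f\in M^* \mid f(M_k)\subseteq \A^{n+k}\ \text{for all}\ k\in\Z\}$, and verify the isomorphism degree by degree. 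Your central observation --- that $t^{-1}$-compatibility forces the components $\phi_k\colon M_k\to\A^{n+k}$ of a degree-$n$ homogeneous $\ra$-linear map to be mutual restrictions, which therefore glue into one $f\in M^*$ once $M_k$ stabilizes at $M$ for $k\ll 0$ --- is sound and needs no depth assumption. It is worth noting that the paper later proves precisely this degreewise description in Theorem~\ref{pre-ver}, but under the extra hypothesis $\depth A\geq 2$, invoked through Lemma~\ref{d2} to extend each $\psi_i\colon M_i\to A$ to a map on all of $M$; your gluing argument shows that extension is already forced by the compatibility $\psi_i=\psi_{i-j}|_{M_i}$ (take $j$ with $M_{i-j}=M$), so the formula itself does not need the depth hypothesis. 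Your constructive approach buys an explicit formula for $\eH$ for free, while the paper's existential route isolates Theorem~\ref{repr} as a reusable structural statement. Your finite-generation argument for $\A$-stability of $\eH$ is standard and correct.
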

The isomorphism above is existential. However it is an important ingredient in our proof. We call $\eH$ to be the \emph{dual filtration} on $M^*$ with respect to $\eG$.

In the adic case the good behaviour of dual filtrations with respect to the Veronese functor was just an observation. However in the general case we don't know whether it is true. We prove a special case which is sufficient for our purposes.
\begin{theorem}
\label{dual-Ver} Let $(A,\m)$ be a local Noetherian ring  with $\depth A \geq 2$ and let $\A$ be an $\m$-primary ideal in $A$.
Let $M$ be  a finitely generated $A$-module and let $\eG$ be an $\A$-stable filtration on
$M$ with $\eG_n = M$ for $n \leq 0$ and $\eG_1 \neq M$. Further assume there exists $x\in \A \setminus \A^2$
such that its initial form $x^*$ is $\GA$-regular. Then  for every $l \geq 1$ we have an isomorphism of $\mathcal{R}_{\A^l}(A)$-modules
$$ \Hs_{\ra}(\eR(\eG, M),\ra )^{<l>} \cong \Hs_{\mathcal{R}_{\A^l}(A)}(\eR(\eG^{<l>}, M),\mathcal{R}_{\A^l}(A) ). $$
\end{theorem}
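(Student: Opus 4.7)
The plan is to exhibit the claimed isomorphism as the natural restriction map. Any homogeneous $\ra$-linear map $\phi \colon \eR(\eG, M) \to \ra$ of degree $ln$ satisfies $\phi(\eG_{lk}) \subseteq \A^{lk+ln} = (\mathcal{R}_{\A^l}(A))_{k+n}$, so restriction to $\eR(\eG^{<l>}, M)$ produces a graded $\mathcal{R}_{\A^l}(A)$-linear map of degree $n$. This defines the natural map
\[
\Phi \colon \Hs_\ra(\eR(\eG, M), \ra)^{<l>} \longrightarrow \Hs_{\mathcal{R}_{\A^l}(A)}(\eR(\eG^{<l>}, M), \mathcal{R}_{\A^l}(A)),
\]
and the goal is to show $\Phi$ is bijective. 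A tool used throughout: because $x^*$ is $\GA$-regular and $A$ is Noetherian local, Krull's intersection theorem forces $x$ to be $A$-regular, and induction on $j$ yields the colon formula $(\A^{s+j} :_A x^j) = \A^s$ for all $s \geq 0$ and $j \geq 1$.

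For injectivity, suppose $\phi$ of degree $ln$ restricts to zero on $\eR(\eG^{<l>}, M)$, and let $m \in \eG_k$ with $k = lq + r$, $0 \leq r < l$. The case $r = 0$ gives $\phi(m) = 0$ immediately. If $r > 0$, then $x^{l-r} m \in \eG_{l(q+1)} \subset \eR(\eG^{<l>}, M)$, so $x^{l-r} \phi(m) = \phi(x^{l-r} m) = 0$; since $\phi(m) \in A$ and $x^{l-r}$ is $A$-regular, $\phi(m) = 0$.

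For surjectivity, given $\psi$ of degree $n$, the strategy is to recover an $\ra$-linear lift $\widetilde{\phi}$ entirely from the degree-zero component of $\psi$. Since $\eR(\eG^{<l>}, M)_0 = M$ and $(\mathcal{R}_{\A^l}(A))_n = \A^{ln}$, that component is an $A$-linear map $\tilde f \colon M \to \A^{ln}$. Applying $\mathcal{R}_{\A^l}(A)$-linearity of $\psi$ to the degree $-1$ generator $1 \in A = (\mathcal{R}_{\A^l}(A))_{-1}$, whose action on $\eR(\eG^{<l>}, M)$ is the inclusion $\eG_{lk} \hookrightarrow \eG_{l(k-1)}$, yields the compatibility $\psi_k(v) = \tilde f(v)$ as elements of $A$ for all $k \geq 0$ and $v \in \eG_{lk}$. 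I then propose the extension $\widetilde{\phi}(m) := \tilde f(m)$ for $m \in \eG_k$.

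The main obstacle is verifying that this $\widetilde{\phi}$ actually lands in the correct graded piece, i.e.\ that $\tilde f(\eG_k) \subseteq \A^{k+ln}$ for every $k \geq 0$ and not merely for multiples of $l$; this is precisely where the $\GA$-regularity of $x^*$ is indispensable. For $k = lq + r$ with $0 < r < l$ and $m \in \eG_k$, we have $x^{l-r} m \in \eG_{l(q+1)}$, and combining the compatibility with $A$-linearity of $\tilde f$ gives
\[
x^{l-r} \tilde f(m) = \tilde f(x^{l-r} m) = \psi_{q+1}(x^{l-r} m) \in \A^{l(q+1+n)}.
\]
The colon formula then forces $\tilde f(m) \in \A^{k+ln}$. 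Once this containment is in hand, $A$-linearity of $\tilde f$ together with degree bookkeeping shows $\widetilde{\phi}$ is $\ra$-linear of degree $ln$, and the compatibility observation gives $\Phi(\widetilde{\phi}) = \psi$, completing the proof.
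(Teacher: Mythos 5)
Your proof is correct, and it takes a more direct route than the paper's. The paper first establishes an intermediate identification (its Theorem 4.3) that realizes each graded piece $\Hs_{\ra}(\eR(\eG,M),\ra)_c$ as a subset of $\Hom_A(M,A)$; that step invokes Lemma 4.2, which is where the hypothesis $\depth A \geq 2$ enters, and then deduces the Veronese comparison by showing two such subsets coincide via the colon formula $(\A^{s+j}\colon x^j)=\A^s$. You instead exhibit the isomorphism directly as the restriction map $\Phi$ and invert it bare-handed: the degree-zero component of $\psi$ is already an $A$-linear map $\tilde f\colon M=\eR(\eG^{<l>},M)_0\to\A^{ln}$ defined on all of $M$, so no extension-of-maps lemma (hence no depth hypothesis) is needed; what remains is the same colon-formula computation to verify $\tilde f(\eG_k)\subseteq\A^{k+ln}$ for $k$ not a multiple of $l$. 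The two proofs share their essential engine — $\GA$-regularity of $x^*$ yielding $(\A^{s+j}\colon x^j)=\A^s$ — but your argument is self-contained, bypasses the detour through $\Hom_A(M,A)$, and as a bonus shows that the $\depth A\geq 2$ hypothesis in the statement is actually superfluous. (One could tighten a small point: in the injectivity step you only treat $k\geq 0$, but for $k<0$ one has $\eG_k=M=\eG_0$ and $\phi$ on degree $k$ is determined by $\phi$ on degree $0$ via the $t^{-1}$-action, so the vanishing propagates automatically.)
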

Both Theorems \ref{dual} and \ref{dual-Ver} are considerably more difficult than \cite[4.4, 4.8]{Pu-nor} which we proved in the adic-case. Furthermore the proof of these results is a bit non-intuitive.

\emph{Extension of some results from the adic-case.}\\
(a) By a result of Elias, \cite[2,2]{E}, it is known that $\depth G_{I^n}(A)$ is constant for $n \gg 0$ for any $\m$-primary ideal $I$. We have to extend this result to filtration's. Elias's proof does not extend to filtration's.  We prove
\begin{theorem}
  \label{asymp}
  Let $(A,\m)$ be a \CM \  local ring.  Let $\A$  be an $\m$-primary ideal in $A$. Let $\eF = \{ \A_n \}_{n\geq 0}$ be a multiplicative $\A$-stable filtration.
  Then \\ $\depth G_{\eF^{<m>}}(A) $ is constant for
  $m \gg 0$.
\end{theorem}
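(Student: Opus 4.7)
The plan is to lift the question to the extended Rees algebra, where the passage to the associated graded becomes multiplication by the uniformizer, and then to exploit the compatibility between the Veronese construction and local cohomology. Set $\eR'=\bigoplus_{n\in\Z}\A_n$ (with $\A_n=A$ for $n\leq 0$) and $u=t^{-1}\in\eR'_{-1}$; Cohen-Macaulayness of $A$ makes $u$ a nonzerodivisor on $\eR'$, and $\eR'/u\eR'=G_\eF(A)$. The $m$-th Veronese $(\eR')^{<m>}$ is the extended Rees algebra of $\eF^{<m>}$ with uniformizer $u^m\in(\eR')^{<m>}_{-1}$, yielding the short exact sequence
\[0\longrightarrow (\eR')^{<m>}(1)\xrightarrow{\;u^m\;}(\eR')^{<m>}\longrightarrow G_{\eF^{<m>}}(A)\longrightarrow 0\]
of graded $(\eR')^{<m>}$-modules.

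Next, setting $\mathcal{M}=\m+u\eR'+\eR'_{+}$ (the unique homogeneous maximal ideal of $\eR'$) and $V_i:=H^i_\mathcal{M}(\eR')$, I would apply $H^{\bullet}_{\mathcal{M}^{<m>}}$ to this sequence and invoke the Veronese transfer $H^i_{\mathcal{M}^{<m>}}((\eR')^{<m>})\cong V_i^{<m>}$. The resulting long exact sequences express $H^i_{\mathcal{M}^{<m>}}(G_{\eF^{<m>}}(A))$ in terms of kernels and cokernels of $u^m$ acting on the graded pieces of the $V_i$. Each $V_i$ is graded-Artinian with finite $a$-invariant $a_i$, so for $m>\max_i a_i$ the positive-degree part of $V_i^{<m>}$ vanishes apart from degree $0$ (where it equals the fixed $(V_i)_0$). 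Combined with a uniform control on the $u^m$-action in non-positive degrees, this forces the vanishing pattern of $H^i_{\mathcal{M}^{<m>}}(G_{\eF^{<m>}}(A))$ to become independent of $m$, giving the constancy of $\depth G_{\eF^{<m>}}(A)$.

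The hard part will be establishing uniform control on the $u^m$-action on the negative-degree parts of the $V_i$: a priori these modules can be nonzero in arbitrarily negative degrees, and the maps $u^m\colon(V_i)_{m(n+1)}\to (V_i)_{mn}$ for $n<0$ carry data whose $m$-dependence must be tamed. I expect to handle this by combining the finite generation of $\eR'$ over the standard graded Rees algebra $\ra=\eR_\A(A)$ with a Hoa-type stabilization theorem (depth of Veronese submodules stabilizes for finitely generated graded modules over standard graded Noetherian local rings). A secondary technical point is rigorously justifying the Veronese transfer for local cohomology in the $\Z$-graded setting and handling the endpoint range $i\geq d$ of the long exact sequence, where contributions from $H^d_\m(A)$ must be accounted for separately.
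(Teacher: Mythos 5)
Your approach and the paper's are structurally different, and I think yours has a real gap at exactly the point you flag as "the hard part."

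You work with the extended Rees algebra $\eR'=\bigoplus_{n\in\Z}\A_n$ and use that the extended Rees module commutes with the Veronese functor (which the associated graded ring does not). That observation is sound, and so is the short exact sequence $0\to(\eR')^{<m>}(1)\xrightarrow{u^m}(\eR')^{<m>}\to G_{\eF^{<m>}}(A)\to 0$. The paper instead works with $L^\eF(A)=\bigoplus_{n\geq 0}A/\A_{n+1}$ as a module over the ordinary Rees algebra $\ta$; this module also commutes with Veronese in the form $L^{\eF^{<m>}}(A)(-1)\cong\left(L^\eF(A)(-1)\right)^{<m>}$, and the first fundamental exact sequence $0\to G_\eF(A)\to L^\eF(A)\to L^\eF(A)(-1)\to 0$ plays the role of your $u^m$-sequence. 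So far the two routes are roughly parallel.

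The genuine gap is in the negative-degree control, which you concede a priori but propose to repair by invoking a ``Hoa-type stabilization theorem (depth of Veronese submodules stabilizes for finitely generated graded modules over standard graded Noetherian local rings).'' That proposal is circular: a stabilization of $\depth G_{\eF^{<m>}}(A)$ under Veronese is precisely the content of Theorem~\ref{asymp}, and the cited black box is not a standard theorem you can simply import. What the paper actually uses to tame the negative degrees is a specific, previously established property of the $L$-module: for $i<\dim M$ the function $n\mapsto\ell\bigl(H^i_{\M}(L^\eF(M))_n\bigr)$ agrees with a polynomial for $n\ll 0$ (\ref{Artin}(d), from \cite[6.4]{Pu5}), in addition to vanishing for $n\gg 0$. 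With this in hand, the paper defines $p$ as the least index $i<r$ with either $H^i(L^\eF(M))_{-1}\neq 0$ or $\ell\bigl(H^i(L^\eF(M))\bigr)=\infty$, and shows $\depth G_{\eF^{<m>}}(M)=p$ for $m\gg 0$: for $i<p$ the cohomology is concentrated in a bounded window avoiding degree $-1$, so its Veronese $(\,\cdot\,)(-1)^{<m>}$ vanishes; while if depth were $>p$ for some large $m$, the Veronese vanishing together with the eventual polynomiality in negative degrees would force $H^p(L^\eF(M))$ to have finite length and to vanish in degree $-1$, contradicting the choice of $p$. Your framework with $V_i=H^i_{\mathcal{M}}(\eR')$ does not obviously carry this polynomiality in negative degrees, and you do not sketch a route to it, so the argument does not close. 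To make your route work you would need an analogue of \ref{Artin}(d) for $H^i_{\mathcal M}(\eR')$ (or reduce to the $L$-module as the paper does), not a generic stabilization theorem.
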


(b)  Let $\eF = \{ \A_n \}_{n\geq 0}$ be an asymptotic integral closure filtration with respect to $\A$.  When $\depth A \geq 2$ and $\A_n = I^n$ (for all $n$) for some $\m$-primary ideal in $A$  then it is known that $\depth G_{I^n}(A) \geq 2$ for all $n \gg 0$, see \cite[3.1]{HH}, We need to generalize this results to filtration's which are not adic. We prove
  \begin{theorem}
  \label{hh}
  Let $(A,\m)$ be a \CM \ analytically unramified local ring with $\dim  A \geq 2$.  Let $\A$  be an $\m$-primary ideal in $A$. Let $\eF = \{ \A_n \}_{n\geq 0}$ be asymptotic integral closure filtration \wrt \ $\A$. Then $\depth G_{\eF^{<m>}}(A) \geq 2$ for all $m \gg 0$.
  \end{theorem}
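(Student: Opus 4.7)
\emph{Reduction step.} Choose $N$ so that $\A_n = (\A^n)^*$ for all $n \geq N$. Then for every $m \geq N$ and every $n \geq 1$,
\begin{equation*}
\eF^{<m>}_n \;=\; \A_{mn} \;=\; (\A^{mn})^{*} \;=\; \bigl((\A^m)^n\bigr)^{*},
\end{equation*}
so $\eF^{<m>}$ is the full integral closure filtration of the $\m$-primary ideal $J := \A^m$. By Theorem \ref{asymp} the function $m \mapsto \depth G_{\eF^{<m>}}(A)$ is constant for $m \gg 0$, so it suffices to produce a single $m_0 \geq N$ with $\depth G_{J^{*}}(A) \geq 2$ when $m = m_0$.

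\emph{Two superficial elements and Valabrega--Valla.} After passing to the faithfully flat extension $A(X) := A[X]_{\m A[X]}$ (which preserves the CM and analytically unramified hypotheses as well as the integral closure filtration, by \ref{AtoA'}), take generic $y_1, y_2 \in J \setminus \m J$. They form an $A$-regular sequence (since $A$ is CM of dimension $\geq 2$) and a joint superficial sequence for $\{(J^n)^{*}\}_n$. By the Valabrega--Valla criterion, their initial forms $y_1^{*}, y_2^{*} \in G_{J^{*}}(A)_1$ form a regular sequence on $G_{J^{*}}(A)$, giving the desired depth bound, as soon as
\begin{equation*}
(J^{n+1})^{*} \cap (y_1, y_2) \;=\; y_1 (J^n)^{*} + y_2 (J^n)^{*} \qquad \text{for every } n \geq 1.
\end{equation*}
Standard superficial-element arguments yield these equalities for $n \gg 0$; the substantive task is to propagate them to every $n \geq 1$.

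\emph{Propagation via Veronese of local cohomology.} I would express the small-$n$ defect in terms of the low-dimensional local cohomology of the ambient ring $\GA^{*}$. Since $A$ is analytically unramified, $\bigoplus_{n \geq 0}(\A^n)^{*}$ is a finite module over $\ra$, so $\GA^{*}$ is a finite module over $\GA$; consequently, by the Grothendieck--Serre finiteness theorem, $H^0_M(\GA^{*})$ and $H^1_M(\GA^{*})$ (with $M$ the irrelevant maximal ideal) have only finitely many non-zero homogeneous components. Theorem \ref{dual-Ver}, threaded through a Sancho de Salas-type sequence comparing the local cohomology of $G_{J^{*}}(A)$ with that of its Rees module and of $A$, then identifies the corresponding low-degree cohomology of $G_{J^{*}}(A)$ with suitable Veronese slices of $H^0_M(\GA^{*})$ and $H^1_M(\GA^{*})$. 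Choosing $m$ larger than the top non-zero degree of these modules kills those slices, giving the Valabrega--Valla equalities above for every $n \geq 1$.

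\emph{Main obstacle.} The decisive point is the Veronese compatibility in the third step. Dual filtrations under integral closure do not visibly commute with the Veronese functor --- this is precisely what necessitates Theorem \ref{dual-Ver} --- and carefully threading that compatibility through the Sancho de Salas argument, so that the low-dimensional local cohomology of $G_{J^{*}}(A)$ really is controlled by that of $\GA^{*}$, is where the main technical weight of the proof will lie.
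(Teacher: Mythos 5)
Your reduction step is correct as far as it goes: for $m\geq N$ the Veronese $\eF^{<m>}$ is indeed the integral closure filtration of $J=\A^m$, and Theorem \ref{asymp} reduces the problem to finding one $m_0 \gg 0$ with $\depth G_{\eF^{<m_0>}}(A)\geq 2$. But everything after that is a sketch of a strategy, not a proof, and the strategy itself points in the wrong direction. The ``propagation via Veronese of local cohomology'' paragraph is where the depth bound would actually have to be established, yet it is entirely heuristic: you need to show $(J^{n+1})^{*}\cap(y_1,y_2)=y_1(J^n)^{*}+y_2(J^n)^{*}$ for \emph{all} $n\geq 1$ once $m_0$ is large, and nothing you write produces that. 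Moreover, invoking Theorem \ref{dual-Ver} here is misdirected --- that result concerns the $\Hom$-dual filtration on $\omega_A$ and belongs to the Gorenstein duality machinery used in the final proof of Itoh's conjecture, not to depth estimates for integral closure filtrations. Appealing to Grothendieck--Serre finiteness of $H^0_M(\GA^{*})$ and $H^1_M(\GA^{*})$ and a Sancho de Salas sequence does not by itself control the local cohomology of $G_{(J^n)^{*}}(A)$ for all small degrees; this is the whole difficulty, not something that falls out of finiteness.

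The missing idea is to choose the Veronese index differently. The Rees ring $\tF=\bigoplus_{n\geq 0}\A_n$ is a finitely generated graded $A$-algebra (being finite over $\ta$), so some Veronese $\tF^{<m>}$ is \emph{standard} graded, i.e.\ $\A_{mn}=\A_m^{\,n}$ for all $n\geq 0$. Taking $m$ also $\geq N$ and setting $J=\A_m$ (note: $\A_m$, not $\A^m$), one gets that $\eF^{<m>}$ is the honest $J$-\emph{adic} filtration, and $J^n=\A_{mn}=(\A^{mn})^{*}$ for all $n\geq 1$, so $J$ is (asymptotically) normal. Now the Huckaba--Huneke theorem \cite[3.1]{HH} applies directly to the adic filtrations of the powers of $J$: $\depth G_{J^n}(A)\geq 2$ for $n\gg 0$. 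Since $G_{J^n}(A)=G_{\eF^{<mn>}}(A)$, this gives infinitely many Veronese indices where the depth is $\geq 2$, and Theorem \ref{asymp} upgrades this to all indices $\gg 0$. Your choice $J=\A^{m_0}$ does not have this adic property ($\eF^{<m_0>}$ is the integral closure filtration of $\A^{m_0}$, not an adic filtration), so Huckaba--Huneke cannot be invoked directly, and the argument you would need to supply in its place is precisely the hard part you left as an ``obstacle.''
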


(c) Let $\A$ be an $\m$-primary ideal an let $\eF = \{ \A_n \}_{n \geq 0}$ be a multiplicative $\A$-stable filtration on $A$. Let $d = \dim A$. Set
\[
a_\eF(A) = \max \{ n \mid H^d_{\GA_+}(G_\eF(A))_n \neq 0 \}.
\]
The integer $a_\eF(A)$ is called the $a$-invariant of $G_\eF(A)$
  By a result due to Hoa, \cite{Hoa},   the a-invariant of $G_\A(A)$ is negative if and only if reduction number of $\A^n$ is less than $d = \dim A$ for all $n \gg 0$. We are unable to extend this to filtrations. However the following result is sufficient for us.
\begin{theorem}
\label{Hoa} Let $(A,\m)$ be a Noetherian local ring of dimension $d \geq 1$ and let $\A$ be an $\m$-primary ideal. Let $\eF = \{ \A_n \}_{n\geq 0}$ be a multiplicative $\A$-stable filtration.
Suppose $\eF^{<c>}$ is $J = \A_c$-adic and  that reduction number of $J^n$ is $< d$ for all $n \gg 0$. Then the $a$-invariant of $G_\eF(A)$ is negative.
\end{theorem}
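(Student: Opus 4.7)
The plan is to apply Hoa's theorem to the adic filtrations $\{J^n\}_{n \geq 0}$ and then propagate the resulting vanishing of top local cohomology from the Veronese $G_{\eF^{<cn>}}(A) = G_{J^n}(A)$ back to $G_\eF(A)$ via a shifted-Veronese lifting argument controlled by the $\A$-stability of $\eF$.

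First, since $\eF^{<c>}$ is $J$-adic, one has $\A_{ck} = J^k$ for all $k \geq 0$, so $\eF^{<cn>}$ is the $J^n$-adic filtration and $G_{J^n}(A) = G_{\eF^{<cn>}}(A)$. Applying Hoa's theorem \cite{Hoa} to each $J^n$, the hypothesis $r(J^n) < d$ for $n \gg 0$ gives $a(G_{J^n}(A)) < 0$; equivalently, by Hoa in the other direction, $a(G_J(A)) < 0$.

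Next, choose $N_0$ such that $\A \A_l = \A_{l+1}$ for all $l \geq N_0$. Combined with $\A^c \subseteq J = \A_c$ this forces $J \A_l = \A_{l+c}$ for $l \geq N_0$. For each residue $i \in \{0, 1, \ldots, c-1\}$, consider the $J$-adic associated graded module of the finitely generated $A$-module $\A_{N_0+i}$:
\[
  \mathrm{gr}_J(\A_{N_0+i}) \;=\; \bigoplus_{k \geq 0} \A_{N_0+i+ck}/\A_{N_0+i+c(k+1)}.
\]
It is a graded $G_J(A)$-module generated in degree $0$, and hence is a quotient of a finite direct sum of copies of $G_J(A)$. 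Since top local cohomology is right-exact on surjections, $a(\mathrm{gr}_J(\A_{N_0+i})) \leq a(G_J(A)) < 0$.

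Third, the natural graded surjection $\mathrm{gr}_J(\A_{N_0+i}) \twoheadrightarrow \bigoplus_{k \geq 0} \A_{N_0+i+ck}/\A_{N_0+i+ck+1}$ (obtained by enlarging the denominator from $\A_{N_0+i+c(k+1)}$ to $\A_{N_0+i+ck+1}$, using $c \geq 1$) together with the Veronese formula $H^d_{R_+^{(c)}}((G_\eF)^{(c,r)})_q = H^d_{R_+}(G_\eF)_{cq+r}$ yields $H^d_{(G_\A(A))_+}(G_\eF(A))_{N_0+i+ck} = 0$ for all $k \geq 0$. As $i$ ranges over all residues modulo $c$ this covers every degree $j \geq N_0$, giving $H^d_{(G_\A(A))_+}(G_\eF(A))_j = 0$ for all $j \geq N_0$.

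The main obstacle will be the finitely many remaining degrees $j \in [0, N_0)$, i.e.\ the ``pre-stable'' degrees of $\eF$. The shifted-Veronese lifting argument above cannot reach them directly, because any $\mathrm{gr}_J(\A_{N_0+i})$ is naturally supported in degrees $\geq N_0$ after translating back to the grading of $G_\eF(A)$, and the obvious use of the short exact sequence $0 \to G_\eF^{\geq N_0}(A) \to G_\eF(A) \to G_\eF^{<N_0}(A) \to 0$ (whose finite-length quotient is $(G_\A)_+$-nilpotent with vanishing higher local cohomology, yielding $H^d(G_\eF) \cong H^d(G_\eF^{\geq N_0})$ for $d \geq 2$) only reduces the question to controlling $a(\mathrm{gr}_\A(\A_{N_0}))$, where the grading shift by $-N_0$ in the identification $G_\eF^{\geq N_0}(A) \cong \mathrm{gr}_\A(\A_{N_0})(-N_0)$ adds $N_0$ back to the $a$-invariant. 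Closing this gap requires a further iteration of the shifted-Veronese argument on $\mathrm{gr}_\A(\A_{N_0})$ (which is itself adic and hence ``stable from degree $0$'') combined with a Sancho de Salas-type comparison between $H^\ast$ of the Rees algebra $\mathcal{R}(\eF)$ and $H^\ast$ of $G_\eF(A)$; this careful bookkeeping between the several gradings involved is the technical heart of the proof.
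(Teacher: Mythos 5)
Your step with $i=0$ already contains the paper's entire argument: since $\eF^{<c>}$ is $J$-adic \emph{exactly} (the hypothesis says $\A_{ck}=J^k$ for all $k\geq 0$, not just $k\gg 0$), you do not need $N_0$ at all for $i=0$; the module $\mathrm{gr}_J(\A_0)=G_J(A)$ and the surjection you write becomes precisely the degree-preserving ring surjection $G_J(A)\twoheadrightarrow G_\eF(A)^{<c>}=\bigoplus_{k\geq 0}\A_{ck}/\A_{ck+1}$, giving $H^d(G_\eF(A))_{ck}=0$ for all $k\geq 0$, and in particular $H^d(G_\eF(A))_0=0$. This is exactly the paper's proof, and it is all that is needed, because of a standard fact you never invoke: for a filter-regular element $x\in G_\A(A)_1$ (which exists after a residue-field extension) the map $H^d(G_\eF(A))(-1)\xrightarrow{\,x\,}H^d(G_\eF(A))$ is surjective since $\dim G_\eF(A)/xG_\eF(A)<d$, so $H^d(G_\eF(A))_{n-1}\twoheadrightarrow H^d(G_\eF(A))_n$ for every $n$. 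Thus nonvanishing of $H^d$ propagates downward, and $a(G_\eF(A))\geq 0$ would force $H^d(G_\eF(A))_0\neq 0$, a contradiction.

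This is where your proposal genuinely goes wrong: by trying to verify $H^d(G_\eF(A))_j=0$ degree by degree for \emph{all} $j\geq 0$, you created a spurious problem in degrees $0<j<N_0$ that the paper never faces (and that your sliding arguments cannot reach anyway, since vanishing propagates \emph{upward} in degree, not downward). The residue classes $i=1,\dots,c-1$, the stabilization bound $N_0$, the decomposition $G_\eF^{\geq N_0}\subset G_\eF$, and the proposed Sancho de Salas comparison are all unnecessary and do not lead anywhere; the ``technical heart'' you describe is not where the proof lives. The actual content is the single surjection $G_J(A)\twoheadrightarrow G_\eF(A)^{<c>}$ combined with the filter-regular surjection $H^d_{n-1}\twoheadrightarrow H^d_n$, which reduces the whole statement to controlling degree $0$.
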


\emph{An extension to filtrations of a technique due to us.} \\
In \cite{Pu5} we introduced a new technique in the adic case to investigate problems relating to associated graded modules.
The extension to filtrations is routine and practically all the proofs goes through.
\s Let $\eF = \{ M_n \}_{n \in \Z}$ be an $\A$-stable filtration on $M$  with $M_n = M$ for $n \leq 0$.  Set $L^\eF(M) = \bigoplus_{n \geq 0} M/M_{n+1}$.
Let $\ta$ be the Rees algebra of $\A$ and let $\Sc(\eF, M) = \bigoplus_{n \geq 0}M_n$ be the Rees module of $M$ with respect to $\eF$.
We note that \\ $M[t]/\Sc(\eF, M) = L^\eF(M)(-1)$.
Thus $L^\eF(M)$ is a $\ta$-module. Note $L^\eF(M)$ is NOT a finitely generated $A$-module.

We need the following result:
\begin{theorem}\label{d2-h1L}
Let $(A,\m)$ be a Noetherian local ring and let $\A$ be an $\m$-primary ideal. Let $M$ be a \CM \ $A$-module of dimension $r \geq 2$ and let \\  $\eF = \{ M_n \}_{n \in \Z}$ be an $\A$-stable filtration on $M$  with $M_n = M$ for $n \leq 0$.
Let $\M$ be the maximal homogeneous ideal of $\ta$ and let $H^i_\M(-)$ denote the $i^{th}$ local cohomology functor with respect to $\M$.
If $\depth G_{\eF^{<m>}}(M) \geq 2$ for some $m$ then $H^1_\M(L^\eF(M))_n = 0$ for $n < 0$.
\end{theorem}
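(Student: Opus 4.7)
The plan is to analyze the short exact sequence
\begin{equation*}
0 \to \Sc(\eF, M) \to M[t] \to L^\eF(M)(-1) \to 0 \tag{$\ast$}
\end{equation*}
of graded $\ta$-modules via local cohomology at $\M$. Since $M$ is \CM\ of dimension $r \geq 2$, any $M$-regular sequence of length $2$ in $\m$ acts degreewise and remains $\ta$-regular on $M[t]$, so $\depth_\M M[t] \geq 2$; in particular $H^0_\M(M[t]) = H^1_\M(M[t]) = 0$. The long exact sequence of local cohomology attached to $(\ast)$ then yields, for every $n$, an injection
\begin{equation*}
H^1_\M(L^\eF(M))_{n-1} \hookrightarrow H^2_\M(\Sc(\eF, M))_n
\end{equation*}
which is an isomorphism when $r \geq 3$; the case $r = 2$ requires separate analysis of the tail involving $H^2_\M(M[t])$. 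Thus the problem reduces to showing $H^2_\M(\Sc(\eF, M))_n = 0$ for all $n \leq 0$.

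To exploit the hypothesis $\depth G_{\eF^{<m>}}(M) \geq 2$, I use that local cohomology commutes with the Veronese functor: for any graded $\ta$-module $N$, $H^i_\M(N)^{<m>} \cong H^i_{\M^{<m>}}(N^{<m>})$, where $\M^{<m>}$ is the maximal homogeneous ideal of $\mathcal{R}_{\A^m}(A) = \ta^{<m>}$. Applied to $\Sc(\eF, M)$, whose $m$-th Veronese is $\Sc(\eF^{<m>}, M)$, this converts the target vanishing in degrees divisible by $m$ into a vanishing statement for $H^2_{\M^{<m>}}(\Sc(\eF^{<m>}, M))$. Next, from the short exact sequence
\begin{equation*}
0 \to \Sc(\eF^{<m>}, M)(+1) \to \Sc(\eF^{<m>}, M) \to G_{\eF^{<m>}}(M) \to 0
\end{equation*}
and the vanishing $H^i_{\M^{<m>}}(G_{\eF^{<m>}}(M)) = 0$ for $i \leq 1$ (forced by the depth hypothesis), one extracts injections $H^2_{\M^{<m>}}(\Sc(\eF^{<m>}, M))_{n+1} \hookrightarrow H^2_{\M^{<m>}}(\Sc(\eF^{<m>}, M))_n$. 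Combined with the Artinianness of the local cohomology of a finitely generated graded module (giving vanishing in large degrees), vanishing then propagates upward from any point at which it is known.

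The principal obstacle is the direction of propagation: the injections above push vanishing from low to high degree, whereas we want vanishing at $n \leq 0$. I plan to bridge this by analyzing $(\ast)$ directly in low degrees, using the explicit values $\Sc(\eF, M)_0 = M$ and $L^\eF(M)_n = 0$ for $n < 0$ to pin down the behaviour there, and by invoking Theorem \ref{asymp} to vary $m$ over a cofinal set on which $\depth G_{\eF^{<m>}}(M) \geq 2$, so that the Veronese conclusions in distinct congruence classes can be patched into vanishing in every negative degree. This assembly of local information from several Veronese filtrations, rather than from a single $m$, is where the argument genuinely extends the adic case of \cite{Pu5} and is expected to be the most delicate step.
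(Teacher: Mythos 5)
Your reduction via the short exact sequence $0 \to \Sc(\eF, M) \to M[t] \to L^\eF(M)(-1) \to 0$ and via Veronese is a reasonable setup, and you correctly diagnose that the injections $H^2_{\M^{<m>}}(\Sc(\eF^{<m>}, M))_{n+1} \hookrightarrow H^2_{\M^{<m>}}(\Sc(\eF^{<m>}, M))_n$ propagate vanishing in the wrong direction. However, the proposed bridge does not close the gap. Knowing $\Sc(\eF, M)_n$ or $L^\eF(M)_n$ as $A$-modules in low degrees tells you nothing about $H^1_\M(L^\eF(M))_n$ there, since local cohomology of a graded module can be nonzero in degrees where the module vanishes. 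And the patching idea fails arithmetically: the Veronese conclusion for a fixed $m$ with $\depth G_{\eF^{<m>}}(M) \geq 2$ gives $H^1_\M(L^\eF(M))_{nm-1} = 0$ for all $n \in \Z$, i.e.\ vanishing only in degrees $\equiv -1 \pmod m$. Degree $-2$ is never reached this way for any $m \geq 2$ (one would need $nm - 1 = -2$, forcing $m = 1$), so varying $m$ over a cofinal set still leaves infinitely many negative degrees uncontrolled.

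The idea you are missing is the one the paper actually uses: after extracting $H^1_\M(L^\eF(M))_{-1} = 0$ from the Veronese argument (essentially as you do, but fed through the first fundamental sequence $0 \to G_{\eF^{<m>}}(M) \to L^{\eF^{<m>}}(M) \to L^{\eF^{<m>}}(M)(-1) \to 0$ together with $*$-Artinianness to get $H^0 = H^1 = 0$ for $L^{\eF^{<m>}}(M)$), reduce modulo an $M$-superficial element $x$ and invoke the second fundamental exact sequence and its long exact sequence \eqref{longH}. Since $L^{\ov{\eF}}(M/xM)$ lives in degrees $\geq 0$, we have $H^0_\M(L^{\ov{\eF}}(M/xM))_n = 0$ for $n < 0$, and the long exact sequence gives an injection $H^1_\M(L^\eF(M))_{n-1} \hookrightarrow H^1_\M(L^\eF(M))_n$ for every $n < 0$. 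This is the downward propagation you need: starting from the known vanishing at degree $-1$, descend to all negative degrees. Without this step your argument cannot reach degrees other than $-1$.
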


Armed with these techniques the strategy to prove Theorem \ref{main} is mostly similar to that employed to prove Itoh's conjecture for normal ideals.
Here is an overview of the contents of this paper. In section two we introduce notation and discuss a few
preliminary facts that we need. In section three we prove Theorem \ref{dual} and in the next section we prove Theorem \ref{dual-Ver}. In section five we discuss a few preliminary properties of $L^\eF(M)$ that we need and prove Theorem \ref{d2-h1L}. In section six we prove Theorems \ref{asymp} and \ref{hh}. In the next section we prove Theorem \ref{Hoa}. Finally in section eight we prove Itoh's conjecture.
\section{Preliminaries}
In this section we discuss a few preliminaries that we need. In this paper all rings are commutative Noetherian and all modules  (unless stated otherwise) are assumed to
be finitely generated. Let $A$ be a local ring, $\A$ an ideal in $A$ and let $N$ be an $A$-module. Then set
 $\ell(N)$ to be length of $N$ and $\mu(N)$ the number of minimal generators of $N$.

\s \label{mod-N}
Let $\eG = {\{M_n\}}_{n \in \Z}$ be an $\A$-stable filtration on $M$ and let $N$ be a submodule of $M$. By the \emph{quotient filtration} on $M/N$ we mean the filtration   $\ov{\eG} = \{ (M_n + N)/N \}_{\nZ}$.
If $\eG$ is an $\A$-stable filtration on
$M$ then $\ov{\eG}$ is an $\A$-stable filtration on
$M/N$.\textit{ Usually} for us $N =\xb M$ for some $\xb = x_1,\ldots,x_s \in \A$.

\s If $\eG = \{M_n\}_{n \in \Z}$ is an $\A$-stable filtration on $M$ then  set
$\eR(\eG,M) = \bigoplus_{n \in \Z}M_n$ the \emph{extended Rees-module} of $M$
\emph{\wrt }\ $\eG$.
 Notice that $\eR(\eG,M)$ is a finitely generated graded $\ra$-module. If $\eG$ is the usual $\A$-adic filtration then
set \\  $ \eR(\A, M) = \eR(\eG,M)$.

Set $G_\eG(M)=\bigoplus_{n \in \Z} M_n/M_{n+1}$, the \emph{associated graded
module }of $M$ \emph{\wrt} \ $\eG$. Notice $G_\eG(M)$ is a finitely generated graded module over
 $G_{\A}(A)$.
 Furthermore $\eR(\eG,M) /t^{-1}\eR(\eG,M) = G_\eG(M)$.

\s \label{shift-filt}
Let $\eG = {\{M_n\}}_{n \in \Z}$ be an $\A$-stable filtration on $M$ and let $s \in \Z$. By the $s$-\emph{th shift} of $\eG$, denoted by $\eG(s)$ we mean the filtration ${\{ \eG(s)_n\}}_{n \in \Z}$ where
$\eG(s)_n = \eG_{n+s}$. Clearly
$$G_{\eG(s)}(M) = G_\eG (M)(s) \quad \text{and} \quad \R(\eG(s), M) = \R(\eG, M)(s). $$

\s All   filtrations in this paper $\eG = \{M_n\}_{n \in \Z}$ will be \textit{separated} i.e.,
 $\bigcap_{\nZ}M_n = \{0\}$.
This is automatic if  $\A \neq A$ and $\eG$ is $\A$-stable.
If $m$ is a non-zero element of $M$ and if $j$ is the largest integer such that
$ m \in M_j$,
then we let $m^{*}_{\eG}$ denote the image of $m $ in $M_j  \setminus M_{j+ 1}$ and we call it the \textit{initial form} of $m$ \wrt \ $\eG$. If $\eG$ is clear from the context then we drop the subscript $\eG$.

\s \emph{Veronese functor}. Let $\eF = \{M_n \}_{n\in \Z}$ be an $\A$-stable filtration. For $l \geq 1$ set $\eF^{<l>}= \{M_{nl} \}_{n \in \Z}$. Notice $\eF^{<l>}$ is an $\A^l$-stable filtration. We also have an $\ral$-isomorphism $\eR(\eF,M)^{<l>} \cong \eR(\eF^{<l>}, M)$.

\s \label{sup} Let $\eF$ be an $\A$-stable filtration on $M$. We say $x \in \A$ is $M$-superficial with respect to $\eF$ if there exists $c$ such that  $(M_{n+1}  \colon x) \cap M_c = M_n$ for all $n \gg 0$.
If $\grade(\A, M) > 0$ then it can be shown that an $M$-superficial element $x$ is $M$-regular. Furthermore in this case $(M_{n+1} \colon x) = M_n$ for all $n \gg 0$.
The proof of these assertion follows exactly as in the adic case with $M = A$; see \cite[p.\ 6-8]{S}.

\s\label{AtoA'} \textbf{Flat base-change:} In our paper we do many flat changes of rings.
 The general set up we consider is
as follows:

 Let $\phi \colon (A,\m) \rt (A',\m')$ be a flat local ring homomorphism
 with $\m A' = \m'$. Set $\A' = \A A'$ and if
 $N$ is an $A$-module set $N' = N\otimes_A A'$. Set $k = A/\m$ and $k' = A'/\m'$.
If $\eF = \{ M_n \}_{n \in \Z}$ is an $\A$-stable filtration on $M$ then set ${\eF}' = \{ M_n'\}_{n \in \Z}$. It is elementary to show that
${\eF}'$ is an $\A'$-stable filtration on $M'$.

 \textbf{Properties preserved during our flat base-changes:}

\begin{enumerate}[\rm (1)]
\item
$\ell_A(N) = \ell_{A'}(N')$. So
 $\ell_A(M/ M_n)  = \ell_{A'}(M'/M_n')$  for all $n \geq 0$.
\item
$\dim M = \dim M'$ and  $\grade(K,M) = \grade(KA',M')$ for any ideal $K$ of $A$.
\item
$\depth G_{\eF}(M) = \depth G_{{\eF}'}(M')$.
\end{enumerate}

\textbf{Specific flat base-changes:}

\begin{enumerate}[\rm (a)]
\item
$A' = A[X]_S$ where $S =  A[X]\setminus \m A[X]$.
The maximal ideal of $A'$ is $\n = \m A'$.
The residue
field of $A'$ is $k' = k(X)$. Notice that $k'$ is infinite.
\item
$A' =  \widehat{A}$  the completion of $A$ \wrt \ the maximal ideal.
\item
Applying (a) and (b) successively ensures that $A'$ is complete with $k'$ infinite.
\item
$A' = A[X_1,\ldots,X_n]_S$ where $S =  A[X_1,\ldots,X_n]\setminus \m A[X_1,\ldots,X_n]$.
The maximal ideal of $A'$ is $\n = \m A'$.
The residue
field of $A'$ is $l = k(X_1,\ldots,X_n)$. Notice that if $I$ is integrally closed then $I'$ is
also integrally closed.
When $\dim A \geq 2$, $A$ is  \CM \ and $I$ is $\m$-primary, say $I = (a_1,\cdots,a_n)$ then in \cite[Corollary 2]{Ciu} it is proved that
the element $y = \sum_{i = 1}^{n} a_iX_i$ in $A'$ is $I'$-superficial and
the $A'/(y)$ ideal $J = I'/(y)$ has the property that
$$J_n^* = \ov{((I^n)')^*} \quad \text{for all} \ n \gg 0.$$
\end{enumerate}

\s \label{lying-above} We will need the following result. Let $\phi \colon (B,\n) \rt (A, \m)$ be a local map such that $A$ is finitely generated $B$-module (via $\phi$). Consider
$R = B[X_1,\cdots,X_n]$ and $S = A[X_1,\cdots, X_n]$ and let $\psi \colon R \rt S$ be the map induced by $\phi$. Note $S$ is a finite $R$-module via $\psi$.
Let $P = \n R$ and $Q = \m S$.  Set $S_P = W^{-1}S$ where $W = R \setminus P$. Then $S_P = S_Q$. To see this note that $Q$ is the only prime in $S$ lying over $P$. The result follows from an exercise problem
in \cite[Exercise 9.1]{Ma}.

\s \label{redNO} We assume $k = A/\m$ is infinite. Let $\C = (x_1,\ldots,x_l)$ be  a minimal reduction of
$\A$ \wrt \ $M$.  We denote by  $\red_\C(\A, M) := \min \{ n  \mid \  \C \A^n M= \A^{n+1}M \} $
the \textit{reduction number} of $\A$ \wrt \ $\C$ and $M$. Let
 $$\red(\A, M) = \min \{ \red_\C(\A, M) \mid \C
\text{\ is  a minimal reduction of \ }  \A \}$$
 be the \textit{reduction number } of $M$ \wrt \ $\A$. Set
$\red_\A(A) = \red(\A, A)$.

\s \label{gen-cm} Let $R= \bigoplus_{n\geq 0}R_n$ be a standard graded algebra over an Artin local ring $(R_0,\m_0)$. Set $\M = \m_0 \oplus (\bigoplus_{n\geq 1}R_n)$. We say that $R$ is generalized \CM \ if $H^i_{\M}(R)$ has finite length for $i < \dim R$.

\s \emph{CI-approximation:} Let $(A,\m)$ be a  local ring and let $\A$ be an $\m$-primary ideal. By a CI-approximation of $A$ \wrt \ $\A$ we mean a  local ring $(B, \n)$ and an ideal $\B$ of $B$ and a local ring homomorphism $\phi \colon B \rt A$ such that
\begin{enumerate}
  \item $A$ is a finite $B$-module (via $\phi$).
  \item $\dim B = \dim A$.
  \item $\B A = \A$.
  \item $B$ and $G_\B(B)$ are complete intersections.
\end{enumerate}
We say $(B,\n, \B, \phi)$ is a CI-approximation of $(A,\m,\A)$.
In \cite[1.5, 5.4]{Pu-nor} we proved the following result
\begin{theorem}
  \label{ci-approx} Let $(A,\m)$ be a complete local  ring and let $\A = (a_1, \ldots, a_s)$ be an $\m$-primary ideal.  Then there exists a CI-approximation $(B,\n, \B, \phi)$ of $(A,\m, \A)$ with $\B = (b_1, \ldots, b_s)$ and $\phi(b_i) = a_i$. Furthermore if
  $\mu(\A) > \dim A$ and $r \geq 1$ an integer then  there exists   a CI-approximation $(B_r,\n, \B_r, \phi)$ of $(A,\m, \A)$ with $\red_{\B_r}(B_r) >  r$.
\end{theorem}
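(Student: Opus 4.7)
The strategy is to realise $B$ as the quotient of a regular complete local ring $T$ by an ideal generated by a regular sequence whose initial forms also form a regular sequence in $G_\bX(T)$; Valabrega--Valla will then force both $B$ and $G_\B(B)$ to be complete intersections simultaneously.

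After the flat base change $A \rt A[X]_{\m A[X]}$ of \ref{AtoA'}(a) I may assume that $k = A/\m$ is infinite. By Cohen's structure theorem choose a regular complete local ring $(T,\bX)$ of dimension $s$ whose associated graded ring $G_\bX(T)$ is a polynomial ring in $s$ indeterminates over $k$, together with a continuous local homomorphism $\pi \colon T \rt A$ sending a regular system of parameters of $T$ to $a_1,\ldots,a_s$ (for instance $T = k[[X_1,\ldots,X_s]]$ in the equicharacteristic case, with the standard modification using a Cohen ring in mixed characteristic). Let $J = \ker \pi$ and $A_0 = T/J \subseteq A$. Since $A/\A$ has finite length over $k$ and $\A = \bX A_0$, the ring $A$ is module-finite over $A_0$, so $\dim A_0 = d$ and the initial ideal $\operatorname{in}(J) \subseteq G_\bX(T)$, for which $G_\bX(T)/\operatorname{in}(J) \cong G_{\bX A_0}(A_0)$, has height $s-d$.

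Now I apply iterated graded prime avoidance inside the homogeneous components of $\operatorname{in}(J)$ (valid because $k$ is infinite) to extract homogeneous elements $g_1,\ldots,g_{s-d} \in \operatorname{in}(J)$ forming a regular sequence in $G_\bX(T)$. Each $g_j$ is the initial form $f_j^*$ of some $f_j \in J$, since any nonzero homogeneous element of $\operatorname{in}(J)$ of degree $m$ is the initial form of an element of $J \cap \bX^m$. By Valabrega--Valla, $f_1,\ldots,f_{s-d}$ is a $T$-regular sequence and the initial ideal of $I := (f_1,\ldots,f_{s-d})$ equals $(g_1,\ldots,g_{s-d})$. Setting $B = T/I$, $\B = \bX B$, and letting $\phi \colon B \rt A$ be the map induced by $\pi$ (well defined because $I \subseteq J$), all four axioms of a CI-approximation are verified at once: $\dim B = d$; $B$ is a complete intersection because $T$ is regular and $I$ is generated by a $T$-regular sequence; $G_\B(B) \cong G_\bX(T)/(g_1,\ldots,g_{s-d})$ is a complete intersection; $\B A = \A$ with $\phi(b_i) = a_i$; and the factorisation $B \twoheadrightarrow A_0 \hookrightarrow A$ makes $A$ finite over $B$.

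For the addendum, $s = \mu(\A) > d$ forces $\operatorname{in}(J)$ to be nonzero of positive height, so $\operatorname{in}(J) \cap \bX^N \neq 0$ for every $N$. I build $g_1,\ldots,g_{s-d}$ inductively, at each stage multiplying a candidate homogeneous element of $\operatorname{in}(J)$ by a sufficiently high power of an $X_i$ avoiding the (finitely many) minimal primes of the previous partial sequence; this pushes the initial degrees $m_j = \deg g_j$ as high as desired. A minimal reduction of $\B_r$ in $B_r = T/(f_1,\ldots,f_{s-d})$ (which exists because $k$ is infinite) reduces $G_{\B_r}(B_r)$ modulo a linear system of parameters to an Artinian graded complete intersection of socle degree $\sum_{j=1}^{s-d}(m_j-1)$, so $\red_{\B_r}(B_r) \geq \sum_j(m_j-1)$, which exceeds $r$ once the $m_j$ are chosen large enough.

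The main obstacle is arranging the simultaneous regular-sequence conditions on $f_1,\ldots,f_{s-d}$ and on their initial forms while retaining enough freedom in the degrees to make $\red_{\B_r}(B_r)$ large. Valabrega--Valla collapses both conditions into the single graded condition inside $\operatorname{in}(J)$, and graded prime avoidance over the infinite residue field supplies the flexibility to meet the regularity and the degree constraints at the same time.
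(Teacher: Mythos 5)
The paper does not actually prove Theorem \ref{ci-approx}; it is cited to \cite[1.5, 5.4]{Pu-nor}, so one cannot compare line-by-line. Your construction in the equicharacteristic case is the natural one and is correct: take $T = k[[X_1,\dots,X_s]]$ with $X_i\mapsto a_i$, let $J=\ker(T\to A)$ so that $\htt\,\operatorname{in}(J)=s-d$, extract by graded prime avoidance a homogeneous $G_{\bX}(T)$-regular sequence $g_1,\dots,g_{s-d}\in\operatorname{in}(J)$, lift to $f_1,\dots,f_{s-d}\in J$, and use the fact that a sequence whose initial forms are a regular sequence is itself regular with $\operatorname{in}(f_1,\dots,f_{s-d})=(g_1,\dots,g_{s-d})$, so that $B=T/(f_1,\dots,f_{s-d})$, $\B=\m_T B$ give a CI-approximation, and the reduction-number estimate via the socle degree $\sum(m_j-1)$ of the Artinian complete intersection quotient is the right one.

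The genuine gap is the mixed characteristic case, which you dispatch with ``the standard modification using a Cohen ring.'' That modification is not standard here. A complete regular local ring $T$ of dimension $s$ with residue field $k$ of characteristic $p>0$ and $\operatorname{char}T=0$ contains a Cohen ring $V$, and any regular system of parameters of $T$ has $p$ (up to associates) among its members; hence a local map $\pi\colon T\to A$ sending a regular system of parameters to $a_1,\dots,a_s$ forces $p\cdot 1_A$ to be one of the $a_i$, which is generally false. If instead one takes $T=V[[X_1,\dots,X_s]]$ of dimension $s+1$ and the sub-maximal ideal $\q=(X_1,\dots,X_s)$, then $\htt\operatorname{in}_\q(J)=s+1-d$ and one must kill $s+1-d$ forms, but now $G_\q(T)\cong V[X_1,\dots,X_s]$ has non-Artinian degree-zero part, and one checks easily that $\B=\q B$ is $\m_B$-primary (equivalently $G_\B(B)$ is $*$-local with Artinian degree-zero part, which the applications in Section~8 use through graded Matlis/local duality over $G_\q(T)$ and $\R_\q(T)$) only when $p\cdot 1_A\in\A$; if $p\notin\A$ this route still does not close. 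So the mixed characteristic case needs an argument you have not supplied, and it is not obtainable by cosmetic changes to the equicharacteristic one.

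A minor correction to the addendum: you should multiply by a sufficiently general linear form rather than by a power of a fixed coordinate $X_i$, since each $X_i$ could lie in one of the minimal primes of $(g_1,\dots,g_{j-1})$; over an infinite residue field a general linear form avoids all of them, and the rest of your degree-pushing argument is fine because $\htt(g_1,\dots,g_{j-1})=j-1<s-d=\htt\operatorname{in}(J)$ ensures $\operatorname{in}(J)$ is not contained in any of those primes.
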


\section{Proof of Theorem \ref{dual}}
In this section we give a proof of Theorem \ref{dual}. We first prove:

\begin{theorem}\label{repr}
Let $(A,\m)$ be a local  ring and let $\A$ be an ideal in $A$.
Let $\Ee$ be  a finitely generated $\ra$-module such that $t^{-1}$ is $\Ee$-regular.
Then there exists a finitely generated $A$-module $M$ and an $\A$-stable filtration $\eF$ on $M$ such that $\eR(\eF, M) \cong \Ee$.
\end{theorem}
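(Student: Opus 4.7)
\emph{Plan.} The strategy is to reconstruct $M$ and $\eF$ by localizing $\Ee$ at $t^{-1}$ and taking the degree-zero part. View $\ra = \bigoplus_{n \in \Z} \A^n$ as the graded subring $\ra_n = \A^n t^n$ of the Laurent polynomial ring $A[t, t^{-1}]$ (using the convention $\A^n = A$ for $n \leq 0$), so that $t^{-1}$ sits canonically in $\ra_{-1}$, and a direct check gives $\ra[(t^{-1})^{-1}] = A[t,t^{-1}]$. Since $t^{-1}$ is $\Ee$-regular, the localization map $\Ee \hookrightarrow \Ee' := \Ee \otimes_\ra A[t,t^{-1}]$ is injective. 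Because $t$ is a unit of $A[t,t^{-1}]$, multiplication by $t^{-n}$ induces a graded isomorphism $(\Ee')_n \cong (\Ee')_0 =: M$; if $\Ee$ is generated over $\ra$ by homogeneous elements $x_i$ of degree $d_i$, then the elements $t^{-d_i}x_i$ generate $M$ over $A$, so $M$ is finitely generated.

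For $n \in \Z$ define $M_n \subseteq M$ as the image of $\Ee_n$ under the composition $\Ee_n \hookrightarrow (\Ee')_n \xrightarrow{\cdot t^{-n}} M$. I would then check that $\eF = \{M_n\}_{n \in \Z}$ is an $\A$-stable filtration on $M$. The decreasing property $M_{n+1} \subseteq M_n$ comes from $t^{-1} \Ee_{n+1} \subseteq \Ee_n$. For $\A M_n \subseteq M_{n+1}$: given $a \in \A$, we have $at \in \ra_1$ with $at \cdot \Ee_n \subseteq \Ee_{n+1}$; translating by $t^{-(n+1)}$ and using $at \cdot t^{-(n+1)} = a \cdot t^{-n}$ yields $a \cdot M_n \subseteq M_{n+1}$. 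Stability $\A M_n = M_{n+1}$ for $n \gg 0$ follows from finite generation of $\Ee$, which gives $\Ee_{n+1} = \ra_1 \cdot \Ee_n$ once $n$ exceeds the top generating degree. Finally, the bijections $\Ee_n \to M_n$ (injective by $t^{-1}$-regularity, surjective by construction) assemble into a graded $\ra$-module isomorphism $\Ee \cong \eR(\eF, M)$; $\ra$-equivariance amounts to the identity $(at^m \cdot x)\,t^{-(n+m)} = a \cdot (x\,t^{-n})$ inside $\Ee'$, for $a \in \A^m$ and $x \in \Ee_n$, which is immediate.

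The main obstacle is the condition $M_n = M$ for $n \ll 0$, i.e., that $\Ee_n \to (\Ee')_n$ is surjective for $n$ sufficiently negative. The key observation is that for $n \leq \min_i d_i - 1$ one has $\Ee_n = \sum_i A \cdot (t^{n-d_i} x_i)$, since $n - d_i \leq -1$ forces $\ra_{n-d_i} = A \cdot t^{n-d_i}$; then $t^{-1} \colon \Ee_{n+1} \to \Ee_n$ sends the generator $t^{n+1-d_i}x_i$ to $t^{n-d_i}x_i$ and so is surjective, and combined with injectivity from $t^{-1}$-regularity gives $t^{-1} \colon \Ee_{n+1} \xrightarrow{\sim} \Ee_n$ for $n \ll 0$. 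Iterating, every representative $y\, t^k \in (\Ee')_n$ with $y \in \Ee_{n-k}$, $k \geq 0$, can be rewritten as an element of $\Ee_n$, so $\Ee_n = (\Ee')_n$ and hence $M_n = M$ for $n \ll 0$. This last step is where the interplay between finite generation of $\Ee$ and the fact that $\ra$ equals $A$ in all nonpositive degrees is used in an essential way.
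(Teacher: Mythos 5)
Your proof is correct and arrives at the same construction as the paper: $M$ is the common stable value of the $\Ee_n$ for $n\ll 0$ and $M_n$ is the image of $\Ee_n$ under multiplication by $t^{-n}$, followed by the direct verification that this isomorphism respects the $\ra$-action. The only real difference is how the eventual stabilization (bijectivity of $t^{-1}\colon \Ee_{n+1}\to\Ee_n$ for $n\ll 0$) is established: the paper notes that $\Ee/t^{-1}\Ee$ is a finitely generated module over the nonnegatively graded ring $G_\A(A)$, so it vanishes in degrees $\ll 0$, giving the needed surjectivity at once; you instead obtain surjectivity by a direct computation with a finite homogeneous generating set $x_1,\dots,x_r$, observing that $\Ee_n=\sum_i A\,t^{n-d_i}x_i$ once $n\leq\min_i d_i -1$. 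Both work; the paper's route is slightly slicker, yours is more explicit, and the localization $\Ee'=\Ee\otimes_\ra A[t,t^{-1}]$ you introduce is a tidy bookkeeping device but carries no extra content beyond the paper's direct use of the inclusions $\Ee_n\hookrightarrow\Ee_{n-1}$.
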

\begin{proof}
Let $\Ee = \bigoplus_{n \in \Z}\Ee_n$ where $\Ee_n$ are finitely generated $A$-modules. As $t^{-1}$ is $\Ee$-regular we have an injective map $\Ee(+1) \xrightarrow{t^{-1}} \Ee$. It follows that we may consider $\Ee_{n}$ to be an $A$-submodule of $\Ee_{n-1}$ for all $n \in \Z$. We note that $X = \Ee/t^{-1}\Ee$ is a finitely generated $G_\A(A)$-module and so $X_n = 0$ for all $n \ll 0$.
After shifting we may assume $X_n = 0$ for all $n \leq -1$ and $X_0 \neq 0$. Thus it follows that $\Ee_n = \Ee_0$ for $n \leq -1$ and $\Ee_1 \neq \Ee_0$.

Set $M = \Ee_0$. Set $M_n = \image(\Ee_n \xrightarrow{t^{-n}} \Ee_0)$ for $n \geq 1$ and $M_n = M$ for $n \leq 0$. It is clear that $M_n \subseteq M_{n-1}$ for all $n \in \Z$. We note that $M_n = M$ for $n < 0$ and $M_1 \neq M_0$.
We first show that $\eF = \{ M_n \}_{n \in \Z}$ is an $\A$-filtration on $M$. Let $x \in \A$. Set $u = xt \in \ra_1$. Let $\alpha \in M_n$.
If $n \leq -1$ then note $x\alpha \in M_{n+1}$. Let $n \geq 0$.
Let $\beta \in \Ee_n$ such that $t^{-n}\beta = \alpha$. Then note $t^{-n-1}(u\beta) = x\alpha$. So $x\alpha \in M_{n+1}$. Thus $\eF$ is an $\A$-filtration on $M$.

Define $\Psi \colon \eE \rt  \eR(\eF, M)$ as follows. Set $\Psi_n = $ identity on $\Ee_0 = M$ for $n \leq 0$. For $n \geq 1$ set $\Psi_n(\alpha) = t^{-n}(\alpha)$.
Clearly $\Psi$ is a bijective $A$-linear map.
 We show $\Psi$ is $\ra$-linear.
Clearly $\Psi$ is additive. Also it is clear that $\Psi(t^{-1}\alpha) = t^{-1}\Psi(\alpha)$ for $\alpha \in \Ee_n$.
Let $x \in \A$. Set $u = xt \in \ra_1$. Let $\alpha \in \Ee_n$. If $n \leq -1$ then it is clear that $\Psi_{n+1}(u\alpha) = u\alpha = u\Psi_n(\alpha)$.
For $n \geq 0$ notice
\[
\Psi_{n+1}(u\alpha) = t^{-n -1}(u\alpha) = t^{-n}(x\alpha) = xt^{-n}(\alpha) = u\Psi_n(\alpha).
\]
Thus $\Psi$ is a $\ra$-linear isomorphism. It follows that $\eF$ is an $\A$-stable filtration on $M$.
\end{proof}

We now give
\begin{proof}[Proof of Theorem \ref{dual}]
We may assume  (after possibly shifting) that $\eG_n = M$ for all $n \leq 0$ and $\eG_1 \neq M$. We may also assume that $\A \eG_n = \eG_{n+1}$ for all $n \geq n_0$.

We first show that
\[
\Hom_A(M, A) = 0  \quad \text{if and only if} \quad \Hs_\R(\eR(\eG, M),\ra ) = 0.
\]
First assume that $\Hom_A(M, A) = 0$. Then $\grade \ann_A M > 0$. Let $c \in \ann_A M$ be $A$-regular. It is elementary to show that $ct^{-1}$ is $\ra$-regular and that $ct^{-1} \in \ann_{\ra} \eR(\eG, M)$. Thus $\Hs_\R(\eR(\eG, M),\ra ) = 0$.

Conversely assume  $\Hs_\R(\eR(\eG, M),\ra ) = 0$. Let \\ $u = at^l \in \ann_{\ra} \eR(\eG, M)$ be $\ra$-regular. After possibly multiplying with sufficiently high power of $t^{-1}$ we may assume that $l \leq -1$. Then note that $a$ is necessarily $A$-regular. Furthermore if $m \in M$ then as $u (m t^0) = 0$ it follows that $a m = 0$. So $a \in \ann_A M$. Thus $\Hom_A(M, A) = 0$.

Thus result holds if $\Hom_A(M, A) = 0$. Next assume that $\Hom_A(M, A) \neq 0$. Equivalently $\Hs_\R(\eR(\eG, M),\ra ) \neq 0$. Note $t^{-1}$ is
$\Hs_\R(\eR(\eG, M),\ra )$-regular. So by \ref{repr} there exists a finitely generated $A$-module $L$ and a $\A$-stable filtration on $L$ such that $\eR(\eF, L) \cong \Hs_\R(\eR(\eG, M),\ra )$.

 Assume $\eF_n = L$ for $n \leq m$. Fix $c \geq \max\{ -m, n_0 +1 \}$. Let \\
 $\psi \in \Hs_\R(\eR(\eG, M),\ra )_{-c}$. Then note $\psi(Mt^0) \subseteq At^{-c}$. So
 we have a well defined $A$-linear map $\psi^\sharp \colon M \rt A$.
 Consider the map
 \begin{align*}
 \theta \colon \Hs_\R(\eR(\eG, M),\ra )_{-c} &\rt \Hom_A(M, A), \\
 \psi &\mapsto \psi^\sharp.
 \end{align*}
 Clearly $\theta$ is $A$-linear. We show $\theta$ is an isomorphism.

 We first show $\theta$ is injective. Suppose $\theta(\psi) = \psi^\sharp = 0$. Then $\psi(Mt^0) = 0$. Let $at^{-n} \in \eG_{-n}$ for $n > 0$. Then
 \[
 \psi(at^{-n}) = \psi(at^0)t^{-n} = 0.
 \]
Let $bt^n \in \eG_{n}$ for $n > 0$. Then note
\[
t^{-n}\psi(bt^{n})  = \psi(bt^0) = 0.
\]
It follows that $\psi(bt^n) = 0$. Thus $\psi = 0$. Therefore $\theta$ is injective.

 Next we show $\theta$ is surjective. Let $f \in \Hom_A(M, A)$. \\
 Claim $f(M_m) \subseteq \A^{m-c}$ for all $m \in \Z$.\\
This is trivial if $m \leq c$. We prove the result for $m \geq c$ by induction on $m$.
For $m = c$ this is obvious. Assume the result for $k \geq c$ and prove for $k+1$.
Note $M_{k+1} = \A M_k$. So we have
\[
f(M_{k+1}) \subseteq \A f(M_k)\subseteq \A \A^{k-c} = \A^{k+1 -c}.
\]
Thus by induction Claim-1 follows.

Define $\psi_f \colon \eR(\eG, M) \rt \ra$ by
$\psi_f(a t^m) = f(a)t^{m-c}$. By Claim-1, $\psi_f$ is well-defined. Also clearly $\psi_f$ is $A$-linear and $\psi_f^\sharp = f$. It remains to prove $\psi_f$ is $\ra$-linear.
Let $u = at^m \in \eG_m$. Then we have
\[
\psi_f(t^{-1}u) = f(a)t^{m-1-c} = t^{-1}\psi_f(u).
\]
Let $x \in \A$. Then we have
\[
\psi_f(xt u) = f(xa)t^{m+1-c} = xf(a)t^{m+1-c}  = xtf(a)t^{m-c} = xt\psi_f(u).
\]
Thus $\psi_f$ is $\ra$-linear. As $\theta(\psi_f) = f$ it follows that $\theta$ is surjective.
So we have
\[
L \ \cong \  \Hs_\R(\eR(\eG, M),\ra )_{-c} \ \stackrel{\theta}{\cong} \ \Hom_A(M, A).
\]
The result follows.
\end{proof}
\section{Proof of Theorem \ref{dual-Ver}}
In this section we give proof of Theorem \ref{dual-Ver}.
We first make a few observations.
\s \label{maps}Let $M$ be an  $A$-module. Let $\eG$ be an $\A$-stable filtration on $M$. Assume $\eG_n = M$ for all $n \leq 0$ and $\eG_{1} \neq M$. It is easily verified that $\phi \in \Hs_\R(\eR(\eG, M),\ra )_{c}$ determines $A$-linear maps $\psi_i \colon M_i \rt A$ for all $i \in \Z$
defined as $\psi(mt^i) = \psi_i(m)t^{c+i}$ (for $m \in M_i$). Furthermore as $\psi$ is $\ra$-linear the maps $\psi_i $ for $i \in \Z$ satisfy the following properties
\begin{enumerate}
\item $\psi_i(M_i) \subseteq \A^{c+ i}$.
\item Fix $i \in \Z$ and $m \in M_i$
\begin{enumerate}
\item
For all $j \geq 0$  and all $i \in \Z$ we have 
\[
\psi_i(m) = \psi_{i-j}( m).
\]
\item
Fix $l \geq 1$ and $x \in \A^l$. Then we have
\[
x\psi_i(m) = \psi_{i+l}(xm).
\]
\end{enumerate}
\end{enumerate}
\textit{Conversely} it is evident that if we have a collection $(\psi_i \colon M_i \rt A)_{i \in \Z}$ of $A$-linear maps which satisfy (1) and (2) then we have an $\ra$-linear map $\psi \in \Hs_\R(\eR(\eG, M),\ra )_{c}$ defined as $\psi(mt^i) = \psi_i(m)t^{c+i}$.

\begin{lemma}\label{d2}
Let $(A,\m)$ be a local ring and let $M$ be an $A$-module. Assume $\depth  A \geq 2$. Let $N$ be an $A$-submodule of $M$ such that $M/N$ has finite length as an $A$-module. Then the natural map $\Hom_A(M, A) \rt \Hom_A(N, A)$ is an isomorphism.
\end{lemma}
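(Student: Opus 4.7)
The plan is to apply $\Hom_A(-,A)$ to the short exact sequence
\[
0 \longrightarrow N \longrightarrow M \longrightarrow M/N \longrightarrow 0
\]
and read off the desired isomorphism from the resulting long exact sequence of $\Ext$. Explicitly, one obtains
\[
0 \to \Hom_A(M/N, A) \to \Hom_A(M,A) \to \Hom_A(N, A) \to \Ext^1_A(M/N, A),
\]
so it suffices to show that $\Hom_A(M/N, A) = 0$ and $\Ext^1_A(M/N, A) = 0$.

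The key input is that $M/N$ has finite length, hence is $\m$-torsion, so it admits a finite filtration whose successive quotients are all isomorphic to $k = A/\m$. By the long exact sequence of $\Ext$ and induction on the length of such a filtration, it is enough to verify that $\Ext^i_A(k, A) = 0$ for $i = 0, 1$. But this is exactly the standard characterization of depth: $\grade(\m, A) = \depth A \geq 2$ means precisely that $\Ext^i_A(k, A) = 0$ for $i < 2$. Hence $\Hom_A(M/N, A) = 0 = \Ext^1_A(M/N, A)$, and the restriction map is an isomorphism.

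The argument is essentially routine; there is no real obstacle. The only thing to double-check is that one really needs $\depth A \geq 2$ and not just $\depth A \geq 1$: indeed the vanishing of $\Hom_A(M/N, A)$ only requires $\depth A \geq 1$, but the vanishing of $\Ext^1_A(M/N, A)$ (needed for surjectivity of the restriction map) requires $\depth A \geq 2$, which matches the hypothesis.
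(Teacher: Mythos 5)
Your proof is correct and takes essentially the same route as the paper: apply $\Hom_A(-,A)$ to $0 \to N \to M \to M/N \to 0$ and observe that $\Ext^i_A(M/N,A)=0$ for $i=0,1$ because $M/N$ has finite length and $\depth A \geq 2$. The paper states this tersely; you have simply spelled out the standard reduction to $\Ext^i_A(k,A)=0$ for $i<2$.
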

\begin{proof}
We have an exact sequence $0 \rt N \rt M \rt M/N \rt 0$. The result follows by observing
$\Ext^i_A(M/N, A) = 0$ for $i = 0, 1$.
\end{proof}
Next we show:
\begin{theorem}\label{pre-ver}
Let $(A,\m)$ be a local ring  with $\depth A \geq 2$ and let $\A$ be an $\m$-primary ideal in $A$.
Let $M$ be  an $A$-module and let $\eG$ be an $\A$-stable filtration on
$M$ with $\eG_n = M$ for $n \leq 0$ and $\eG_1 \neq M$. Then  for all $c  \in \Z$
$$ \Hs_{\ra}(\eR(\eG, M),\ra )_{c} \cong \{ \psi \in \Hom_A(M, A) \mid \psi(M_i) \subseteq \A^{c + i}  \ \text{for all $i \in \Z$}\}. $$
\end{theorem}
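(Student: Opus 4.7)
The plan is to leverage the explicit correspondence recorded in \ref{maps}: an element $\phi \in \Hs_{\ra}(\eR(\eG, M), \ra)_c$ corresponds bijectively to a family $(\psi_i \colon M_i \rt A)_{i \in \Z}$ of $A$-linear maps satisfying conditions (1) and (2) there. The entire argument rests on the observation that any such family is already determined by its zero-th component $\psi_0 \in \Hom_A(M, A)$, and that the $\psi_0$'s which arise are exactly those satisfying $\psi_0(M_i) \subseteq \A^{c+i}$ for every $i \in \Z$.

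Define
\[
\Theta \colon \Hs_{\ra}(\eR(\eG, M), \ra)_c \rt \Hom_A(M, A), \qquad \phi \mapsto \psi_0.
\]
To see that $\Theta$ lands in the claimed subset, apply (2a) with $j = i$ when $i \geq 0$ to obtain $\psi_i = \psi_0|_{M_i}$, and apply the same identity at index $0$ with $j = -i$ when $i \leq 0$ to obtain $\psi_0 = \psi_i$ as maps $M \rt A$ (here $M_i = M$). In either case $\psi_0(M_i) = \psi_i(M_i)$, which by (1) is contained in $\A^{c+i}$.

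Injectivity of $\Theta$ is now immediate: if $\psi_0 = 0$, the preceding identities force $\psi_i = 0$ for every $i$, so $\phi = 0$. For surjectivity onto the indicated subset, take $\psi \in \Hom_A(M, A)$ with $\psi(M_i) \subseteq \A^{c+i}$ for all $i \in \Z$ and set $\psi_i := \psi|_{M_i}$ for $i \geq 0$ and $\psi_i := \psi$ for $i \leq 0$. Condition (1) is exactly the hypothesis on $\psi$; (2a) is tautological since every $\psi_i$ is simply $\psi$ restricted to $M_i$; and (2b) follows at once from the $A$-linearity of $\psi$, noting that $xm \in \A^l M_i \subseteq M_{i+l}$ whenever $x \in \A^l$ and $m \in M_i$ by $\A$-stability of $\eG$. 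The converse half of \ref{maps} then assembles this family into an $\ra$-linear $\phi \in \Hs_{\ra}(\eR(\eG, M), \ra)_c$ with $\Theta(\phi) = \psi$.

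The only delicate bookkeeping is the case-split $i \geq 0$ versus $i \leq 0$ when invoking (2a), but once this is carried out the $\A$-stability of $\eG$ and the $\ra$-linearity of $\phi$ do the rest. It is worth remarking that the hypothesis $\depth A \geq 2$ is not actually used in this particular statement; it becomes essential (through Lemma \ref{d2}) only when Theorem \ref{pre-ver} is combined with the Veronese comparison to establish Theorem \ref{dual-Ver}.
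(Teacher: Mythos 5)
Your proof is correct, and it is in fact cleaner than the paper's own argument. Both proofs rest on the correspondence from \ref{maps} between an element $\phi \in \Hs_{\ra}(\eR(\eG,M),\ra)_c$ and a family $(\psi_i\colon M_i \rt A)_{i\in\Z}$. The paper handles the direction from $\phi$ to a single element of $\Hom_A(M,A)$ by invoking Lemma \ref{d2} (which needs $\depth A \geq 2$) to extend each $\psi_i$ to some $\phi_i \in \Hom_A(M,A)$, and then shows $\phi_l = \phi_0$ for $l>0$ by choosing an $A$-regular element $x \in \A^l$ and cancelling. You instead read off directly from condition (2a), specialized to $j=i$ for $i\geq 0$ and applied at index $0$ with $j=-i$ for $i\leq 0$, that $\psi_i$ is literally the restriction of $\psi_0$ to $M_i$ in all cases; this makes the extension lemma and the regular-element cancellation superfluous. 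You are right that $\depth A \geq 2$ is not used in your argument (nor is $\depth A \geq 1$, which the paper also implicitly uses via the regular element), so your version of Theorem \ref{pre-ver} holds for any local ring and gives a marginal strengthening. The one small wording slip is attributing $\A^l M_i \subseteq M_{i+l}$ to ``$\A$-stability'' of $\eG$: this containment already follows from the definition of an $\A$-filtration, not from stability. That does not affect the correctness of the proof.
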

\begin{proof}
  Set $W  = \{ \psi \in \Hom_A(M, A) \mid \psi(M_i) \subseteq \A^{c + i} \}$. If $\psi \in W$ then let $\psi_i$ be the restriction of $\psi$ on $M_i$. Then $(\psi_i)_{i \in \Z}$ satisfies properties (1), (2) in \ref{maps}. So we have a map $\psi^\sharp \in \Hs_{\ra}(\eR(\eG, M),\ra )_{c} $.

  Conversely if $\psi^\sharp  \in \Hs_{\ra}(\eR(\eG, M),\ra )_{c} $ then consider the maps \\ $\psi_i \colon M_i \rt A$ defined by for $m \in M_i$ we have $\psi^\sharp(mt^i) = \psi_i(m)t^{c + i}$. Then $(\psi_i)_{i \in \Z}$ satisfies properties (1) and (2)  in \ref{maps}. By \ref{d2} we may consider $\psi_i$ to be a restriction of $\phi_i \in \Hom_A(M, A)$ for all $i \in \Z$. We show $\phi_i = \phi_0$.
  For $j > 0$ by (1) we have $\psi_{-j}(m) = \psi_0(m)$ for $m \in M = M_0 = M_{-j}$. So $\phi_0 = \phi_{-j}$ for all $j > 0$.
  Now let $l > 0$. Let $x \in \A^l$ be $A$-regular. Then
  by (2) we have
  $x\psi_0(m) = \psi_l(xm)$. But $\psi_l(xm) = \phi_l(xm) = x\phi_l(m)$. It follows that $\phi_l = \phi_0$.
  The result follows.
\end{proof}
We now give:
\begin{proof}[Proof of Theorem \ref{dual-Ver}]
By \ref{pre-ver} we have
\begin{align*}
  U &= \Hs_{\ra}(\eR(\eG, M),\ra )^{<l>}_n  =  \Hs_{\ra}(\eR(\eG, M),\ra )_{nl} \\
   &=  \{ \psi \in \Hom_A(M, A) \mid \psi(M_i) \subseteq \A^{nl + i} \text{for all $i \in \Z$} \} ,
\end{align*}
and
\begin{align*}
  V &= \Hs_{\mathcal{R}_{\A^l}(A)}(\eR(\eG^{<l>}, M),\mathcal{R}_{\A^l}(A) )_n\\
   &= \{ \psi \in \Hom_A(M, A) \mid \psi(M_{il}) \subseteq \A^{nl + il} \text{for all $i \in \Z$ } \}.
\end{align*}
Clearly $U \subseteq V$.

 Conversely let $f \in V$. Then $f(M = M_0) \subseteq  \A^{nl}$. Then note for $j < 0$ we have $M_j = M$ and $f(M_j) \subseteq \A^{nl + j}$.
 Now assume $j > 0$. Let $j + s = lr$. Let $u \in M_j$. 
 Let $x \in \A \setminus \A^2$ be such that its initial form $x^*$ is $G_\A(A)$-regular.   Also note $(\A^m \colon x) = \A^{m-1}$ for all $m \geq 1$.
 As $x^su \in M_{lr}$, so  $f(x^su) \in \A^{nl + lr}$. Thus $x^sf(u) \in \A^{nl + lr}$. So
 $$f(u) \in (\A^{nl + lr} \colon x^s) = \A^{nl + lr -s} = \A^{nl + j}.$$
 Thus $V \subseteq U$. The result follows.
\end{proof}

\section{The $L$-construction for filtration's}\label{Lprop}
In \cite{Pu5} we introduced a new technique to investigate problems relating to associated graded modules. This was done for the adic-case. The technique can be extended to filtrations and analogous properties can be proved (with the same proofs).
In this section we collect all the relevant results which we proved in \cite{Pu5} in the adic-case. Throughout this section
$(A,\m)$ is a  local ring with infinite residue field, $M$ is a \emph{\CM }\ module of dimension
$r \geq 1$ and $\A$ is an $\m$-primary ideal. Furthermore $\eF = \{ M_n \}_{n \in \Z}$ is an $\A$-stable filtration on $M$ with $M_n = M$ for $n \leq 0$. We \emph{do not} insist that $M_1 \neq M$.

\s \label{mod-struc} Set $\ta = A[\A t]$;  the Rees Algebra of $\A$.
Let $\Sc(\eF, M) = \bigoplus_{n \geq 0}M_n$ be the Rees module of $M$ with respect to $\eF$.
We note that  $M[t]/\Sc(\eF, M) = L^\eF(M)(-1)$.
Thus $L^\eF(M)$ is a $\ta$-module. Note $L^\eF(M)$ is NOT a finitely generated $\ta$-module.

 \s Set $\M = \m\oplus \ta_+$. Let $H^{i}(-) = H^{i}_{\M}(-)$ denote the $i^{th}$-local cohomology functor \wrt \ $\M$. Recall a graded $\ta$-module $E$ is said to be
*-Artinian if
every descending chain of graded submodules of $E$ terminates. For example if $U$ is a finitely generated $\Sc$-module then $H^{i}(U)$ is *-Artinian for all
$i \geq 0$.

\s \label{zero-lc} In \cite[4.7]{Pu5} we proved gave an explicit description of $H^0(L^\eF(M))$ in the adic case. In general we have $H^0(L^\eF(M))$  is an $\ta$-module of finite length. To see this let $x \in \A$ be $M$-superficial with respect to $\eF$.
Then by \ref{sup}   we have $(M_{n+1} \colon x) = M_n$ for all $n \gg 0$ say from $n \geq n_0$. Note $xt \in \M_1$ and so we have $H^0(L^\eF(M))_n = 0$ for all $n \geq n_0 + 1$.

\s \label{Artin}
For   $0 \leq i \leq  r - 1$
\begin{enumerate}[\rm (a)]
\item
$H^{i}(L^\eF(M))$ are  *-Artinian $\ta$-modules; see \cite[4.4]{Pu5}.
\item
$H^{i}(L^\eF(M))_n = 0$ for all $n \gg 0$; see \cite[1.10]{Pu5}.
\item
 $H^{i}(L^\eF(M))_n$  has finite length
for all $n \in \mathbb{Z}$; see \cite[6.4]{Pu5}.
\item
$\ell(H^{i}(L^\eF(M))_n)$  coincides with a polynomial for all $n \ll 0$; see \cite[6.4]{Pu5}.
\end{enumerate}

\s \label{I-FES} The natural maps $0\rt M_n/M_{n+1} \rt M/ M_{n+1}\rt M/M_n \rt 0 $ induce an exact
sequence of $\Sc$-modules
\begin{equation}
\label{dag}
0 \xar G_{\eF}(M) \xar L^\eF(M) \xrightarrow{\Pi} L^\eF(M)(-1) \xar 0.
\end{equation}
We call (\ref{dag}) \emph{the first fundamental exact sequence}.  We use (\ref{dag}) also to relate the local cohomology of $G_\eF(M)$ and $L^\eF(M)$.
\s \label{II-FES} Let $x$ be  $M$-superficial \wrt \ $\eF$ and set  $N = M/xM$ and $u =xt \in \ta_1$. Let $\ov{\eF} = \{ (M_n + xM)/xM \}_{n \in \Z}$ be the quotient filtration on $N$. Notice $$L^\eF(M)/u L^\eF(M) = L^{\ov{\eF}}(N)$$
For each $n \geq 1$ we have the following exact sequence of $A$-modules:
\begin{align*}
0 \xar \frac{M_{n+1}\colon x}{M_n} \xar \frac{M}{M_n} &\xrightarrow{\psi_n} \frac{M}{M_{n+1}} \xar \frac{N}{N_{n+1}} \xar 0, \\
\text{where} \quad \psi_n(m + M_n) &= xm + M_{n+1}.
\end{align*}
This sequence induces the following  exact sequence of $\ta$-modules:
\begin{equation}
\label{dagg}
0 \xar \Bcal^{\eF}(x,M) \xar L^{\eF}(M)(-1)\xrightarrow{\Psi_u} L^{\eF}(M) \xrightarrow{\rho^x}  L^{\ov{\eF}}(N)\xar 0,
\end{equation}
where $\Psi_u$ is left multiplication by $u$ and
\[
\Bcal^{\eF}(x,M) = \bigoplus_{n \geq 0}\frac{(M_{n+1}\colon_M x)}{M_n}.
\]
We call (\ref{dagg}) the \emph{second fundamental exact sequence. }

\s \label{long-mod} Notice  $\ell\left(\Bcal^{\eF}(x,M) \right) < \infty$. A standard trick yields the following long exact sequence connecting
the local cohomology of $L^\eF(M)$ and
$L^{\ov{\eF}}(N)$:
\begin{equation}
\label{longH}
\begin{split}
0 \xar \Bcal^{\eF}(x,M) &\xar H^{0}(L^{\eF}(M))(-1) \xar H^{0}(L^{\eF}(M)) \xar H^{0}(L^{\ov{\eF}}(N)) \\
                  &\xar H^{1}(L^{\eF}(M))(-1) \xar H^{1}(L^{\eF}(M)) \xar H^{1}(L^{\ov{\eF}}(N)) \\
                 & \cdots  \\
               \end{split}
\end{equation}

\s \label{Artin-vanish} We will use the following well-known result regarding *-Artinian modules quite often:

Let $V$ be a *-Artinian $\ta$-module.
\begin{enumerate}[\rm (a)]
\item
$V_n = 0$ for all $n \gg 0$.
\item
If $\psi \colon V(-1) \rt V$ is a monomorphism then $V = 0$.
\item
If $\phi \colon V \rt V(-1)$ is a monomorphism then $V = 0$.
\end{enumerate}

\begin{remark}\label{int-cls}
If $\eF$ is the integral closure filtration of $\A$ then it is well-known that $H^0(\GA^*) = 0$. By \ref{Artin-vanish} it follows that in this case $H^0(L^\eF(A)) = 0$.
\end{remark}

\s \label{asympCM}
Next we prove  s a slight extension of Theorem \ref{d2-h1L}. This was proved for $I$-adic filtrations in \cite[9.2]{Pu5}.
\begin{theorem}\label{depth2}
Let $(A,\m)$ be a Noetherian local ring with infinite residue field and let $\A$ be an $\m$-primary ideal. Let $M$ be a \CM \ $A$-module with $\dim M = r \geq 2$
Let $\eF = \{ M_n \}_{n \in \Z}$ be an $\A$-stable filtration with $M_n = M$ for $n \leq 0$. If the associated graded module of the Veronese $G(\eF^{< c >}, M)$ has depth $\geq 2$ for some $c \geq 1$ then
\begin{enumerate}[\rm (1)]
  \item $H^1(L^\eF(M))_n = 0$ for $n < 0$.
  \item $H^1(G_\eF(M))_n = 0$ for $n < 0$.
\end{enumerate}
\end{theorem}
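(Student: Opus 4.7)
The plan has two parts. First, $(2)$ follows immediately from $(1)$ via the first fundamental exact sequence \ref{I-FES}: applying $H^\bullet_\M$ and using that $L^\eF(M)_{n-1}=0$ for $n<0$, we obtain the injection $H^1(G_\eF(M))_n \hookrightarrow H^1(L^\eF(M))_n$ in degree $n<0$, and $(1)$ makes the right-hand side zero.

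For $(1)$, I would first dispose of the subcase $\depth G_\eF(M)\geq 2$. In this case the long exact sequence of \ref{I-FES} forces $H^1(L^\eF(M))\hookrightarrow H^1(L^\eF(M))(-1)$; the latter is $*$-Artinian by \ref{Artin}(a) (using $1\leq r-1$), and \ref{Artin-vanish}(c) gives the stronger conclusion $H^1(L^\eF(M))=0$. Applying this subcase to the Veronese filtration $\eF^{<c>}$ over $\mathcal{R}_{\A^c}(A)$ yields $H^i_{\M^{<c>}}(L^{\eF^{<c>}}(M))=0$ for $i=0,1$.

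To transfer the vanishing back to $\eF$, I would combine three ingredients. (i)~The Veronese/local-cohomology compatibility $H^1_\M(L^\eF(M))^{<c>} \cong H^1_{\M^{<c>}}(L^\eF(M)^{<c>})$, paired with the short exact sequence
\[
0 \to P \to L^{\eF^{<c>}}(M) \to L^\eF(M)^{<c>} \to 0, \qquad P_n = M_{cn+1}/M_{c(n+1)},
\]
reduces the vanishing of $H^1_\M(L^\eF(M))_n$ at negative multiples of $c$ to controlling $H^2_{\M^{<c>}}(P)$; the latter is analyzed via the finite filtration on $P$ whose successive quotients are shifted Veronese pieces of $G_\eF(M)$. (ii)~Pick an $M$-superficial $x\in\A$ for $\eF$ (possible as $k$ is infinite). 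The long exact sequence \ref{longH} from the second fundamental exact sequence, together with $L^{\bar\eF}(\bar M)_n=0$ for $n<0$, furnishes injections $\Psi_u^*\colon H^1(L^\eF(M))_{n-1} \hookrightarrow H^1(L^\eF(M))_n$ for every $n<0$, propagating vanishing from any negative multiple of $c$ downward. (iii)~Induction on $r=\dim M$ applied to $\bar M = M/xM$: after a Sally-type choice of $x$ ensuring $(x^c)^*$ is $G_{\eF^{<c>}}(M)$-regular so that $\depth G_{\bar\eF^{<c>}}(\bar M)\geq 2$, the inductive hypothesis upgrades the injections in (ii) to isomorphisms in each negative degree; combined with the Veronese vanishing this forces all negative-degree pieces of $H^1_\M(L^\eF(M))$ to vanish. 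The base case $r=2$ is handled by a direct argument exploiting the \CM \ hypothesis on $G_{\eF^{<c>}}(M)$.

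The main obstacle is the combination of (i) and (iii): the cohomological analysis of $P$ leverages the filtration by shifted Veronese pieces of $G_\eF(M)$ together with the \CM \ hypothesis, and the depth descent in (iii) demands a delicate Sally-type selection of the superficial $x$ so that the depth condition on $G_{\eF^{<c>}}(M)$ carries to $G_{\bar\eF^{<c>}}(\bar M)$.
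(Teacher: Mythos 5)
Your outline correctly identifies the auxiliary facts (the $*$-Artinian trick giving $H^i(L^{\eF^{<c>}}(M))=0$ for $i=0,1$, the second fundamental exact sequence yielding injections $H^1(L^\eF(M))_{n-1}\hookrightarrow H^1(L^\eF(M))_n$ for $n<0$, and the reduction of~(2) to~(1) via the first fundamental sequence), but the middle of the argument has a genuine gap that the paper's proof sidesteps with a one-line observation you missed.

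The short exact sequence you propose, $0\to P\to L^{\eF^{<c>}}(M)\to L^\eF(M)^{<c>}\to 0$ with $P_n=M_{cn+1}/M_{c(n+1)}$, is the wrong comparison: to conclude anything about $H^1(L^\eF(M))_{nc}$ from $H^1(L^{\eF^{<c>}}(M))_n=0$ you would need to control $H^2_{\M^{<c>}}(P)$ in negative degrees, and nothing in the hypotheses gives you that. The kernel $P$ is not of finite length, it is not governed by the depth hypothesis on $G_{\eF^{<c>}}(M)$, and your proposed finite filtration of $P$ by shifted Veronese pieces of $G_\eF(M)$ would have to be matched with cohomological control on $G_\eF(M)$ itself, which you do not have. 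Moreover, even granting~(i), you would only get vanishing of $H^1(L^\eF(M))_n$ for $n\le -c$, since the injections in~(ii) propagate downward; the degrees $-c+1,\dots,-1$ would have to come from~(iii). But the depth-descent you invoke there goes the wrong way: from $\depth G_{\eF^{<c>}}(M)\ge 2$ and a Sally-type regular element one can only conclude $\depth G_{\ov{\eF}^{<c>}}(N)\ge 1$, not $\ge 2$, so the inductive hypothesis does not apply to $N=M/xM$, and the claimed upgrade of injections to isomorphisms is unjustified.

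The paper avoids all of this with a single algebraic identity: for every $l\ge 1$,
\[
\bigl(L^\eF(M)(-1)\bigr)^{<l>} \;=\; \bigoplus_{n\ge 0} M/M_{nl} \;=\; L^{\eF^{<l>}}(M)(-1).
\]
Unlike the comparison between $L^\eF(M)^{<c>}$ and $L^{\eF^{<c>}}(M)$, the shift by $-1$ makes the two sides literally equal, so there is no kernel to analyze. Since Veronese commutes with local cohomology, $H^i(L^{\eF^{<c>}}(M))=0$ for $i=0,1$ immediately gives $H^i(L^\eF(M))_{nc-1}=0$ for all $n\in\Z$ and $i=0,1$; in particular $H^1(L^\eF(M))_{-1}=0$. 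The downward injections from~(ii) then finish the proof in one line, with no induction on $\dim M$ and no base case to worry about. I recommend you replace items~(i) and~(iii) of your proposal with this shift identity.
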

\begin{proof}
(1)  For all $l \geq 1$ we have
\[
\left(L^\eF(M))(-1)\right)^{<l>} = \bigoplus_{n \geq 0} M/M_{nl} =  L^{\eF^{<l>}}(M)(-1).
\]
As $\depth G_\eF^{< c >} (M) \geq 2$ it follows from  \ref{Artin}; (\ref{dag}) and \ref{Artin-vanish} that
$H^i(L^ {\eF^{<c>}}(M)) = 0$ for $i = 0,1$. As the Veronese functor commutes with local cohomology we get that
$H^i(L^\eF(M))_{nc-1} = 0$ for all $n \in \Z$. In particular $H^1(L^\eF(M))_{-1} = 0$. Let $x$ be $M$-superficial \wrt \ $\eF$.
Set $N = M/xM$. Then by  (\ref{longH}) we have  an exact sequence for all $n \in \Z$
\[
H^0(L^{\ov{\eF}}(N))_n \rt H^1(L^\eF(M))_{n-1} \rt H^1(L^\eF(M))_{n}.
\]
As $H^0(L^{\ov{\eF}}(N))_n = 0$ for $n < 0$ and $H^1(L(\eF, M))_{-1} = 0$;  an easy induction yields $H^1(L(\eF, M))_{n} = 0$ for $n < 0$.

(2) By  (\ref{dag}) and as $H^0(L^\eF(M))_n = 0$ for $n < 0$ we get that $H^1(G_\eF(M))_n = 0$ for $n < 0$.
\end{proof}

\section{A generalization of a result due to Elias and an application to asymptotic integral closure filtration}

By a result of Elias, \cite[2.2]{E}, it is known that $\depth G_{I^n}(A)$ is constant for $n \gg 0$ for any $\m$-primary ideal $I$. We have to extend this result to filtrations. Elias's proof regreatabbly does not extend to filtrations.  We restate Theorem \ref{asymp} for the convenience of the reader. In fact we prove it  for modules.
\begin{theorem}
  \label{asymp-m}
  Let $(A,\m)$ be a  local ring.  Let $\A$  be an $\m$-primary ideal in $A$.
Let $M$  be a \CM \ $A$-module of dimension $r$.
  Let $\eF = \{ M_n \}_{n\in \Z}$ be an $\A$-stable filtration on $M$ with $M_n = M$ for all $n \leq 0$.
  Then \\ $\depth G_{\eF^{<m>}}(M) $ is constant for
  $m \gg 0$.
\end{theorem}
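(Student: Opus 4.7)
The plan is to exploit the $L$-construction from Section \ref{Lprop} together with its Veronese-compatibility. After the standard flat base change \ref{AtoA'}(a) we may assume $A$ has infinite residue field. Set $X=L^{\eF}(M)$; a direct check from the definitions gives $(L^{\eF}(M)(-1))^{<m>}=L^{\eF^{<m>}}(M)(-1)$. Since local cohomology commutes with the Veronese functor, this yields the key identification
\[
H^i(L^{\eF^{<m>}}(M))_n \ =\ H^i(X)_{m(n+1)-1}\qquad(0\le i\le r-1,\ n\in\Z),
\]
where local cohomology is computed with respect to the appropriate maximal homogeneous ideal.

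First I would feed the first fundamental exact sequence (\ref{dag}) for $\eF^{<m>}$ into local cohomology. Writing $G^{(m)}=G_{\eF^{<m>}}(M)$ and $Z_m=L^{\eF^{<m>}}(M)$, this produces for each $i,n$ an exact piece
\[
H^{i-1}(Z_m)_{n-1}\xar H^i(G^{(m)})_n\xar H^i(Z_m)_n\xrightarrow{\pi^{(m)}_i} H^i(Z_m)_{n-1},
\]
and by the key identification, $\pi^{(m)}_i$ at degree $n$ is the $m$-fold composite of the canonical map $\pi_i\colon H^i(X)\to H^i(X)(-1)$ induced by $\Pi$ in (\ref{dag}), restricted to the graded piece in degree $m(n+1)-1$. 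The depth of $G^{(m)}$ is then read off from which $\pi^{(m)}_i$ are injective and which $\pi^{(m)}_{i-1}$ are surjective.

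By \ref{Artin} and \ref{zero-lc}, each $H^i(X)$ ($0\le i\le r-1$) is *-Artinian, vanishes in degrees $\gg 0$, has finite length in every graded piece (polynomial length for $n\ll 0$), and $H^0(X)$ has finite total length. The case $i=0$ follows at once: for $m\gg 0$ and $n\ge 0$ we get $H^0(Z_m)_n=H^0(X)_{m(n+1)-1}=0$, and since $G^{(m)}$ is non-negatively graded this forces $H^0(G^{(m)})=0$, giving $\depth G^{(m)}\ge 1$ unconditionally for large $m$. For $1\le i\le r-1$ I would argue that the iterated composites $\pi^{(m)}_i$ stabilize (in the sense suitable to deciding injectivity/surjectivity in each degree) as $m\to\infty$, using the *-Artinian property of $H^i(X)$ combined with the polynomial tail \ref{Artin}(d); consequently each $H^i(G^{(m)})$ is either eventually zero or eventually nonzero, and $\depth G_{\eF^{<m>}}(M)=\min\{i:H^i(G^{(m)})\ne 0\}$ is eventually constant.

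The main obstacle is the stabilization step for $1\le i\le r-1$: the module $H^i(X)$ is *-Artinian and vanishes in high degrees but typically has infinite support in negative degrees, so the iterates $\pi_i^{\circ m}$ are not controlled by a bounded amount of data. The natural tool here is graded Matlis duality, which converts $H^i(X)$ into a finitely generated graded module over the completed ring, where the dual iterates of $\pi_i$ become standard maps on a Noetherian module amenable to Artin--Rees stabilization; combined with the observation that $\pi^{(m)}_i$ only samples a single residue class of degrees modulo $m$ of $H^i(X)$, this yields the required eventual independence of $m$ and hence the theorem.
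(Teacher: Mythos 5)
Your setup — the Veronese identification $H^i(L^{\eF^{<m>}}(M))_n = H^i(L^\eF(M))_{m(n+1)-1}$, the first fundamental exact sequence, and the Artinian/polynomial-tail properties of $H^i(L)$ — is exactly the right starting material, and your treatment of the $i=0$ case is precisely what the paper does there. But the step you label the ``main obstacle'' is a genuine gap, and the Matlis duality plus Artin--Rees sketch does not close it. The structural map $\Pi\colon L\to L(-1)$ is not multiplication by an element of the Rees algebra, so its Matlis dual is not a map whose iterates are controlled by Artin--Rees; and more to the point, deciding injectivity and surjectivity of the restriction of $\Pi^{\circ m}$ to a single residue class mod $m$ of $H^i(L)$ is significantly harder than the problem actually requires.

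The paper avoids tracking the maps $\pi^{(m)}_i$ entirely. Set
\[
p = \min\{\, i < r : \ H^i(L^\eF(M))_{-1}\neq 0 \ \text{or}\ \ell(H^i(L^\eF(M)))=\infty \,\}.
\]
For $i<p$ the module $H^i(L^\eF(M))$ has finite length and vanishes in degree $-1$; the very same argument you used for $i=0$ (the $m$-th Veronese of a finite-length module concentrated away from degree $-1$ vanishes for $m$ large, since $H^i(L^{\eF^{<m>}}(M))_n = H^i(L^\eF(M))_{m(n+1)-1}$) gives $H^i(L^{\eF^{<m>}}(M))=0$ for all $i<p$ once $m$ is large, hence by the long exact sequence $H^i(G_{\eF^{<m>}}(M))=0$ for $i<p$ and $\depth G_{\eF^{<m>}}(M)\ge p$. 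For the reverse inequality one argues by contradiction: if $\depth G_{\eF^{<m>}}(M)>p$ for some large $m$, then \ref{Artin-vanish} forces $H^p(L^{\eF^{<m>}}(M))=0$, so $H^p(L^\eF(M))_{mn-1}=0$ for all $n\in\Z$; in particular $H^p(L^\eF(M))_{-1}=0$, and since $n\mapsto \ell(H^p(L^\eF(M))_n)$ agrees with a polynomial for $n\ll 0$ (by \ref{Artin}(d)) and has infinitely many zeros, it vanishes for $n\ll 0$, whence $\ell(H^p(L^\eF(M)))<\infty$ — contradicting the definition of $p$. So the polynomial tail is the right tool, but it is used in a contradiction for the upper bound, not to stabilize iterated maps; and the lower bound comes from vanishing of whole cohomology modules, not from analyzing injectivity and surjectivity of $\pi^{(m)}_i$. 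Your proposal would need the $i\ge 1$ stabilization step replaced by this two-sided argument to be complete.
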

\begin{proof}
We have nothing to show if $r = 0$. So assume $r \geq 1$.
Throughout we take local cohomology with respect to $\M$ the maximal homogeneous ideal of $\ta$.
Let
$$p = \min \{ i | i < r, \ \text{and either} \ H^i(L^\eF(M))_{-1} \neq 0 \ \text{or} \ \ell(H^i(L^\eF(M))) = \infty \}. $$
Note $p \geq 1$. For $i = 0, \ldots, p -1$ set
\begin{align*}
a_i(M) &= \inf \{ n \mid H^i(L^\eF(M))_{n} \neq 0 \}, \\
 b_i(M) &= \sup \{ n \mid H^i(L^\eF(M))_{n} \neq 0 \}.
\end{align*}
Furthermore set
\[
c_i(M) = \begin{cases}
       1   &\text{if} \  H^i(L^\eF(M)) = 0,\\
       b_i(M) - a_i(M) + 2  & \ \text{otherwise}.
               \end{cases}
 \]
 Let $amp(M) = \max \{ c_i  \mod 0 \leq i \leq p-1 \}$.
 We

 Claim: For all $m \geq amp(M)$ we have $\depth G_{\eF^{<m>}}(M) = p$.\\
 We note that for all $m \geq amp(M)$ we have for $i = 0, \ldots, p-1$
 \[
 H^i(L^{\eF^{<m>}}(M))(-1) \cong \left(  H^i(L^\eF(M))(-1)   \right)^{<m>} = 0.
 \]
 By \ref{dag} it follows that $\depth G_{\eF^{<m>}}(M) \geq p$.
If $p = r$ then we are done. Assume $p < r$.
 Suppose if possible 
 $\depth G_{\eF^{<m>}}(M) > p$ for some $m > amp(M)$. Then as
 \[
 0 = H^p(L^{\eF^{<m>}}(M)(-1)) \cong \left(  H^p(L^\eF(M))(-1)    \right)^{<m>},
 \]
 it follows that  $H^p(L^\eF(M))_{-1} = 0$ and $H^p(L^\eF(M))_{mn - 1} = 0$ for all $n \in \Z$. As $n \mapsto \ell(H^p(L^\eF(M))_{n}$ coincides with a polynomial for $n \ll 0$ (see
 \ref{Artin}(d)), it follows that $H^p(L^\eF(M))_{n} = 0$ for $n \ll 0$. Thus $\ell(H^p(L^\eF(M))) < \infty$. This contradicts the definition of $p$.
 The result follows.
\end{proof}
We will need the following result:
\begin{theorem}
  \label{gcm-N}
  Let $(A,\m)$ be a local ring.  Let $\A$  be an $\m$-primary ideal in $A$.
Let $M$  be a \CM \ $A$-module of dimension $r$.
  Let $\eF = \{ M_n \}_{n\in \Z}$ $\A$-stable filtration on $M$ with $M_n = M$ for all $n \leq 0$.
  If  $ G_{\eF^{<m>}}(M) $ is  \CM \ for some $m \geq 1$ then $G_\eF(M)$ is generalized \CM.
\end{theorem}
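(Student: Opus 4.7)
The plan is to show that $H^i_\M(G_\eF(M))$ has finite length for every $i < r$ by first establishing the stronger statement that each $H^i_\M(L^\eF(M))$ has finite length for $i \leq r-1$, and then reading off the conclusion from the first fundamental exact sequence.

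First I would exploit the hypothesis at the Veronese level. Applying the first fundamental exact sequence
\[
0 \xar G_{\eF^{<m>}}(M) \xar L^{\eF^{<m>}}(M) \xar L^{\eF^{<m>}}(M)(-1) \xar 0
\]
and using that $G_{\eF^{<m>}}(M)$ is \CM \ of dimension $r$ (so $H^i_\M=0$ for $i<r$), the associated long exact sequence in local cohomology yields, for every $i \leq r-1$, an injection
$H^i_\M(L^{\eF^{<m>}}(M)) \hookrightarrow H^i_\M(L^{\eF^{<m>}}(M))(-1)$.
Since $H^i_\M(L^{\eF^{<m>}}(M))$ is *-Artinian by \ref{Artin}(a), the vanishing principle \ref{Artin-vanish}(c) forces $H^i_\M(L^{\eF^{<m>}}(M)) = 0$ for all $i \leq r-1$.

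Next, I would transfer this vanishing back to $\eF$ via the Veronese functor. Using the identification $L^{\eF^{<m>}}(M)(-1) = (L^\eF(M)(-1))^{<m>}$, and the fact that the Veronese functor commutes with local cohomology, the vanishing from the previous paragraph becomes
\[
\bigl(H^i_\M(L^\eF(M))(-1)\bigr)^{<m>} = 0 \qquad \text{for all } i \leq r-1,
\]
which, read in degree $n$, says $H^i_\M(L^\eF(M))_{mn-1} = 0$ for every $n \in \Z$. Combined with \ref{Artin}(d), which guarantees that $n \mapsto \ell(H^i_\M(L^\eF(M))_n)$ agrees with a polynomial for $n \ll 0$, the existence of infinitely many zeros of this polynomial (at the values $mn-1$ with $n \ll 0$) forces $H^i_\M(L^\eF(M))_n = 0$ for $n \ll 0$. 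Together with the vanishing for $n \gg 0$ from \ref{Artin}(b) and the finiteness of each graded component from \ref{Artin}(c), this shows that $H^i_\M(L^\eF(M))$ has finite length for every $i \leq r-1$.

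Finally I would apply the first fundamental exact sequence for $\eF$ itself. Its long exact sequence sandwiches each $H^i_\M(G_\eF(M))$ (for $1 \leq i \leq r-1$) between a quotient of $H^{i-1}_\M(L^\eF(M))$ and a submodule of $H^i_\M(L^\eF(M))$, both of finite length by the previous step; and $H^0_\M(G_\eF(M))$ injects into $H^0_\M(L^\eF(M))$. Thus $H^i_\M(G_\eF(M))$ has finite length for every $i < r$, which is precisely the generalized Cohen--Macaulay condition. The main subtlety is step two, where the mismatch between the shift $(-1)$ and the Veronese index $m$ must be handled carefully; once the identity $L^{\eF^{<m>}}(M)(-1) = (L^\eF(M)(-1))^{<m>}$ is in hand, the rest is formal.
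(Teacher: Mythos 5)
Your proof is correct and follows essentially the same route as the paper: use the first fundamental exact sequence at the Veronese level together with *-Artinian vanishing to kill $H^i(L^{\eF^{<m>}}(M))$ for $i<r$, transfer back via the Veronese functor and the polynomial behavior of $\ell(H^i(L^\eF(M))_n)$ for $n \ll 0$ to get finite length, and then read off finite length of $H^i(G_\eF(M))$ from the fundamental sequence for $\eF$. The only cosmetic difference is that you spell out the monomorphism argument that the paper compresses into a citation of \ref{Artin-vanish} and \ref{dag}.
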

\begin{proof} We have nothing to prove if $r = 0$. So assume $r > 0$
By \ref{Artin-vanish} and \ref{dag} it follows that
$H^i(L^{\eF^{<m>}}(M)) = 0$ for $i = 0, \ldots, r-1$. We note that for $i < r$ we have
 \[
0 =  H^i(L^{\eF^{<m>}}(M)(-1)) \cong \left(  H^i(L^\eF(M))(-1)   \right)^{<m>}.
 \]
 It follows that $H^i(L^\eF(M))_{mn - 1} = 0$ for all $n \in \Z$ and for $i = 0, \ldots,r-1$.  Fix $i < r$. As $n \mapsto \ell(H^i(L^\eF(M))_{n}$ coincides with a polynomial for $n \ll 0$ (see
 \ref{Artin}(d)), it follows that $H^i(L^\eF(M))_{n} = 0$ for $n \ll 0$. Thus $\ell(H^i(L^\eF(M))) < \infty$ for all $i < r$. By \ref{dag} it follows that  $\ell(H^i(G_\eF(M))) < \infty$ for all $i < r$.  Thus $G_\eF(M)$ is generalized \CM.
\end{proof}

\s \label{normal}
Recall an ideal $I$ is said to be asymptotically normal if $I^n$ is integrally closed for all $n \gg 0$.
If $I$ is a asymptotically normal  $\m$-primary ideal  and $\dim A \geq 2$ then by a result of Huckaba and Huneke \cite[3.1]{HH}, $\depth G_{I^l}(A) \geq 2$
for all $l \gg 0$(also see \cite[7.3]{Pu5}). It  also follows from \cite[9.2]{Pu5} that in this case $H^1(L)_n = 0$ for $n < 0$. In particular $\ell(H^1(L)) < \infty$ (here $L = L^I(A)$). We need to generalize this result for asymptotic integral closure filtration's.
We give a proof of Theorem \ref{hh}. We restate it for the convenience of the reader.
 \begin{theorem}
  \label{hh-m}
  Let $(A,\m)$ be a \CM \ analytically unramified local ring with $\dim  A \geq 2$.  Let $\A$  be an $\m$-primary ideal in $A$. Let $\eF = \{ \A_n \}_{n\geq 0}$ be an asymptotic integral closure filtration with respect to $\A$.  Then $\depth G_{\eF^{<m>}}(A) \geq 2$ for all $m \gg 0$.
  \end{theorem}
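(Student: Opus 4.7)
The strategy is to use the Veronese functor to reduce to a Huckaba--Huneke type statement for the integral closure filtration of a single ideal, then adapt it using the generic-element theorem of Ciuperca recalled in \ref{AtoA'}(d). Choose $m_0$ such that $\A_n = (\A^n)^*$ for all $n \geq m_0$, and for $m \geq m_0$ set $J_m := \A_m = (\A^m)^*$. The standard identity $((I^*)^n)^* = (I^n)^*$ (immediate from $(I^*)^n \subseteq (I^n)^*$) gives $(J_m^n)^* = (\A^{mn})^* = \A_{mn}$ for every $n \geq 1$, so $\eF^{<m>}$ coincides with the integral closure filtration of $J_m$ and $G_{\eF^{<m>}}(A) = G^*_{J_m}(A)$. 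The theorem then reduces to showing $\depth G^*_{J_m}(A) \geq 2$ for every $m \geq m_0$.

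Fix such an $m$ and apply Ciuperca's construction \ref{AtoA'}(d) to $J_m$: there is a faithfully flat extension $A \hookrightarrow A' = A[X_1,\ldots,X_s]_S$ (preserving analytic unramifiedness, Cohen--Macaulayness, and the depth of the associated graded ring by \ref{AtoA'}(3)) and an element $y = \sum a_iX_i \in A'$ with $J_m = (a_1,\ldots,a_s)$ such that $y$ is $A'$-regular, $y$ is $J'_m$-superficial, and the image of $(J_m^n)^*$ in $A'/(y)$ equals $\bigl((J'_m/(y))^n\bigr)^*$ for all $n \gg 0$. By Remark \ref{int-cls}, $H^0_\M(G^*_{J'_m}(A')) = 0$; combined with superficiality and the Cohen--Macaulay property of $A'$, this forces the initial form $y^* \in G^*_{J'_m}(A')_1$ to be a nonzerodivisor, and supplies the Valabrega--Valla identity $yA' \cap ((J'_m)^{n+1})^* = y\cdot((J'_m)^n)^*$ for every $n \geq 1$. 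Writing $J''_m := J'_mA'/(y)$, these two identities combine to yield, for all $n \gg 0$,
\[
\bigl(G^*_{J'_m}(A')/y^*G^*_{J'_m}(A')\bigr)_n \;\cong\; G^*_{J''_m}(A'/(y))_n,
\]
so the two graded rings differ only by a module of finite length. Since $A'/(y)$ is Cohen--Macaulay of dimension $\geq 1$ and (by the genericity of $y$) analytically unramified, Remark \ref{int-cls} applied in $A'/(y)$ gives $H^0_\M(G^*_{J''_m}(A'/(y))) = 0$, hence depth $\geq 1$. The depth lemma promotes this to depth $\geq 1$ for the quotient $G^*_{J'_m}(A')/y^*G^*_{J'_m}(A')$, and regularity of $y^*$ upgrades it to depth $\geq 2$ for $G^*_{J'_m}(A')$; faithful flatness then yields the desired depth bound for $G^*_{J_m}(A)$.

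The main non-routine step is verifying that $A'/(y)$ is analytically unramified, as required for the second invocation of Remark \ref{int-cls}. This does not follow from analytic unramifiedness of $A$ alone---a quotient of an analytically unramified local ring by an arbitrary regular element can fail to be reduced upon completion---but is exactly the behaviour that Ciuperca's construction was designed to guarantee, and it is what underlies the paper's earlier observation that asymptotic integral closure filtrations ``behave much better than the integral closure filtration when we go mod a superficial element''. Should this point prove delicate, a back-up route is to bypass the second appeal to Remark \ref{int-cls} by controlling $H^0_\M$ of the quotient directly via the second fundamental exact sequence \eqref{dagg} applied to the integral closure filtration of $J'_m$, together with $H^0(L^{\eF}(A')) = 0$ from Remark \ref{int-cls}.
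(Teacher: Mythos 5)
Your first reduction is a nice observation and is correct: since $\A_n = (\A^n)^*$ for $n \geq m_0$ and $((\A^m)^{*n})^* = (\A^{mn})^*$, the Veronese $\eF^{<m>}$ really is the integral closure filtration of $J_m = (\A^m)^*$ for all $m \geq m_0$. However, from there you take a genuinely different route from the paper, and that route has gaps. The paper instead uses the structural fact that the Rees ring $\tF$ of $\eF$ is a Noetherian graded algebra (being module-finite over $\ta$), so \emph{some} Veronese $\tF^{<m>}$ is standard graded, i.e.\ $\eF^{<m>}$ is $J$-adic with $J = \A_m$; then $J$ is asymptotically normal and the Huckaba--Huneke theorem (for the \emph{adic} filtration) gives $\depth G_{J^n}(A) \geq 2$ for $n \gg 0$, i.e.\ for $n$ in an arithmetic progression, and the new asymptotic-depth-stability Theorem~\ref{asymp-m} upgrades this to all large Veroneses. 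You bypass Huckaba--Huneke entirely and try to reprove its essential content for the integral closure filtration itself, which is where the difficulties appear.

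Two specific gaps in the direct argument. First, the claim that $A'/(y)$ is analytically unramified is not justified, and indeed Ciuperca's Corollary~2 as quoted in~\ref{AtoA'}(d) does \emph{not} assert this; it only asserts that the image of $((J_m')^n)^*$ in $A'/(y)$ equals the integral closure of $(J_m''/(y))^n$ for $n \gg 0$. Without analytic unramifiedness of $A'/(y)$ you cannot invoke Remark~\ref{int-cls} in the quotient to get $H^0_\M(G^*_{J_m''}(A'/(y))) = 0$ (indeed you cannot even guarantee the integral closure filtration of $J_m''$ is Noetherian). This is precisely the obstruction the paper sidesteps by reducing to the adic case before passing to a hyperplane section. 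Second, even granting the $H^0$ vanishing downstairs, the step from ``$G^*_{J_m'}(A')/y^*G^*_{J_m'}(A')$ and $G^*_{J_m''}(A'/(y))$ agree in high degrees'' to ``they have the same depth'' is not rigorous as stated: there is no canonical map between these two graded modules in low degrees, so one cannot write a short exact sequence with a finite-length kernel or cokernel and chase local cohomology. (In the paper the analogous comparison is done at the level of Veronese subfunctors, where the agreement in high degrees translates cleanly into an equality of Veronese submodules, and that is exactly what Theorem~\ref{asymp-m} packages.) Your fallback via the second fundamental exact sequence~\eqref{dagg} also doesn't close the gap on its own: from $H^0(L^{\eF}(A')) = 0$ one gets $\Bcal^{\eF}(y, A') = 0$ and an injection $H^0(L^{\ov{\eF}}(A'/(y))) \hookrightarrow H^1(L^{\eF}(A'))(-1)$, but nothing yet controls $H^1(L^{\eF}(A'))$, which is what you are trying to bound in the first place.
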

\begin{proof}
We note that some Veronese of $\ta$ is standard graded (say $\ta^{<m>} = A[Jt]$. Then $J$ is a asymptotically normal ideal. It follows that $\depth G_{J^n}(A) \geq 2$ for $n \gg 0$.
It follows that  $\depth G_{\eF^{<mn>}}(A) \geq 2$ for all $n \gg 0$. By Theorem \ref{asymp-m} it follows that  $\depth G_{\eF^{<n>}}(A) \geq 2$ for all $n \gg 0$.
\end{proof}

\section{An extension of a result due to Hoa}
In this section we give an extension of a result of Hoa, \cite{Hoa},  which is sufficient for our needs.
We restate it for the convenience of the reader.
\begin{theorem}
\label{Hoa-m} Let $(A,\m)$ be a Noetherian local ring of dimension $d \geq 1$ and let $\A$ be an $\m$-primary ideal. Let $\eF = \{ \A_n \}_{n\geq 0}$ be a multiplicative $\A$-stable filtration.
Suppose $\eF^{<c>}$ is $J = \A_c$-adic and  that reduction number of $J^n$ is $< d$ for all $n \gg 0$. Then the $a$-invariant of $G_\eF(A)$ is negative.
\end{theorem}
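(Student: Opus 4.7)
The plan is to reduce to Hoa's classical theorem applied to the $\m$-primary ideal $J = \A_c$ and then transfer the vanishing of the top local cohomology from $G_J(A)$ to $G_\eF(A)$ via a decomposition into shifted Veronese pieces.

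By Hoa \cite{Hoa}, the hypothesis $\red(J^n) < d$ for $n \gg 0$ gives $a(G_J(A)) < 0$. Since $\eF^{<c>}$ is $J$-adic, $G_{\eF^{<c>}}(A) = G_J(A)$, and the natural graded ring map $G_J(A) \to G_\eF(A)$ sending $G_J(A)_n = J^n/J^{n+1}$ into $G_\eF(A)_{nc} = \A_{nc}/\A_{nc+1}$ makes $G_\eF(A)$ a graded $G_J(A)$-module. It decomposes as $G_\eF(A) = \bigoplus_{i=0}^{c-1} V_i$ where $V_i = \bigoplus_{n \ge 0}\A_{nc+i}/\A_{nc+i+1}$ collects the pieces in $G_\eF(A)$-degree $\equiv i \pmod{c}$, each a graded $G_J(A)$-submodule. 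Local cohomology respects this decomposition, so the desired conclusion reduces to showing $H^d_\M(V_i)_n = 0$ for all $n \ge 0$ (in the $G_J(A)$-grading) and each $i$.

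For each $i$, I consider the $J$-stable filtration $\eG^{[i]} = \{\A_{nc+i}\}_{n \ge 0}$ on the $A$-module $\A_i$ and its associated graded $W_i = G_{\eG^{[i]}}(\A_i) = \bigoplus_n \A_{nc+i}/\A_{(n+1)c+i}$, a graded $G_J(A)$-module of dimension at most $d$. The inclusion $\A_{(n+1)c+i} \subseteq \A_{nc+i+1}$ induces a surjection of graded $G_J(A)$-modules $W_i \twoheadrightarrow V_i$ whose kernel has dimension at most $d$, so the long exact sequence in local cohomology yields $a(V_i) \le a(W_i)$, reducing matters to proving $a(W_i) < 0$. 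Using the $\A$-stability of $\eF$ together with $\A^c \subseteq J$, one obtains $\A_{nc+i} = J^{n-k_0}\A_{k_0 c + i}$ for $n \ge k_0$ and a fixed $k_0$, so the truncation $W_i^{\ge k_0}$ is isomorphic to $G_J(\A_{k_0 c + i})(-k_0)$ as graded $G_J(A)$-modules. A module version of Hoa's theorem applied to $\A_{k_0 c + i}$ --- valid because multiplying the reduction-number identity $KJ^{n(d-1)} = J^{nd}$ by $\A_{k_0 c + i}$ preserves it --- gives $a(G_J(\A_{k_0 c + i})) < 0$, and the standard truncation argument then yields $a(W_i) \le k_0 - 1$.

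The main obstacle will be closing the gap between this bound and the desired strict negativity $a(W_i) < 0$, since $k_0$ may be positive. To address this, I would compute $H^d_\M(G_\eF(A))$ directly via a Cech complex on a minimal reduction $K = (x_1,\ldots,x_d)$ of $J^{n_0}$ satisfying $KJ^{n_0(d-1)} = J^{n_0 d}$, exploiting simultaneously the multiplicativity $\A\A_n \subseteq \A_{n+1}$ and the containment $\A^c \subseteq J$ across all residues modulo $c$, so that the top cohomology vanishes uniformly in non-negative degrees rather than piece by piece.
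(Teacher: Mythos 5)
Your approach is genuinely different from the paper's, and unfortunately it contains a real gap which you yourself identify and do not close. The paper's proof is much shorter and avoids the difficulty entirely. It argues by contradiction: if $a(G_\eF(A)) \geq 0$ then, because multiplication by a degree-one homogeneous parameter is surjective on the top local cohomology module $H^d$ (the Cech complex computes $H^d(M)$ as a quotient of a localization in which that parameter is a unit), nonvanishing of $H^d(G_\eF(A))$ propagates downward in degree and forces $H^d(G_\eF(A))_0 \neq 0$; by exactness of the Veronese, $H^d(G_\eF(A)^{<c>})_0 \neq 0$. But the surjections $\A_{cn}/\A_{c(n+1)} \twoheadrightarrow \A_{cn}/\A_{cn+1}$ assemble into a surjective graded ring map $G_J(A) \twoheadrightarrow G_\eF(A)^{<c>}$ between $d$-dimensional rings, so $H^d(G_J(A)) \twoheadrightarrow H^d(G_\eF(A)^{<c>})$, and $H^d(G_J(A))_0 \neq 0$ contradicts $a(G_J(A)) < 0$ from Hoa. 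In your notation, the paper only needs to control the $i=0$ piece $V_0 = G_\eF(A)^{<c>}$; the downward-propagation fact then handles all degrees $\geq 0$ at once.

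Your plan instead tries to bound $a(V_i)$ for each residue class $i$ separately through the surjections $W_i \twoheadrightarrow V_i$ and the eventual identification $W_i^{\geq k_0} \cong G_J(\A_{k_0c+i})(-k_0)$. Two problems arise. First, you appeal to a ``module version of Hoa's theorem'' applied to $G_J(\A_{k_0c+i})$; the result cited in the paper (\cite{Hoa}) is about $G_I(A)$ itself, and you give no proof that the conclusion $a(G_J(M)) < 0$ follows from $\red(J^n, M) < d$ for a module $M$. Second, and more seriously, even granting that, you only obtain $a(W_i) \leq k_0 - 1$, and $k_0$ is forced to be positive whenever the stability $J\A_m = \A_{m+c}$ starts only from some $m_0 > 0$, which you cannot rule out. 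You acknowledge this gap, but your proposed remedy --- a direct Cech-complex computation on a reduction of $J^{n_0}$ --- is only sketched and not carried out; as written it amounts to saying one would reprove a filtration version of Hoa's theorem from scratch, which is substantially more work than what is actually needed. The missing ingredient in your argument is precisely the downward propagation of nonvanishing of $H^d$ in degree, which collapses the entire problem to the single piece $V_0$ and makes the $k_0$ issue and the $i \geq 1$ pieces irrelevant.
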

\begin{proof}
By our assumption we have that the $a$-invariant of $G_J(A)$ is negative. Suppose if possible
the $a$-invariant of $G_\eF(A)$ is $\geq 0$. Then as local cohomology behaves very well with respect to the Veronese functor we have $H^d(G_\eF(A)^{<c>})_0 \neq 0$.

Notice we have surjective maps
\[
\A_{cn}/\A_{c(n+1)} \rt \A_{cn}/\A_{cn+1} \rt 0 \quad \text{for all}  \ n \geq 1.
\]
So we have a surjective ring homomorphism $G_J(A) \rt G_\eF(A)^{<c>}$. As dimensions of both the rings considered is $d$ we get a surjective map
\[
H^d(G_J(A)) \rt H^d(G_\eF(A)^{<c>}) \rt 0,
\]
which implies that $H^d(G_J(A))_0 \neq 0$, a contradiction. The result follows.
\end{proof}

\section{ Proof of Itoh's conjecture}
In this section we give a proof of Itoh's conjecture.  We restate it for the convenience of the reader.
\begin{theorem}\label{itoh}
Let $(A,\m)$ be a \CM \ analytically unramified local ring of dimension $d \geq 3$ and let $\A$ be an $\m$-primary ideal with $e_3^{\A^*}(A) = 0$. Then  $\GA^*$ is \CM.
\end{theorem}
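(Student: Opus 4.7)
The plan is to follow, at a higher level of generality, the strategy of \cite{Pu-nor} for the normal case, using as new ingredients the general dual filtration machinery of Theorems \ref{dual} and \ref{dual-Ver}, the depth-two bound of Theorem \ref{hh}, the asymptotic constancy of depth from Theorem \ref{asymp}, the Hoa-type negativity of Theorem \ref{Hoa}, and the $L$-construction results of Section \ref{Lprop}.

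First I would perform the standard reductions via \ref{AtoA'}: pass to the completion and then to a rational function field extension, so that $A$ is complete with infinite residue field while all hypotheses of Theorem \ref{itoh} are preserved. By \ref{AtoA'}(d) a generic $y = \sum a_i X_i$ is $\A^*$-superficial, and the image ideal $J$ in $A'/(y)$ satisfies $J_n^* = \overline{((\A^n)')^*}$ for $n \gg 0$; this places the induction on $d$ inside the class of asymptotic integral closure filtrations and motivates working throughout with such an $\eF = \{\A_n\}$. For this $\eF$, Theorem \ref{hh} gives $\depth G_{\eF^{<m>}}(A) \geq 2$ for all $m \gg 0$, and then Theorem \ref{d2-h1L} produces $H^1_\M(L^\eF(A))_n = 0$ for all $n < 0$, while $H^0_\M(L^\eF(A)) = 0$ follows from Remark \ref{int-cls} (adjusted by a finite-length correction in the asymptotic case).

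Next I would apply Theorem \ref{ci-approx} to pick a CI-approximation $(B, \n, \B, \phi)$ of $(A, \m, \A)$ with $\red_\B(B)$ as large as needed. Since $\dim B = d \geq 3$ and $G_\B(B)$ is a graded complete intersection, both $B$ and $G_\B(B)$ are Gorenstein and $\depth B \geq 2$; hence Theorems \ref{dual} and \ref{dual-Ver} apply to $A$ viewed as a finite $B$-module and to $\eF$ viewed as a $\B$-stable filtration. Writing $\omega = \Hom_B(A, B)$ and letting $\eH$ be the dual filtration from Theorem \ref{dual}, we have $\eR(\eH, \omega) \cong \Hs_{\rb}(\eR(\eF, A), \rb)$ together with the Veronese compatibility of Theorem \ref{dual-Ver}. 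Graded local duality over $G_\B(B)$ then identifies (up to shift and Matlis dual) the local cohomology modules $H^i_\M(G_\eF(A))$ with graded pieces built from $\eR(\eH, \omega)$, giving a handle on both the primal and dual Hilbert coefficients.

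Using this duality, and following the template of \cite[\S 8]{Pu-nor}, I would express $e_3^{\eF}(A)$ as an explicit non-negative integer combination of lengths of graded pieces of $H^i_\M(L^\eF(A))$ and $H^i_\M(G_\eF(A))$, all of which are finite by \ref{Artin}. The hypothesis $e_3^{\A^*}(A) = 0$ then forces every such length to vanish. Combined with $H^0_\M(L^\eF(A)) = 0$ and $H^1_\M(L^\eF(A))_{n<0} = 0$ already in hand, and with Theorem \ref{Hoa} applied to a suitable Veronese to force the negativity of the $a$-invariant of $G_\eF(A)$, all intermediate local cohomology modules of $G_\eF(A)$ vanish; so $G_\eF(A)$ is \CM, and hence $\GA^*$ is \CM \ by Theorem \ref{asymp}.

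The main obstacle will be the derivation of the explicit non-negative length formula for $e_3^\eF(A)$ via the dual filtration. In the normal/adic case of \cite{Pu-nor} this relied on a Veronese commutation for the dual filtration that was essentially automatic; in the general case one has only the weaker Theorem \ref{dual-Ver}, so the requisite Hilbert polynomial identities must be reassembled Veronese-by-Veronese, and the asymptotic nature of $\eF$ must be tracked simultaneously through the CI-approximation $\phi$, through superficial reduction \ref{AtoA'}(d), and through the duality. Verifying the reduction-number hypothesis needed to invoke Theorem \ref{Hoa} on an appropriate Veronese of $\eF$ is the final delicate point.
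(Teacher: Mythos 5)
Your high-level inventory of ingredients is essentially correct: completion and generic base change, CI-approximation, dual filtrations via Theorems \ref{dual}--\ref{dual-Ver}, depth $\geq 2$ from Theorem \ref{hh}, constancy of Veronese depth from Theorem \ref{asymp}, and negativity of the $a$-invariant via Theorem \ref{Hoa}. But the mechanism you propose for converting $e_3^{\A^*}(A) = 0$ into vanishing of intermediate local cohomology is not the paper's mechanism, and it contains a genuine gap.

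You assert that $e_3^\eF(A)$ can be written as ``an explicit non-negative integer combination of lengths of graded pieces of $H^i_\M(L^\eF(A))$ and $H^i_\M(G_\eF(A))$,'' so that $e_3 = 0$ forces all those pieces to vanish. No such non-negative formula appears in the paper, and its existence is far from clear: the identities one actually has are alternating Euler-characteristic relations (e.g.\ Lemma \ref{rachel-itoh}(2), $\sum_{i=0}^{2}(-1)^i \ell(H^i(G)) = 0$), which cannot alone force each term to vanish. The hypothesis $e_3^\eF(A) = 0$ enters only indirectly in the paper: a Veronese $\ta^{<l>} = A[Jt]$ is standard graded with $J$ asymptotically normal and $e_3^J(A) = 0$, so the normal-ideal results \cite[7.9, 7.11]{Pu-nor} give that $G_{J^n}(A)$ is \CM\ for $n \gg 0$ and that $a(G_J(A)) < 0$; these are then transferred to $\eF$ through Theorems \ref{asymp}, \ref{Hoa}, \ref{gcm-N} (Lemma \ref{asymp-itoh}).

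From there the paper's argument differs sharply from what you describe. The theorem is first reduced to a vanishing statement in dimension $3$ (Lemma \ref{crucial-itoh}), lifted to $d \geq 4$ by Lemma \ref{crucial-itoh-2}; your proposal has no such dimension reduction. In dimension $3$, negativity of $a(G_\eF(A))$ together with descent modulo a superficial $z$ gives $H^2(L^\eF(A)) = 0$, and the remaining task is to show $H^2(G_\eF(A)) = 0$. That is done by a graded Nakayama argument: one proves $Y := H^2(G)/ztH^2(G) = 0$ by Matlis-dualizing over the CI-approximation $T$, identifying $H^3(G_\eF(A))^\vee$ with $G_\eH(\omega_A)$ via the dual filtration (Theorem \ref{ing}(1)), and showing the Veronese $\R(\ov{\eH}^{<s-1>}, \omega_B)$ is a \CM\ module over $\R_{\ov{\q}^{s-1}}(\ov{T})$ (Theorem \ref{ing}(5)--(6)), whence $H^1(G_{\ov{\eH}}(\omega_B))_n = 0$ for $n < 0$ and then $Y = 0$. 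Your proposal omits this entire closing argument, which is the actual crux, and the length-formula shortcut you rely on would require an independent (and doubtful) proof.
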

The proof is mostly similar to the proof in \cite[section 7]{Pu-nor}. However it is different in quite a few places. So we are forced to give the whole proof.

\s
Recall a multiplicative $\A$-stable filtration $\eF = \{ \A_n \}_{n \geq 0}$ is said to be asymptotically  integral closure filtration with respect to $\A$   if $\A_n = \A_n^*$ for all $n \gg 0$.
The crucial result to prove Itoh's conjecture is the following:
\begin{lemma}\label{crucial-itoh}
Let $(A,\m)$ be an analytically unramified  \CM \    local ring of dimension $3$ and let
 $\A$ be an  $\m$-primary ideal. Let $\eF = \{ \A_n \}_{n \geq 0}$  be an  asymptotic  integral closure filtration \wrt \ $\A$ with $e_3^\eF(A) = 0$.  Set $L^\eF(A) = \bigoplus_{n \geq 0}A/\A_{n+1}$ considered as a  module over the Rees algebra $\ta = A[\A t]$. Let $\M = \m\oplus \ta_+$ be the maximal homogeneous ideal of $\ta$. Then the local cohomology modules $H^i_\M(L^\eF(A))$ vanish for $i = 1, 2$
\end{lemma}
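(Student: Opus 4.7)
\medskip

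The plan is to combine the Huckaba--Huneke-type vanishing for asymptotic integral closure filtrations (Theorem~\ref{hh-m}) with a superficial-element reduction to dimension two, the dual-filtration machinery (Theorems~\ref{dual} and \ref{dual-Ver}), and the hypothesis $e_3^\eF(A) = 0$, following the overall strategy of \cite[\S 7]{Pu-nor} but adapted to the non-normal filtration setting.

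First I would make the standard flat base changes of \ref{AtoA'}: complete $A$ and extend scalars to $A[X_1,\dots,X_n]_{\m A[X_1,\dots,X_n]}$ so that the residue field is infinite. By \ref{AtoA'}(d), a generic $\A$-linear combination of the generators yields an element $y \in \A$ which is superficial with respect to $\eF$, such that the quotient filtration $\bar{\eF}$ on $A/yA$ is itself an asymptotic integral closure filtration with respect to $\A/yA$; all invariants, including $e_3^\eF(A)$ and the depths of $G_{\eF^{<m>}}(A)$, are preserved. Since $\dim A = 3 \geq 2$, Theorem~\ref{hh-m} gives $\depth G_{\eF^{<m>}}(A) \geq 2$ for all $m \gg 0$, and then Theorem~\ref{d2-h1L} yields the first easy piece of the conclusion:
\[
H^1_\M(L^\eF(A))_n = 0 \quad \text{for all } n < 0,
\]
while $H^0_\M(L^\eF(A))$ is of finite length by \ref{zero-lc}.

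Next I would exploit the second fundamental exact sequence (\ref{dagg}) with the superficial element $y$ and its long exact sequence of local cohomology (\ref{longH}), which relates $H^i_\M(L^\eF(A))$ to $H^{i-1}_\M(L^{\bar{\eF}}(A/yA))$ via the shift map. Since $A/yA$ is CM analytically unramified of dimension $2$ and $\bar{\eF}$ is again asymptotic integral closure, Theorem~\ref{hh-m} (in dimension~$2$) forces $G_{\bar{\eF}^{<m>}}(A/yA)$ to be CM for $m \gg 0$, and then Theorem~\ref{gcm-N} shows that $G_{\bar{\eF}}(A/yA)$ is generalized CM, so $H^0_\M(L^{\bar{\eF}}(A/yA))$ and $H^1_\M(L^{\bar{\eF}}(A/yA))$ have finite length. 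Feeding this back into (\ref{longH}) and combining with the previous step and \ref{Artin-vanish} shows that $H^1_\M(L^\eF(A))$ and $H^2_\M(L^\eF(A))$ are themselves of finite length. At this stage the hypothesis $e_3^\eF(A) = 0$ is brought in: via the CI-approximation $B \to A$ of Theorem~\ref{ci-approx} (with $B$ a complete intersection, hence Gorenstein), the dual filtration on $\Hom_B(A,B)$ produced by Theorem~\ref{dual}, and the Veronese compatibility of Theorem~\ref{dual-Ver} after passing to $\eF^{<l>}$ for $l$ large enough that $\eF^{<l>}$ is adic, one recovers an Itoh-type non-negative expression for $e_3^\eF(A)$ as a sum of lengths of graded pieces of $H^1_\M(L^\eF(A))$ and $H^2_\M(L^\eF(A))$. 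Setting this sum equal to zero forces both to vanish.

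The hard part will be the very last step: translating $e_3^\eF(A) = 0$ into the vanishing of the specific length sums just described. In \cite{Pu-nor} the normality of $\A$ meant $\eF$ was the adic filtration of a normal ideal, and Hilbert-series manipulations went through cleanly; here one must absorb the finitely many degrees in which $\eF$ differs from the integral closure filtration, and the dual-filtration machinery (Theorems~\ref{dual} and \ref{dual-Ver}) is precisely the technical device needed to do this, because it lets one compare $\Hs_{\ra}(\eR(\eG,M),\ra)$ with its Veronese and hence reduce to the adic-case length calculation on an $\A^l$-adic filtration via CI-approximation with large reduction number. Once the Itoh-type length formula is in place, the conclusion $H^1_\M(L^\eF(A)) = H^2_\M(L^\eF(A)) = 0$ is immediate.
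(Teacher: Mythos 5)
Your high-level toolkit (superficial element reduction, Theorem~\ref{hh-m}, Theorem~\ref{d2-h1L}, CI-approximation, dual filtrations) is the right one, and the first part of your outline (obtaining $H^1_\M(L^\eF(A))_n = 0$ for $n < 0$ and finite length of $H^0_\M(L^\eF(A))$) matches the paper. However, there are two genuine gaps.

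First, you never invoke the $a$-invariant of $G_\eF(A)$, yet this is exactly where the hypothesis $e_3^\eF(A) = 0$ first enters. The paper (Lemma~\ref{asymp-itoh}) uses $e_3^\eF(A) = 0$ together with the results of \cite{Pu-nor} for adic filtrations and Theorems~\ref{asymp}, \ref{Hoa}, \ref{gcm-N} to conclude that $a(G_\eF(A)) < 0$ and that $G_\eF(A)$ is generalized CM. The negativity of the $a$-invariant is then essential: from the first fundamental exact sequence one obtains $H^2(L)_n \rightarrow H^2(L)_{n-1} \rightarrow H^3(G_\eF(A))_n$, and $a(G) < 0$ gives surjections $H^2(L)_n \rightarrow H^2(L)_{n-1}$ for $n \geq 0$, whence $H^2(L)_n = 0$ for $n \geq -1$. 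This is the crucial leg on which the argument stands, and your proposal omits it entirely. Without it, your claim that the long exact sequence (\ref{longH}) plus finiteness of $H^0(\bar L)$, $H^1(\bar L)$ gives finite length of $H^2(L)$ is wrong: the long exact sequence only gives that $H^2(L)_{n-1} \rightarrow H^2(L)_n$ is eventually an isomorphism for $n \ll 0$, which is compatible with an eventually nonzero (constant) length, so finiteness does not follow.

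Second, your description of the final step — ``one recovers an Itoh-type non-negative expression for $e_3^\eF(A)$ as a sum of lengths of graded pieces of $H^1_\M(L^\eF(A))$ and $H^2_\M(L^\eF(A))$, and setting this equal to zero forces vanishing'' — does not reflect what the dual-filtration machinery actually does, and you do not supply any such formula. In the paper, the point is different and more delicate: after $H^2(L) = 0$ is established (by the $a$-invariant argument plus the superficial-element reduction), it remains to prove $H^2(G_\eF(A)) = 0$ (which then gives $H^1(L) = 0$ by graded Nakayama, since $H^1(L)$ has finite length). This is done by constructing, via Theorems~\ref{dual} and~\ref{dual-Ver} and the CI-approximation with large reduction number, a filtration $\eH$ on the canonical module $\omega_A$ whose associated graded is the Matlis dual of $H^3(G_\eF(A))$; one then passes to the quotient by the superficial element, uses CM-ness of a suitable Veronese of $\R(\ov{\eH}, \omega_B)$, and shows via a Matlis dual of the long exact sequence that $H^2(G)/zt H^2(G) = 0$. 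Your proposal gestures at ``absorbing the finitely many degrees where $\eF$ differs from the integral closure filtration,'' but that is not the role of the dual filtration here; it plays a structural role in producing a canonical-module filtration with controlled depth. As it stands, the hardest and most novel part of the argument is left unaddressed.
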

We will also need to extend Lemma \ref{crucial-itoh} to dimensions $d \geq 4$.
\s \emph{Remark and a Convention:} Note that all the relevant graded modules considered upto Lemma \ref{rachel-itoh}  below are modules over the Rees algebra $\ta = A[\A t]$.  Also all local cohomology will
taken over $\M = \m\oplus \ta_+$  the maximal homogeneous ideal of $\Sc$. Note $G_\A(A)$ is a quotient of $\ta$.
We also note that if $x$ is $A$-superficial \wrt \ $\eF$ then the Rees algebra $\ta^\prime  = A/(x)[\A/(x)t]$ is a quotient of $\ta$. As we are only interested in vanishing of certain local-cohomology modules, by the independence theorem of local cohomology
it does not matter if we take local cohomology of an $\ta^\prime$-module \wrt \ $\M^\prime$ or over $\M$ (and considering the module in question as an $\ta$-module). So throughout we will only write $H^i(-)$ to mean $H^i_\M(-)$.

\begin{lemma}\label{crucial-itoh-2}
Let $(A,\m)$ be an analytically unramified  \CM \    local ring of dimension $d \geq 3$ and let
 $\A$ be an  $\m$-primary ideal. Let $\eF = \{ \A_n \}_{n \geq 0}$  be an  asymptotic  integral closure filtration \wrt \ $\A$ with $e_3^\eF(A) = 0$.  Set $L^\eF(A) = \bigoplus_{n \geq 0}A/\A_{n+1}$ considered as a  module over the Rees algebra $\ta = A[\A t]$. Let $\M = \m\oplus \Sc_+$ be the maximal homogeneous ideal of $\ta$. Then the local cohomology modules $H^i_\M(L^\eF(A))$ vanish for $i = 1, \ldots, d-1$.
\end{lemma}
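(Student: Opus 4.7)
The plan is to induct on $d$ with base case $d=3$ given by Lemma \ref{crucial-itoh}. For the inductive step fix $d \ge 4$ and assume the conclusion in all dimensions between $3$ and $d-1$. First I would replace $A$ by the faithfully flat extension $A' = A[X_1,\ldots,X_s]_S$ of \ref{AtoA'}(c),(d), where $\A = (a_1,\ldots,a_s)$. All hypotheses pass to $A'$: it is \CM \ analytically unramified of dimension $d$ with infinite residue field, the extended filtration $\eF' = \{\A_n A'\}$ is an asymptotic integral closure filtration of $\A A'$, and $e_3^{\eF'}(A') = 0$. Since $H^i_{\M}(-)$ commutes with faithfully flat base change it suffices to establish the vanishing over $A'$. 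The crucial ingredient is Ciuperca's theorem, packaged in \ref{AtoA'}(d), which supplies an $\eF'$-superficial element $y = \sum_{i=1}^{s} a_i X_i$ such that the quotient filtration $\ov{\eF}$ on $B = A'/(y)$ is again an asymptotic integral closure filtration with respect to $J = \A' B$. Since $B$ is \CM \ analytically unramified of dimension $d-1 \ge 3$ and Hilbert coefficients are preserved modulo a superficial element in the \CM \ case, $e_3^{\ov{\eF}}(B) = 0$; the inductive hypothesis then gives $H^i(L^{\ov{\eF}}(B)) = 0$ for $1 \le i \le d-2$.

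To transport the vanishing back to $A'$ I would use the second fundamental exact sequence (\ref{dagg}) with $u = yt$, and extract from the long exact sequence (\ref{longH}) the strand
\[
H^{i-1}(L^{\ov{\eF}}(B)) \longrightarrow V_i(-1) \xrightarrow{\Psi_u} V_i \longrightarrow H^i(L^{\ov{\eF}}(B)),
\]
where $V_i := H^i(L^{\eF'}(A'))$. For $2 \le i \le d-2$ both outer terms vanish by induction, so $\Psi_u$ is an isomorphism; since $V_i$ is $*$-Artinian by \ref{Artin}(a), part (b) of \ref{Artin-vanish} forces $V_i = 0$. For $i = d-1$ only the left outer term vanishes, so $\Psi_u\colon V_{d-1}(-1)\hookrightarrow V_{d-1}$ is injective, and \ref{Artin-vanish}(b) again gives $V_{d-1} = 0$. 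For $i = 1$ only the right outer term vanishes, so $\Psi_u\colon V_1(-1)\twoheadrightarrow V_1$ is surjective; here I would separately invoke \ref{hh-m} and \ref{depth2} to obtain $(V_1)_n = 0$ for all $n < 0$, and then the surjectivity of $\Psi_u$ propagates this vanishing upward degree by degree to give $V_1 = 0$.

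The main obstacle is producing a superficial element whose quotient filtration remains asymptotic integral closure over a \CM \ analytically unramified ring, so that the inductive hypothesis applies verbatim; this is exactly what \ref{AtoA'}(d) delivers. A secondary subtlety is the $i = 1$ case, where mere surjectivity of $\Psi_u$ does not kill $V_1$ abstractly, so one must feed in the negative-degree vanishing from \ref{hh-m} and \ref{depth2}. Apart from these points the argument follows the blueprint of the normal-ideal case \cite[\S7]{Pu-nor}.
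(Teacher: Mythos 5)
Your proposal is correct and follows what is clearly the intended argument: the paper omits the proof, deferring to the normal-ideal case \cite[7.7]{Pu-nor}, and that proof proceeds exactly by your induction on $d$ using a general-extension superficial element from \ref{AtoA'}(d), the long exact sequence (\ref{longH}), and the $*$-Artinian vanishing criterion \ref{Artin-vanish}. You also correctly identified the one place where $*$-Artinianness alone fails (the $i=1$ strand, where $\Psi_u$ is only surjective) and supplied the needed input from \ref{hh-m} and \ref{depth2}, which are precisely the filtration-extensions the paper proves for this purpose.
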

\begin{proof}
This is similar to proof of Theorem \cite[7.7]{Pu-nor} and therefore omitted.
\end{proof}

We now give a proof of Theorem \ref{itoh} assuming Lemma \ref{crucial-itoh}.
\begin{proof}[Proof of Theorem \ref{itoh}]
by Lemma \ref{crucial-itoh-2} we get $H^i(L^\eF(A)) = 0$  for $1 \leq i \leq d -1$. Also as $\eF$ is the integral closure filtration by \ref{int-cls} we get $H^0(L^\A(A)) = 0$. By taking cohomology of the first fundamental sequence \ref{dag} we get that $H^i(G_\A(A)^*) = 0$
for  $0 \leq i \leq d -1$. Thus $G_\A(A)^*$ is \CM.
\end{proof}

\s \label{red-itoh} Thus to prove Itoh's conjecture all we have to do is to prove Lemma \ref{crucial-itoh}. This requires several preparatory results.
\emph{For the rest of this section we will assume $\dim A = 3$ and $\eF = \{ \A_n \}$ is an asymptotic integral closure filtration \wrt \ $\A$  such that $e_3^{\eF}(A) = 0$}. We will also assume that the residue field of $A$ is infinite.
We first show the following.
\begin{lemma}\label{asymp-itoh}
Under the hypotheses as in \ref{red-itoh},  $G_{\eF^{<l>}}(A)$ is \CM \ for $n \gg 0$
and  the $a$-invariant of $G_\eF(A)$ is negative. Furthermore $G_{\eF}(A)$ is generalized \CM.
\end{lemma}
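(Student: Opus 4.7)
The plan is to reduce Lemma \ref{asymp-itoh} to the already-settled normal-ideal case of Itoh's conjecture by passing to an appropriate Veronese of $\eF$. Since $A$ is analytically unramified, Rees's theorem gives that the integral closure of $\ta = A[\A t]$ is a finite $\ta$-module; because $\eF$ is an asymptotic integral closure filtration, $\tF = \bigoplus_n \A_n$ sits between $\ta$ and this integral closure, so $\tF$ is a finitely generated graded $A$-algebra. By a classical fact on finitely generated graded algebras, some Veronese $\tF^{<c>}$ is standard graded in its own grading, i.e.\ $\A_{cn} = \A_c^n$ for every $n \geq 1$. Enlarging $c$ I may further assume $\A_c = (\A^c)^*$, so that $J := \A_c$ is integrally closed and $J^n = \A_{cn} = (\A^{cn})^*$ is integrally closed for every $n$. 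Thus $J$ is a normal $\m$-primary ideal and $\eF^{<c>}$ coincides with the $J$-adic filtration.

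A direct comparison of Hilbert polynomials using $P_{\eF^{<c>}}(X) = P_\eF(cX + c - 1)$ and evaluation at $X = -1$ gives $-e_3^J = P_J(-1) = P_{\eF^{<c>}}(-1) = P_\eF(-1) = -e_3^\eF = 0$. Since $J$ is a normal $\m$-primary ideal in a \CM \ analytically unramified local ring of dimension $3$ with $e_3^J = 0$, the main theorem of \cite{Pu-nor} (Itoh's conjecture in the normal case) applies and yields that $G_J(A) = G_{\eF^{<c>}}(A)$ is \CM. By Theorem \ref{asymp-m}, $\depth G_{\eF^{<l>}}(A)$ is constant for $l \gg 0$; matching this constant with the value $3$ attained at (multiples of) $c$, I conclude that $G_{\eF^{<l>}}(A)$ is \CM \ for every $l \gg 0$, which is the first assertion. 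An appeal to Theorem \ref{gcm-N} then gives that $G_\eF(A)$ is generalized \CM, establishing the third assertion.

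For the $a$-invariant, write the Hilbert series of $G_J(A)$ as $h(z)/(1-z)^3$. Since $G_J(A)$ is \CM \ and the residue field is infinite, a degree-one regular system of parameters exists and modding out shows the $h$-vector is non-negative. Expanding $h(z)/(1-z)^4$ and reading off the polynomial at $-1$ gives $e_3^J = \sum_{j \geq 3} \binom{j}{3} h_j$; combined with $e_3^J = 0$ and $h_j \geq 0$ this forces $h_j = 0$ for $j \geq 3$, hence $\deg h(z) \leq 2$. Consequently the reduction number $r(J) \leq 2 < d$, so $r(J^n) \leq r(J) < d$ for all $n \geq 1$. Theorem \ref{Hoa-m} then yields that the $a$-invariant of $G_\eF(A)$ is negative, which is the second assertion.

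The main obstacle is the first step: producing a single $c$ for which $\eF^{<c>}$ is $J$-adic with $J$ normal. This requires simultaneously exploiting the finite generation of $\tF$ (from Rees's theorem plus the asymptotic integral closure hypothesis) and the eventual identification $\A_c = (\A^c)^*$, and then verifying that standard-gradedness of $\tF^{<c>}$ indeed delivers normality of $J = \A_c$. Once this reduction is in place the remaining steps are essentially mechanical applications of the machinery (\ref{asymp-m}, \ref{gcm-N}, \ref{Hoa-m}) assembled earlier in the paper together with the author's previous resolution of the normal case and an elementary $h$-vector inequality.
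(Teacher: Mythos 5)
Your overall strategy matches the paper's: pass to a Veronese $\eF^{<c>}$ that is $J$-adic, apply known results about $G_J(A)$, then propagate back via Theorems \ref{asymp-m}, \ref{Hoa-m}, and \ref{gcm-N}. You diverge from the paper in two respects: you enlarge $c$ so that $J$ is genuinely normal (not merely asymptotically normal) and then cite the main theorem of \cite{Pu-nor} directly, whereas the paper leaves $J$ asymptotically normal and cites the intermediate results \cite[7.9, 7.11]{Pu-nor}; and you supply an explicit $h$-vector computation showing $r(J) < d$, whereas the paper again defers to \cite{Pu-nor}. Both of these differences are legitimate and arguably cleaner.

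However, there is a gap in your proof of the first assertion. You write that you can match the eventual constant depth from Theorem \ref{asymp-m} ``with the value $3$ attained at (multiples of) $c$,'' but you have only shown $\depth G_{\eF^{<c>}}(A) = \depth G_J(A) = 3$ at $l = c$ itself, and $c$ need not lie in the range where the depth has stabilized. What you actually need is $\depth G_{\eF^{<kc>}}(A) = \depth G_{J^k}(A) = 3$ for some $k \gg 0$, and $G_J(A)$ Cohen--Macaulay does not by itself give $G_{J^k}(A)$ Cohen--Macaulay. This is fixable with tools you already have: observe that each power $J^k$ is again a normal ideal (since $(J^k)^m = J^{km}$ is integrally closed) with $e_3^{J^k} = P_{J^k}(-1)\cdot(-1)^3 = P_J(-1)\cdot(-1)^3 = e_3^J = 0$ by the same evaluation-at-$-1$ argument, so the normal-ideal case of \cite{Pu-nor} gives $G_{J^k}(A)$ Cohen--Macaulay for \emph{every} $k \geq 1$, and then matching with the stable value from Theorem \ref{asymp-m} works. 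Alternatively, combine your reduction-number bound $r(J) < d$ (which you derive later) with $G_J(A)$ Cohen--Macaulay to conclude $\mathcal{R}_J(A)$ is Cohen--Macaulay, hence so are its Veroneses and the associated graded rings $G_{J^n}(A)$; but as written the two halves of your argument are disconnected and the first part should not silently presuppose the powers are handled.
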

\begin{proof}[Proof of Lemma \ref{asymp-itoh}]
Some Veronese of $\ta$ is standard graded. Say $\ta^{<l>} = A[Jt]$. Then $J$ is an asymptotically normal ideal with
 $e_3^J(A) = 0$. Then by  \cite[7.9, 7.11]{Pu-nor},  $G_{J^n}(A)$ is \CM \ for $n \gg 0$ and the $a$-invariant of $G_J(A)$ is negative. The result follows from \ref{asymp-m}, \ref{Hoa-m} and \ref{gcm-N}.
\end{proof}
Next we show
\begin{lemma}\label{rachel-itoh}
Set $G = G_\eF(A)$ and $L = L^\eF(A)$.
Under the hypotheses as in \ref{red-itoh}, we have
\begin{enumerate}[ \rm(1)]
\item
$H^2(L) = 0$.
\item
$\sum_{i = 0}^{2}(-1)^i\ell(H^i(G_\A(A)))  = 0.$
\item
For $i = 0, 1, 2$ we have $H^i(G)_n = 0$ for $n < 0$. Furthermore $H^2(G)_0 = 0$.
\end{enumerate}
\end{lemma}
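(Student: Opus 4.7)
The plan is to prove parts~(1), (2), (3) in order, with part~(1) being the crux; parts~(2) and (3) will follow by standard diagram-chasing on the long exact sequence in local cohomology associated to the first fundamental exact sequence $0 \to G \to L \to L(-1) \to 0$ of~\ref{I-FES}, together with the tools already assembled in this section.

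For part~(1), I combine three inputs. By~\ref{asymp-itoh} the Veronese $G_{\eF^{<l>}}(A)$ is \CM \ for all $l \geq l_0$, so by commutation of local cohomology with the Veronese functor (as used in the proof of~\ref{depth2}), $H^i(L)_{lm - 1} = 0$ for $i = 0, 1, 2$ and every $m \in \Z$. Second, since the $a$-invariant of $G$ is negative (also by~\ref{asymp-itoh}), the segment $H^2(L)_n \to H^2(L)_{n-1} \to H^3(G)_n = 0$ of the long exact sequence gives a surjection $H^2(L)_n \twoheadrightarrow H^2(L)_{n-1}$ for all $n \geq 0$; combined with the $*$-Artinian vanishing $H^2(L)_n = 0$ for $n \gg 0$ (see~\ref{Artin}(a)), iterated surjectivity yields $H^2(L)_n = 0$ for every $n \geq -1$. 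Varying $l$ over $l \geq l_0$ also gives $H^2(L)_n = 0$ for all $n \leq -l_0 - 1$. To close the remaining bounded range $-l_0 \leq n \leq -2$, I use the hypothesis $e_3^\eF(A) = 0$ via the Itoh-type nonnegativity formula (proved for integral closure filtrations in \cite[Theorem 3]{ItN}, and valid for asymptotic integral closure filtrations since the Hilbert coefficients depend only on the tail of the filtration): this formula writes $e_3^\eF(A)$ as a sum of nonnegative integers controlling the lengths of the $H^2(L)_n$ in precisely the gap range, so $e_3^\eF(A) = 0$ forces each of these lengths to vanish, completing the proof that $H^2(L) = 0$.

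Given part~(1), part~(2) is immediate. By~\ref{depth2} $H^1(L)_n = 0$ for $n < 0$, and trivially $H^0(L)_n = 0$ for $n < 0$, so both $H^0(L)$ and $H^1(L)$ have finite total length. The long exact sequence of~\ref{I-FES} then becomes a finite-length complex
\[
0 \to H^0(G) \to H^0(L) \to H^0(L)(-1) \to H^1(G) \to H^1(L) \to H^1(L)(-1) \to H^2(G) \to 0,
\]
and taking alternating sums of lengths --- the $L$-terms cancel in pairs since shifts preserve length --- yields $\sum_{i=0}^{2} (-1)^i \ell(H^i(G)) = 0$. For part~(3), the degree-$n$ piece of the same long exact sequence, combined with $H^2(L) = 0$, $H^0(L)_n = 0$ for $n < 0$, and $H^1(L)_n = 0$ for $n < 0$, immediately yields $H^0(G)_n = 0$ and $H^1(G)_n = 0$ for $n < 0$, and $H^2(G)_n = 0$ for $n \leq 0$ (the last because $H^1(L)_{n-1} = 0$ for $n \leq 0$ precedes $H^2(L)_n = 0$ which succeeds).

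The main obstacle is closing the bounded range of negative degrees in part~(1): neither the Veronese argument nor the surjective propagation from $n \gg 0$ covers these middle degrees, so $e_3^\eF(A) = 0$ must be used in an essential way via the Itoh-type identity to force the remaining $\ell(H^2(L)_n)$ to vanish. Everything else is bookkeeping once that step is in place.
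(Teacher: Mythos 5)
Your treatment of parts (2) and (3) is correct and matches the paper's own argument: once $H^2(L) = 0$ is established, the long exact sequence from the first fundamental sequence, together with the finite-length and nonnegative-degree statements for $H^0(L)$ and $H^1(L)$ (via \ref{zero-lc}, \ref{depth2}, and \ref{hh-m}), gives both the alternating-sum identity and the vanishing of $H^i(G)$ in the stated degrees exactly as you describe.

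The genuine gap is in part (1), in the step closing the bounded range $-l_0 \leq n \leq -2$. You appeal to an ``Itoh-type nonnegativity formula'' that is said to write $e_3^\eF(A)$ as a sum of nonnegative integers equal to the $\ell(H^2(L)_n)$ for $n$ precisely in this gap, citing \cite[Theorem 3]{ItN}. That reference only proves $e_3^{\A^*}(A) \geq 0$; it does not supply a decomposition of $e_3$ as a sum of lengths of graded pieces of $H^2(L)$, and no such identity is established or invoked anywhere in this paper, so this step does not go through as written. The paper closes the gap by a purely cohomological argument that you are missing: after a general flat extension one chooses an $\eF$-superficial element $x$ so that the quotient filtration $\ov{\eF}$ on $B = A/(x)$ (with $\dim B = 2$) is again an asymptotic integral closure filtration (\ref{AtoA'}(d)); then \ref{hh-m} and \ref{depth2} give $H^1(L^{\ov{\eF}}(B))_n = 0$ for $n < 0$, and the segment $H^1(\ov{L})_n \to H^2(L)_{n-1} \to H^2(L)_n$ of the second fundamental exact sequence (\ref{longH}), together with $H^2(L)_{-1} = 0$, propagates the vanishing downward to all $n \leq -2$. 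Note that with this in hand your Veronese step becomes unnecessary, and $e_3^\eF(A) = 0$ enters the proof of (1) only through Lemma \ref{asymp-itoh} (to obtain $a(G) < 0$), not through any additional numerical identity.
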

\begin{proof}
 (1)
 The first fundamental exact sequence $0 \rt G \rt L \rt L(-1) \rt 0$ yields an exact sequence in cohomology
 \[
 H^2(L)_n \rt H^2(L)_{n-1} \rt H^3(G)_n.
 \]
As $a(G) < 0$ we get for $n \geq 0$, surjections $H^2(L)_{n} \rt H^2(L)_{n-1}$. As $H^2(L)_n = 0$ for $n \gg 0$ we get
$H^2(L)_n = 0$ for $n \geq -1$.

After passing through a general extension we may choose $x$, an $A$-superficial element
\wrt  \ $\A$ such that in the ring $B = A/(x)$ the quotient filtration $\eF$ is asymptotic integral closure filtration with respect to $\A/(x)$. Also notice $\dim B = 2$.
Set $\ov{L} = L^{\ov{\eF}}(B)$. By \ref{depth2}  we get that $H^1(\ov{L})_n = 0$ for $n < 0$.
By \ref{longH} we have an exact sequence
\[
 H^1(\ov{L})_n \rt H^2(L)_{n-1} \rt H^2(L)_n
\]
By setting $n = -1$ we get $H^2(L)_{-2} = 0$. Iterating we get $H^2(L)_n = 0$ for all $n \leq -2$.
It follows that $H^2(L) = 0$.

(2) As $G$ is generalized \CM \ we get that $H^i(G)$ has finite length for $i = 0, 1, 2$.
We also have $H^0(L)$
and $H^1(L)$ have finite length (see \ref{zero-lc}, \ref{depth2} and \ref{hh-m}).

The first fundamental exact sequence $0 \rt G \rt L \rt L(-1) \rt 0$ yields an exact sequence in cohomology
 \begin{align*}
  0 &\rt H^0(G) \rt H^0(L) \rt H^0(L)(-1) \\
  &\rt H^1(G) \rt H^1(L) \rt H^1(L)(-1) \\
  &\rt H^2(G) \rt H^2(L) = 0.
 \end{align*}
 Taking lengths, the result follows.

 (3) As $\eF$ is a asymptotic integral closure  filtration \wrt \ $\A$ we have that $H^1(L)_n = 0$ for $n < 0$ (see \ref{depth2} and \ref{hh-m}). Also by \ref{zero-lc} we get $H^0(L)_n = 0$
 for $n < 0$. The result follows from the above exact sequence in cohomology.
 \end{proof}

\begin{remark}\label{apply-gor-approx}
 Till now we have not used our theory of complete intersection approximation. We do it now.
 We first complete $A$. Let $\A^* = (a_1,\ldots, a_n)$ (minimally). If $\mu(\A^*) = 3$ then  $a_1, a_2, a_3$ is an $A$-regular sequence and by a result of Goto, see \cite[1.1]{G}, we get that $(\A^*)^n$ is integrally closed for all $n \geq 1$.
 It follows that $\A_n = (\A^*)^n$ for all $n \geq 1$.
    So $ G_\eF(A)$ is \CM \ and thus we have nothing to prove. Therefore we assume  $\mu(\A^*) \geq 4$. We take a general extension $A' = A[X_1,\ldots, X_n]_{\m A[X_1,\ldots, X_n]}$. Let $y = \sum_{i=1}^{n}a_iX_i$.
 Then the quotient filtration $\ov{\eF}$ is a asymptotic integral closure filtration \wrt \ $\A/(x)$. Set $B = A'/(y)$. By \ref{hh-m} that there exists $c$ such that $G_{\ov{\eF}^{<n>}}(B)$ is \CM \ for $n \geq c$.  We take a CI-approximation $(R,\n,\B,\psi)$ of $A$ \wrt \ $\A^*$ (note \emph{not of} $A'$). By \ref{ci-approx} we may assume $\red(\B, R) \geq c+2$. By construction of CI-approximations, \cite[5.3]{Pu-nor} we may assume  that $\B$ is generated by $b_1,\ldots,b_n$ and $\psi(b_i) = a_i$ for all $i$. Now set $R' = R[X_1,\ldots, X_n]_{\n R[X_1,\ldots, X_n]}$. By \ref{lying-above} we get that $A' = A\otimes_R R'$. We note that $(R',\n',\B',\psi')$ is a CI-approximation of $(A',\m', (\A^*)')$. Set $z = \sum_{i=1}^{n}b_iX_i$. Then note $\psi'(z) = y$.
Also note that $z$ is $R'$-superficial \wrt \ $\B'$, see \cite[2.6]{Ciu} . We now complete $R'$ \wrt \ $\n'$.
 Thus we may assume that our ring $(A,\m, \A^*)$ has a CI-approximation $[T,\tf,\q,\psi]$ such that
 \begin{enumerate}
 \item Both $A$ and $T$ are complete.
   \item reduction number of $\q$ is $\geq c + 2$.
   \item there exists $z \in \q$ which is $\eF$-superficial such that \\ $G_{\ov{\eF}^{<n>}}(A/zA)$ is \CM \ for all $n \geq c$. Furthermore $z^*$ is $G_\q(T)$-regular.
   \item $A$ has a canonical module $\omega_A = \Hom_T(A, T)$.
   \item By \ref{dual} there exists an $\q$-stable filtration $\eG$  on $\omega_A$ \wrt \ $\q$,  such that we have an isomorphism
   \[
   \R(\eG, \omega_A) \cong \Hom_{\R_q(T)}(\R(\eF, A), \R_\q(T)).
   \]
   As $\q \omega_A = \A^* \omega_A$, it follows that $\eG$ is infact an $\A$-stable filtration on $\omega_A$.
   \end{enumerate}

   Let $\R_\q(T)$ be the extended Rees-algebra of $T$ \wrt \ $\q$.  Let $\N$ be the $*$-maximal ideal of $\R_\q(T)$.
\end{remark}
We first show the following.
\begin{lemma}\label{gcm}
 Under the hypotheses as in \ref{apply-gor-approx}, we have:
 \begin{enumerate}[\rm (1)]
  \item For any prime $P$ in $\R_\q(T)$ with $\htt P \leq 3$ we have $\R(\eF, A)_P$ is \CM.
  \item
  $H^i_\N(\R(\eF, A))$ has finite length for $0 \leq i \leq 3$.
 \end{enumerate}
\end{lemma}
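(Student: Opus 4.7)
The plan is to split into the two claimed parts, combining the generalized Cohen--Macaulay information on $G_\eF(A)$ from Lemma~\ref{asymp-itoh}, the cohomology computations of Lemma~\ref{rachel-itoh}, and the regular element $t^{-1} \in \R_\q(T)$.

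\emph{Part (1)} will proceed by a case analysis on whether $t^{-1} \in P$. If $t^{-1} \notin P$, then localization at $P$ inverts $t^{-1}$, and since $\A_n = A$ for $n \leq 0$ we have the identification $\R(\eF, A)[t^{-1}] = A[t, t^{-1}]$; because $A$ is Cohen--Macaulay, so is $A[t,t^{-1}]$, and hence so is its localization $\R(\eF, A)_P$. If $t^{-1} \in P$, set $\bar P = P/(t^{-1})\R_\q(T)$, a prime of $G_\q(T) = \R_\q(T)/(t^{-1})$ of height at most $2$. Since $\dim G_\q(T) = 3$, the prime $\bar P$ is not the $*$-maximal ideal, and by Lemma~\ref{asymp-itoh} the generalized Cohen--Macaulay module $G_\eF(A)$ is Cohen--Macaulay on its punctured spectrum; in particular $G_\eF(A)_{\bar P}$ is Cohen--Macaulay. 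Because $t^{-1}$ is $\R(\eF, A)$-regular and $\R(\eF, A)/(t^{-1}) = G_\eF(A)$, Cohen--Macaulayness lifts through the regular element, giving that $\R(\eF, A)_P$ is Cohen--Macaulay.

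\emph{Part (2)} will use the short exact sequence
\[
0 \longrightarrow \R(\eF, A) \longrightarrow A[t, t^{-1}] \longrightarrow L^\eF(A)(-1) \longrightarrow 0.
\]
Since $t^{-1} \in \N$ is a unit in $A[t, t^{-1}]$, every $H^i_\N(A[t, t^{-1}])$ vanishes, and the long exact sequence collapses to isomorphisms $H^i_\N(\R(\eF, A)) \cong H^{i-1}_\N(L^\eF(A))(-1)$ for every $i$. Because each graded piece of $L^\eF(A)$ is $\m$-torsion, the $\N$-cohomology of $L^\eF(A)$ agrees with the $\M$-cohomology used throughout section~\ref{Lprop}. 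Substituting the known facts: for $i = 0$ the right-hand side is $0$; for $i = 1$ it equals $H^0(L^\eF(A))$, of finite length by~\ref{zero-lc}; for $i = 2$ it equals $H^1(L^\eF(A))$, which vanishes in negative degrees by Theorem~\ref{depth2} (applicable since Theorem~\ref{hh-m} gives $\depth G_{\eF^{<m>}}(A) \geq 2$ for $m \gg 0$) and in sufficiently positive degrees by~\ref{Artin}(b), hence has finite length; for $i = 3$ it equals $H^2(L^\eF(A))$, which vanishes by Lemma~\ref{rachel-itoh}(1). Thus all four local cohomologies have finite length.

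\emph{Main obstacle.} The principal technicality is regarding $L^\eF(A)$ as an $\R_\q(T)$-module so that the identification of $H^\bullet_\N$ with the $H^\bullet_\M$ of section~\ref{Lprop} is valid. The $\q t$-action on $L^\eF(A)$ requires $\q\A_n \subseteq \A_{n+1}$, which holds for $n \gg 0$ since $\eF$ is asymptotically the integral-closure filtration and $\A^* \cdot (\A^n)^* \subseteq (\A^{n+1})^*$; finitely many low-degree discrepancies alter the computation only by finite-length corrections, which do not affect the conclusion. The crucial substantive input making the $i = 3$ case work is the vanishing $H^2(L^\eF(A)) = 0$ of Lemma~\ref{rachel-itoh}(1); without it the long exact sequence approach via $G_\eF(A)$ and the regular element $t^{-1}$ would stall, because $H^3_\N(G_\eF(A))$ need not have finite length.
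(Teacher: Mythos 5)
Your argument is correct and, as far as one can tell (the paper only says the proof is ``similar to Lemma 7.13 of \cite{Pu-nor}'' and omits it), it follows the natural route that the omitted proof almost certainly takes: split Part~(1) according to whether $t^{-1}\in P$ and use the generalized Cohen--Macaulayness of $G_\eF(A)$ from Lemma~\ref{asymp-itoh} when $t^{-1}\in P$; for Part~(2) use the exact sequence $0\to\R(\eF,A)\to A[t,t^{-1}]\to L^\eF(A)(-1)\to 0$ together with the vanishing $H^i_\N(A[t,t^{-1}])=0$ and the known information about $H^i(L^\eF(A))$ from Section~\ref{Lprop} and Lemma~\ref{rachel-itoh}.

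Two remarks on details you wave through. First, in Part~(1) you invoke ``generalized CM $\Rightarrow$ CM on the punctured spectrum'' and apply it to $G_\eF(A)_{\bar P}$ where $\bar P$ need not be a homogeneous prime; one should first reduce to the largest homogeneous prime $\bar P^*\subseteq\bar P$ (which has height $<\htt\bar P\leq 2$, so is not the $*$-maximal ideal) using the standard fact that for graded modules $M_{\bar P}$ is CM iff $M_{\bar P^*}$ is CM, and one should note the dimension equality $\dim\R(\eF,A)_P=1+\dim G_\eF(A)_{\bar P}$ (which holds because $t^{-1}$ is a nonzerodivisor in the maximal ideal of $\R_\q(T)_P$). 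Second, your ``main obstacle'' paragraph slightly misstates the issue: what the statement of the lemma already presupposes is that $\R(\eF,A)$ is an $\R_\q(T)$-module, i.e.\ that $\q\A_n\subseteq\A_{n+1}$ for all $n$, and once this is granted the quotient $L^\eF(A)(-1)=A[t,t^{-1}]/\R(\eF,A)$ is automatically an $\R_\q(T)$-module, so no ``finite-length correction'' is needed; the identification $H^\bullet_\N(L^\eF(A))=H^\bullet_\M(L^\eF(A))$ then follows from independence of the base ring together with the observation that $L^\eF(A)$ is $t^{-1}$-torsion (so the generator $t^{-1}$ of $\N$ contributes nothing to the \v Cech complex). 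Finally, note that for Part~(2) at $i=3$ you cite the full vanishing $H^2(L^\eF(A))=0$ from Lemma~\ref{rachel-itoh}(1), though finite length would already follow from Lemma~\ref{asymp-itoh} and \ref{Artin}(d); this is harmless but worth noting since the stronger vanishing uses $e_3^\eF=0$ via the superficial-element argument.
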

\begin{proof}
This is similar to proof of Lemma 7.13 in \cite{Pu-nor} and so is omitted.
\end{proof}
As a consequence we get
\begin{corollary}\label{exact-loc}
Under the hypotheses as in \ref{apply-gor-approx}, we have
\begin{enumerate}[\rm (1)]
  \item $H^3_\N(\R(\eF, A)) = 0$
  \item an exact sequence
\[
 0 \rt H^3_\N(G_\eF(A)) \rt H^4_\N(\R(\eF,A))(+1) \xrightarrow{t^{-1}} H^4_\N(\R(\eF, A)) \rt 0.
\]
  \item $\Ext^1_{\R_\q(T)}(\R(\eF, A),\R_\q(T)) = 0$.
\end{enumerate}
\end{corollary}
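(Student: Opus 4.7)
The plan is to extract all three parts from the long exact sequence of local cohomology $H^\bullet_\N(-)$ associated to
\[
0 \to \R(\eF,A)(+1) \xrightarrow{t^{-1}} \R(\eF,A) \to G_\eF(A) \to 0,
\]
together with graded local duality over $\R_\q(T)$. I use the independence theorem to identify $H^i_\N(G_\eF(A))$ with the usual local cohomology of $G_\eF(A)$, so the vanishing facts from Lemmas \ref{asymp-itoh} and \ref{rachel-itoh} apply directly.

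For (1), focus on the four-term segment
\[
H^2_\N(G_\eF(A))_n \to H^3_\N(\R(\eF,A))_{n+1} \xrightarrow{t^{-1}} H^3_\N(\R(\eF,A))_n \to H^3_\N(G_\eF(A))_n
\]
of the LES. By Lemma \ref{asymp-itoh}, $a(G_\eF(A)) < 0$, so the right term vanishes for $n \ge 0$, making $t^{-1}$ surjective there; combined with the finite length of $H^3_\N(\R(\eF,A))$ (Lemma \ref{gcm}(2)), iterating the surjection downward from very high degree forces $H^3_\N(\R(\eF,A))_n = 0$ for $n \ge 0$. Symmetrically, $H^2_\N(G_\eF(A))_n = 0$ for $n < 0$ by Lemma \ref{rachel-itoh}(3), so $t^{-1}$ is injective for such $n$; iterating the injection upward from very low degree gives $H^3_\N(\R(\eF,A))_n = 0$ for $n \le 0$. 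The two ranges together cover every degree.

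Part (2) is then immediate from the next piece of the same LES,
\[
H^3_\N(\R) \to H^3_\N(G) \to H^4_\N(\R)(+1) \xrightarrow{t^{-1}} H^4_\N(\R) \to H^4_\N(G),
\]
since the leftmost term is zero by (1) and the rightmost is zero because $\dim G_\eF(A) = 3$; the middle four terms are exactly the advertised short exact sequence.

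For (3), I invoke graded local duality. The CI-approximation makes both $T$ and $G_\q(T)$ complete intersections and hence Gorenstein; since $t^{-1}$ is a nonzerodivisor on $\R_\q(T)$ with quotient $G_\q(T)$, the standard Gorenstein lifting argument shows that $\R_\q(T)$ is $*$-Gorenstein of dimension $4$. Graded local duality then identifies $\Ext^1_{\R_\q(T)}(\R(\eF,A),\R_\q(T))$ (up to a degree shift) with the graded Matlis dual of $H^3_\N(\R(\eF,A))$, which vanishes by (1). The main subtlety is the degree bookkeeping in (1): the surjectivity-driven vanishing above $n = 0$ and the injectivity-driven vanishing below $n = 0$ must meet precisely at the origin, and this is exactly where the vanishing in Lemma \ref{asymp-itoh} and in Lemma \ref{rachel-itoh}(3) are calibrated.
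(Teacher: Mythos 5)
Your argument is correct and follows the same strategy the paper references (namely the proof of \cite[7.14]{Pu-nor}): apply local cohomology $H^\bullet_\N(-)$ to the short exact sequence $0 \rt \R(\eF,A)(+1) \xrightarrow{t^{-1}} \R(\eF,A) \rt G_\eF(A) \rt 0$, sandwich $H^3_\N(\R(\eF,A))$ between $H^2_\N(G_\eF(A))$ and $H^3_\N(G_\eF(A))$, and use the finite-length input of Lemma \ref{gcm}(2) together with $a(G_\eF(A))<0$ (Lemma \ref{asymp-itoh}) and $H^2(G_\eF(A))_n=0$ for $n<0$ (Lemma \ref{rachel-itoh}(3)) to kill $H^3_\N(\R(\eF,A))$ degree by degree; (2) then falls out of the next segment of the long exact sequence using $H^4_\N(G_\eF(A))=0$ since $\dim G_\eF(A)=3$, and (3) follows from graded local duality over the Gorenstein extended Rees ring $\R_\q(T)$ of dimension $4$.
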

\begin{proof}
This is similar to proof of Corollary 7.14 in \cite{Pu-nor} and so is omitted.
\end{proof}
The following result is a crucial ingredient in proving Lemma \ref{crucial-itoh}.
\begin{theorem}\label{ing}Under the hypotheses as in \ref{apply-gor-approx},
 let $E_G$ be the injective hull of $k$ considered as a
 $G_\q(T)$-module. Set $W = \Hom_{G_\q(T)}(H^3(G_\eF(A)), E_G)$. Let $s = \red(\q, T)$. Let $\eG$ be the dual filtration on $\omega_A$ with respect to $\eF$. Set $\eH = \eG(s-2)$. Then
 \begin{enumerate}[\rm (1)]
  \item
  $G_\eH(\omega_A) \cong W[1]$
 \item
 $ \depth G_\eH(\omega_A) \geq 2$. Furthermore $z^*$ is $G_\eH(\omega_A)$-regular.
 \item
 $\eH_n = \omega_A$ for $n \leq 0$.
 \item
 Set $\ov{T} = T/zT$, $\ov{\q} = \q/(z)$ and $B = A/zA$. Let $\ov{\eF}$ be the quotient filtration of $\eF$ and let $\ov{\eG}$ be quotient filtration of $\eG$.
 Let $\eG^\prime$ be the dual filtration on $\omega_B$ \wrt \ $\ov{\eF}$. Then $\ov{\eG} = \eG^\prime$.
 \item
 $(\R(\ov{\eH}(1),\omega_B))^{<s-1>}$ is a \CM \ $\R_{\q^{s-1}}(\ov{T})$-module.
 \item
 $\R(\ov{\eH}^{<s-1>}, \omega_B)$ is a \CM \ $\R_{\q^{s-1}}(\ov{T})$-module.
\item
 $H^1(G_{\ov{\eH}}(\omega_B))_n = 0$ for $n < 0$.
 \end{enumerate}
\end{theorem}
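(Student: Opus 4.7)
The seven items are organized so that (1) is the core duality computation; (2)--(3) extract properties of $\eH$, (4) is a compatibility with reduction modulo $z$, and (5)--(7) apply (1)--(4) one dimension down. I will handle them in the order (1), (2)--(3), (4), (5)--(7). The main obstacle is the shift bookkeeping in (1), where three graded shifts (from the $t^{-1}$-resolution, from change of rings, and from the $a$-invariant of $G_\q(T)$) must combine with the cosmetic shift $s-2$ in the definition of $\eH$ to produce exactly $W[1]$.

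For (1): I begin from the identification $\R(\eG, \omega_A) \cong \Hs_{\R}(\R(\eF, A), \R)$ given in \ref{apply-gor-approx}(5), where $\R = \R_\q(T)$, and apply $\Hs_\R(-, \R)$ to the short exact sequence $0 \to \R(\eF, A)(+1) \xrightarrow{t^{-1}} \R(\eF, A) \to G_\eF(A) \to 0$. Using Corollary \ref{exact-loc}(3) ($\Es^1_\R(\R(\eF, A), \R) = 0$), I get $G_\eG(\omega_A) = \R(\eG, \omega_A)/t^{-1}\R(\eG, \omega_A) \cong \Es^1_\R(G_\eF(A), \R)(+1)$. Since $t^{-1}$ is $\R$-regular and annihilates $G_\eF(A)$, change of rings identifies $\Es^1_\R(G_\eF(A), \R)$ with $\Hs_G(G_\eF(A), G)(-1)$, where $G = G_\q(T) = \R/t^{-1}\R$. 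Hence $G_\eG(\omega_A) \cong \Hs_G(G_\eF(A), G)$. Since $G$ is Gorenstein (in fact CI) of dimension $3$ with $a$-invariant $a(G) = s - 3$, its graded canonical module is $\omega_G = G(s-3)$; graded local duality then yields $W \cong \Hs_G(G_\eF(A), \omega_G) = \Hs_G(G_\eF(A), G)(s-3)$. Combining, $G_\eH(\omega_A) = G_\eG(\omega_A)(s-2) \cong W(1) = W[1]$.

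For (2)--(4): Item (2) follows because $W \cong \Hs_G(G_\eF(A), \omega_G)$ is a canonical module of $G_\eF(A)$ and such modules are always $S_2$; in dimension $3$ this gives $\depth \geq 2$. For $z^*$-regularity, $z^*$ is $G$-regular by hypothesis, hence regular on $\omega_G = G(s-3)$, so for any nonzero $\phi \in \Hs_G(G_\eF(A), \omega_G)$ we have $z^*\phi \neq 0$ (else $\image \phi \subseteq (0 :_{\omega_G} z^*) = 0$). Item (3) uses the explicit description $\eG_m = \{\xi \in \omega_A \mid \xi(\eF_i) \subseteq \q^{m+i} \text{ for all } i\}$ from Theorem \ref{pre-ver}; the shift $s-2$ is chosen so that $\eH_n = \eG_{n+s-2} = \omega_A$ for $n \leq 0$, via the reduction number bound on $\q$. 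For (4), I apply $\Hs_\R(-, \R)$ to $0 \to \R(\eF, A)(+1) \xrightarrow{z} \R(\eF, A) \to \R(\ov{\eF}, B) \to 0$ (exact in sufficiently high degrees by superficiality of $z$), and use $\Es^1_\R(\R(\eF, A), \R) = 0$ together with change of rings for the regular element $z$ to identify $\R(\ov{\eG}, \omega_B) \cong \R(\eG', \omega_B)$.

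For (5)--(7): Repeating the duality computation of (1) one dimension down, via (4) and the fact that $z^*$ is regular on $G$, I obtain $G_{\eG'}(\omega_B) \cong \Hs_{G/z^*G}(G_{\ov{\eF}}(B), \omega_{G/z^*G})$---again a canonical module. Since $\dim B = 2$ and a $2$-dimensional $S_2$ module is CM, $G_{\ov{\eH}}(\omega_B)$ (which differs from $G_{\eG'}(\omega_B)$ only by a shift) is CM of dimension $2$. This yields (7) at once: $H^1(G_{\ov{\eH}}(\omega_B)) = 0$. Combined with the $z^*$-regularity on $G_\eH(\omega_A)$ from (2), CM-ness of the reduction lifts to CM-ness of $\R(\ov{\eH}, \omega_B)$ over $\R_{\ov{\q}}(\ov{T})$; the Veronese functor preserves CM-ness (local cohomology commutes with the Veronese), delivering (5) and (6).
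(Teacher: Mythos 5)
Your treatment of items (1), (2) and (4) is correct and, modulo presentation, is the same argument as the paper's: item (1) is the paper's local-duality computation read off one step earlier (at the level of $\Es^1_\R(G_\eF(A),\R)$ rather than dualizing $H^4_\N$), and the shift bookkeeping lands in the right place; (2) and (4) agree with the paper up to the finite-length correction term $E \subseteq \R(\eF,A)/zt\,\R(\eF,A)$, which you gloss over but the paper handles by observing $\Hs_{\R_\q(T)}(E,\R_\q(\ov T))=0$. However there are two genuine gaps.

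\textbf{Gap in (3).} You claim that $\eH_n=\eG_{n+s-2}=\omega_A$ for $n\le 0$ follows from the explicit description $\eG_m=\{\xi\mid \xi(\A_i)\subseteq \q^{\,m+i}\ \forall i\}$ together with the reduction-number bound $\red(\q,T)\ge c+2$. This cannot work: the assertion $\eG_{s-2}=\omega_A$ encodes the negativity of the $a$-invariant of $G_\eF(A)$, which is where the hypothesis $e_3^{\eF}(A)=0$ actually enters (via Lemmas \ref{asymp-itoh}/\ref{rachel-itoh}), and no condition on $\red(\q,T)$ alone can produce it. Concretely, for $m=s-2>0$ one would need every $\psi\in\Hom_T(A,T)$ to satisfy $\psi(A)\subseteq \q^{\,s-2}$ at $i=0$, which is false; the paper instead deduces $\eH_n=\omega_A$ for $n\le 0$ from $G_\eH(\omega_A)\cong W[1]$ of item (1) combined with $W_m=0$ for $m\le 0$, the latter being exactly $a(G_\eF(A))<0$.

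\textbf{Gap in (5)--(7).} Your route is to push the duality of (1) one dimension down and conclude that $G_{\ov\eH}(\omega_B)$ is itself a canonical module of $G_{\ov{\eF}}(B)$, hence $S_2$, hence \CM\ in dimension two, from which (5)--(7) follow at once. But the change-of-rings step you invoke requires $\Es^1_{\R_{\ov\q}(\ov T)}\bigl(\R(\ov{\eF},B),\R_{\ov\q}(\ov T)\bigr)=0$, which by local duality over the $3$-dimensional Gorenstein ring $\R_{\ov\q}(\ov T)$ is equivalent to $H^2_{\ov\N}(\R(\ov{\eF},B))=0$, i.e.\ to $\R(\ov{\eF},B)$ (equivalently $G_{\ov{\eF}}(B)$) being \CM. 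This is precisely what is \emph{not} known; the construction in \ref{apply-gor-approx} only gives \CM-ness of $G_{\ov{\eF}^{<n>}}(B)$ for $n\ge c$, and this does not pass to the full filtration. In fact your argument, if correct, would prove the stronger claim that $G_{\ov\eH}(\omega_B)$ is \CM, which the paper does not assert. The paper's actual route to (5)--(7) is quite different: it works entirely at the level of a high Veronese, using Ooishi's theorem to get $G_{\ov\q^{\,s-1}}(\ov T)$ Gorenstein, and—crucially—Theorem \ref{dual-Ver} to identify the $(s-1)$-Veronese of $\R(\ov{\eG},\omega_B)$ with the dual of $\R(\ov{\eF}^{<s-1>},B)$ over $\R_{\ov\q^{\,s-1}}(\ov T)$; CM-ness of $\R(\ov{\eF}^{<s-1>},B)$ then forces CM-ness of its dual, and (7) is extracted by Theorem \ref{depth2}. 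The Veronese compatibility of dual filtrations is not a convenience here but the essential device that circumvents exactly the $\Es^1$-vanishing your argument needs.
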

\begin{proof}
(1) We note that the $a$-invariant of $G_\q(T)$ is $s-3$. So the $a$-invariant of $\R_\q(T)$ is $s-2$.
Let $E_\R$ be the injective hull of $k$ considered as a $\R_\q(T)$-module.  Set $(-)^\vee = \Hom_{\R_q(T)}(-, E_\R)$.
Dualizing the exact sequence in Lemma \ref{exact-loc} we get a sequence of $\R_\q(T)$-modules
\[
 0 \rt H^4_\N(\R(\eF, A))^\vee \xrightarrow{t^{-1}}  H^4_\N(\R(\eF, A))^\vee(-1) \rt W \rt 0.
\]
As $R_\q(T)$ is Gorenstein then by local duality we have an exact sequence
$$ 0 \rt V \xrightarrow{t^{-1}} V(-1) \rt W \rt 0, $$
where
$$ V = \Hom_{\R_q(T)}(\R(\eF, A), \R_\q(T)(s-2)). $$
It follows that $G_\eH(\omega) = W[1]$

(2)
We note that  $W \cong \Hom_{G_\q(T)}(G_\eF(A), G_\q(T) (s-3))$. We have that $G_\q(T)$ is \CM \ of dimension three and $z^*$ is $G_\q(T)$-regular.  The result follows.

(3)
 Set $G = G_\eF(A)$.
Let $E = H^3(G)$. By \ref{rachel-itoh}(1) $a(G) < 0$. So  $E_n = 0$ for $n\geq 0$. Thus $W_m = 0$
for $m \leq 0$. Therefore $G_\eH(\omega_A)_n = 0$ for $n \leq -1$. The result follows.

(4) We have an exact sequence
\[
0 \rt \R_\q(T)(-1) \xrightarrow{zt} \R_\q(T) \rt   \R_\q(\ov{T}) \rt 0.
\]
This yields an exact sequence
\begin{align*}
  0 \rt \Hom_{\R_\q(T)}(\R(\eF, A), \R_\q(T))(-1) &\xrightarrow{zt} \Hom_{\R_\q(T)}(\R(\eF, A), \R_\q(T)) \rt  \\
  \Hom_{\R_\q(T)}(\R(\eF, A), \R_\q(\ov{T})) &\rt \Ext^1_{\R_\q(T)}(\R(\eF, A), \R_\q(T))(-1) \rt.
\end{align*}
By \ref{exact-loc} we get that $\Ext^1_{\R_\q(T)}(\R(\eF, A), \R_\q(T)) = 0$.
We note that
\[
\Hom_{\R_\q(T)}(\R(\eF, A), \R_\q(\ov{T})) = \Hom_{\R_\q(T)}(\R(\eF, A)/zt \R(\eF, A), \R_\q(\ov{T})).
\]
 We have an exact sequence
\[
0 \rt E = \bigoplus_{n \geq 1}\frac{\A_n \cap (z) }{ z\A_{n-1}} \rt \R(\eF, A)/zt \R(\eF, A) \rt \R(\ov{\eF}, B) \rt 0.
\]
As $z$ is $A$-superficial \wrt \ $\eF$  we get that $E$ has finite length. As $\R_\q(\ov{T})$ is \CM \  and has dimension $3$ we get $\Hom_{\R_\q(T)}(E, \R_\q(\ov{T})) = 0$. So
\[
\Hom_{\R_\q(T)}(\R(\eF, A)/zt \R(\eF, A) , \R_\q(\ov{T})) \cong \Hom_{\R_\q(T)}(\R(\ov{\eF}, B), \R_\q(\ov{T})).
\]
Thus we have an exact sequence
\[
0 \rt \R(\eG, \omega_A)(-1)\xrightarrow{zt} \R(\eG,\omega_A) \rt \R(\eG^\prime,\omega_B) \rt 0;
\]
where $\eG^\prime$ is the dual filtration on $\omega_B$ with respect to $\ov{\eG}$.
As $z^*$ is $G_\eG(\omega_A)$-regular it follows that $\ov{\eG} = \eG^\prime$.

(5) By our construction $\R(\ov{\eF}^{<n>}, B)$ is \CM \ for $n \geq c$. Also $s = \red(\q, T) = \red(\ov{\q}, \ov{T}) \geq c + 2$.
As $G_{\ov{\q}}(\ov{T})$ is Gorenstein we get by \cite[Theorem 1]{Ooishi-gor-powers}  that $G_{\ov{\q}^{s-1}}(\ov{T})$ is Gorenstein. It follows that $Q = \R_{\ov{\q}^{s-1}}(\ov{T})$ is Gorenstein. Therefore $\Hom_{Q}(\R(\ov{\eF}^{<s-1>}, B), Q))$ is a \CM \ $Q$-module.
By \ref{dual-Ver} we have an isomorphism of $Q$-modules
\[
\Hom_{\R(\ov{\q}, T)}(\R(\ov{\eF}, B),   \R_{\ov{\q}}( \ov{T} ))^{<s-1>}  \cong \Hom_{Q}(\R(\ov{\eF}^{<s-1>}, B),Q))
\]
Thus $\R(\ov{\eG}, \omega_B)^{<s-1>}$ is a \CM \ $Q$-module.
Set
\[
U = \R(\ov{\eG}(s-1), \omega_B)^{<s-1>} \quad  \text{and} \quad V =  \left(\R(\ov{\eG}, \omega_B)^{<s-1>}\right)(s-1).
\]
We claim $U = V$.\\
Proof:
\[
U = (\bigoplus_{n \in \Z}\ov{\eG}_{n + s-1})^{<s-1>} = \bigoplus_{n\in \Z}\ov{\eG}_{(n+ s-1)(s-1)},
\]
while
\[
V_n = (\R(\ov{\eG}, \omega_B))^{<s-1>}_{n+s-1} = \ov{\eG}_{(n+s-1)(s-1)}.
\]
Thus the assertion made in the Claim holds.

It follows that $ \R(\ov{\eG}(s-1), \omega_B)^{<s-1>}$ is a \CM \ $Q$-module.
So \\ $\R(\ov{\eH}(1), \omega_B)^{<s-1>}$ is a \CM \ $Q$-module.

(6) We have $\R(\ov{\eH}, \omega_B) = \bigoplus_{n \in \Z} \ov{\eH}_n$. So we get
\[
\R(\ov{\eH}(1), \omega_B)^{<s-1>} = \bigoplus_{n \in \Z} \ov{\eH}_{(n+1)(s-1)}.
\]
We also have $\R(\ov{\eH}, \omega_B)^{<s-1>} = \bigoplus_{n \in \Z} \ov{\eH}_{n(s-1)}$. Thus we have
\[
\left(\R(\ov{\eH}, \omega_B)^{<s-1>}\right)_n = \ov{\eH}_{n(s-1)} = \left(\R(\ov{\eH}(1), \omega_B)^{<s-1>}\right)_{n-1}.
\]
So we have an equality of $Q = \R_{\ov{\q^{s-1}}}(\ov{T})$-modules
\[
\left(\R(\ov{\eH}, \omega_B)^{<s-1>}\right) \cong \left(\R(\ov{\eH}(1), \omega_B)^{<s-1>}\right)(-1).
\]
Thus $\R(\ov{\eH}, \omega_B)^{<s-1>}$ is a \CM \ $Q$-module.  We also note  that \\ $\R(\ov{\eH}, \omega_B)^{<s-1>} = \R(\ov{\eH}^{<s-1>}, \omega_B)$.

(7) By (6) we get that $G_{\ov{\eH}^{<s-1>}}(\omega_B)$ is \CM. The result now follows from \ref{depth2}.
\end{proof}

We now give Proof of Lemma \ref{crucial-itoh}.
\begin{proof}
By  Lemma \ref{rachel-itoh}(2) we get $H^2(L^\eF(A)) = 0$. By \ref{dag} we have an exact sequence
\[
H^1(L^\eF(A)) \rt H^1(L^\eF(A))(-1) \rt H^2(G_\eF(A)) \rt H^2(L^\eF(A)) = 0
\]
As $H^1(L^\eF(A))_n = 0$ for $n < 0$ it follows that $H^2(G_\eF(A))_n = 0$ for $n \leq 0$. Note that to prove $ H^1(L^\eF(A)) = 0$ it suffices to show $H^2(G_\eF(A)) = 0$ (because \\
$H^1(L^\eF(A))$ has finite length).
Set $G = G_\eF(A)$ and $\ov{G} = G/ztG$. We have a exact sequence
\[
0 \rt K \rt G(-1) \xrightarrow{zt} G \rt \ov{G} \rt 0;
\]
where $K$ has finite length. Taking local cohomology we get an exact sequence
\begin{align*}
  H^2(G)(-1)&\xrightarrow{zt} H^2(G) \rt H^2(\ov{G}) \rt \\
  H^3(G)(-1)&\xrightarrow{zt} H^3(G) \rt 0.
\end{align*}
Set $Y = H^2(G)/ztH^2(G)$. Note $Y_n = 0$ for $n \leq 0$. Taking Matlis dual's  we get an exact sequence
\[
0 \rt H^3(G)^\vee \xrightarrow{zt} H^3(G)^\vee(1) \rt H^2(\ov{G})^\vee \rt Y^\vee \rt 0.
\]
By \ref{ing} we get an exact sequence
\[
0 \rt G_{\ov{\eH}}(\omega_B)  \rt H^2(\ov{G})^\vee \rt Y^\vee \rt 0.
\]
We note that $\depth H^2(\ov{G})^\vee \geq 2$ and $H^1(G_{\ov{\eH}} (\omega_B))_n = 0$ for $n < 0$.  Also $Y$ has finite length. The long exact sequence in cohomology implies  $Y^\vee_n = 0$ for $n < 0$.
It follows that $Y_n = 0$ for $n > 0$. As $Y_n = 0$ for $n \leq 0$ we get $Y = 0$. So $H^2(G)/zt H^2(G) = 0$. By graded Nakayama's lemma we get $H^2(G) = 0$. As discussed before this implies that $H^1(L^\eF(A)) = 0$.
\end{proof}

\end{document}